\numberwithin{equation}{section}
\theoremstyle{plain}
\newtheorem{theorem}{Theorem}[section]
\newtheorem{proposition}[theorem]{Proposition}
\newtheorem{cor}[theorem]{Corollary}
\newtheorem{lemma}[theorem]{Lemma}
\theoremstyle{definition}
\newtheorem{example}{Example}[section]
\theoremstyle{definition} 
\newtheorem*{remark}{Remark}
\newtheorem*{remarks}{Remarks}
\newcommand*{\defeq}{\mathrel{\vcenter{\baselineskip0.5ex \lineskiplimit0pt
                     \hbox{\scriptsize.}\hbox{\scriptsize.}}}%
                     =}
 \newtheoremstyle{claimstyle}%
   {}
   {}
   {\normalfont}
   {}
   {\itshape}
   {.}
   { }
   {\thmnote{#3}}
\theoremstyle{claimstyle}
\newtheorem*{varclaim}{}
\newenvironment{claim}[1][Claim]{\begin{varclaim}[#1]}{\end{varclaim}}
\newenvironment{subproof}{\begin{proof}}{%
               \end{proof}}
\newcommand{\C}{{\mathbb{C}}}
\newcommand{\B}{\mathcal B}
\newcommand{\D}{{\mathbb{D}}}
\newcommand{\HH}{{\mathbb{H}}}
\newcommand{\R}{{\mathbb{R}}}
\newcommand{\Z}{{\mathbb{Z}}}
\newcommand{\N}{{\mathbb{N}}}
\renewcommand{\Re}{\operatorname{Re}}
\renewcommand{\Im}{\operatorname{Im}}
\newcommand{\im}{\operatorname{Im}}
\newcommand{\dist}{\operatorname{dist}}
\newcommand{\tef}{transcendental entire function}
\newcommand{\qfor}{\quad\text{for }}
\begin{document}
\title[Fatou's associates]{Fatou's associates}

\dedicatory{For Misha Lyubich on the occasion of his 60th birthday}

\author[V.~Evdoridou]{Vasiliki Evdoridou}
\address{School of Mathematics and Statistics\\ The Open University\\
Milton Keynes MK7 6AA\\ UK \textsc{\newline\indent\href{https://orcid.org/0000-0002-5409-2663}{\includegraphics[width=1em,height=1em]{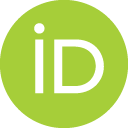} {\normalfont https://orcid.org/0000-0002-5409-2663}}}}
\email{vasiliki.evdoridou@open.ac.uk}

	 \author[L.~Rempe]{Lasse Rempe}
\address{Dept. of Mathematical Sciences \\
	 University of Liverpool \\
   Liverpool L69 7ZL\\
   UK 
	 \textsc{\newline\indent\href{https://orcid.org/0000-0001-8032-8580}{\includegraphics[width=1em,height=1em]{orcid2.png} {\normalfont https://orcid.org/0000-0001-8032-8580}}}}
\email{l.rempe@liverpool.ac.uk}
	 
\author[D.~J.~Sixsmith]{David J.~Sixsmith}
\address{Department of Mathematical Sciences \\
	 University of Liverpool \\
   Liverpool L69 7ZL
   UK \newline \indent \href{https://orcid.org/0000-0002-3543-6969}{\includegraphics[width=1em,height=1em]{orcid2.png} {\normalfont https://orcid.org/0000-0002-3543-6969}}} 
\email{david.sixsmith@open.ac.uk}
\thanks{The first author was supported by Engineering and Physical Sciences Research Council grant EP/R010560/1}
\subjclass[2020]{Primary 37F10; Secondary 30D05, 30J05}
\keywords{holomorphic dynamics, inner functions}

\begin{abstract}
Suppose that $f$ is a transcendental entire function,  $V \subsetneq \C$ is a simply-connected domain, and $U$ is a connected component of $f^{-1}(V)$. Using Riemann maps, we associate the map $f \colon U \to V$ to an inner function $g \colon \D \to \D$. It is straightforward to see that $g$ is either a finite Blaschke product, or, with an appropriate normalisation, can be taken to be an infinite Blaschke product. 

We show that when the singular values of $f$ in $V$ lie away from the boundary, there is a strong relationship between singularities of $g$ and accesses to infinity in $U$. In the case where $U$ is a forward-invariant Fatou component of $f$, this leads to a very significant generalisation of earlier results on the number of singularities of the map $g$. 

If $U$ is a forward-invariant Fatou component of $f$ there are currently very few examples where the relationship between the pair $(f, U)$ and the function $g$ has been calculated. We study this relationship for several well-known families of transcendental entire functions. 

It is also natural to ask which finite Blaschke products can arise in this manner, and we show the following: For every finite Blaschke product $g$ whose Julia
set coincides with the unit circle, 
there exists a transcendental entire function $f$ with an invariant Fatou
 component  such that $g$ is associated to $f$ in the above sense. Furthermore,
  there exists a single transcendental entire function $f$ with the property that any 
  finite Blaschke product can be arbitrarily closely approximated by an inner 
  function associated to the restriction of $f$ to a wandering domain. 
\end{abstract}
\maketitle
\section{Introduction}
Although much of this paper concerns dynamics, we begin with the following simple and very general non-dynamical construction.
Suppose that $f$ is a {\tef}, that $V \subsetneq \C$ is a simply-connected domain, and that $U$ is a connected component of $f^{-1}(V)$; it is a consequence of the Open Mapping Theorem that $U$ is also simply connected. Let  $\phi \colon \D \to U$ and $\psi \colon \D \to V$ be Riemann maps, and set $g \defeq \psi^{-1} \circ f \circ \phi$; see Figure~\ref{fig:inner}. We begin with a  result that summarises the properties of the map $g$. This is not entirely new but to the best of our knowledge it has not been stated in this generality before. Here an \emph{inner function} is a holomorphic self-map of $\D$ for which radial limits exist at almost all points of the unit circle, and belong to the unit circle. A particular class of inner functions is the class of \emph{Blaschke products}\footnote{Sometimes, the term ``Blaschke product'' is used more generally for a function of the form~\eqref{eq:Bdef} where some $a_n$ may also have $\lvert a_n\rvert> 1$, so that $B$ has poles in $\D$. These are not inner functions, and are not considered in this paper.}. These are functions of the form
\begin{equation}
\label{eq:Bdef}
B(z) \defeq e^{i\theta} \prod_{n = 1}^d \frac{|a_n|}{a_n} \frac{a_n-z}{1-\overline{a_n}z},
\end{equation}
where $\theta \in \R$, $d \in \N \cup \{\infty\}$, and $(a_n)_{1 \leq n \leq d}$ is a sequence of points of $\D$, which satisfies the condition $\sum (1-|a_n|) < \infty$. When $a_n = 0$ we interpret the term in the infinite product simply as $z$. If $d$ is finite then $B$ is called a \emph{finite Blaschke product of degree $d$}, and otherwise it is an \emph{infinite Blaschke product}.

\begin{proposition}
\label{prop:basics}
Suppose that $f$ is a {\tef}, that $V \subsetneq \C$ is a simply-connected domain, and that $U$ is a connected component of $f^{-1}(V)$. Let $\phi \colon \D \to U$ and $\psi \colon \D \to V$ be conformal, and set $g \defeq \psi^{-1} \circ f \circ \phi$. Then $g$ is an inner function, which, for an appropriate choice of $\phi$ and $\psi$, can be taken to be a Blaschke product. More precisely, exactly one of the following conditions holds.
\begin{enumerate}[(a)]
\item \emph{Finite valence:} $g$ is a finite Blaschke product of degree $d$, for some $d \in \N$, and $f|_U$ is of constant finite valence $d$.\label{theo:a}
\item \emph{Infinite valence:} $g$ is an infinite Blaschke product, and $U \cap f^{-1}(z)$ is infinite for all $z \in V$ with at most one exception.\label{theo:b}
\end{enumerate}
\end{proposition}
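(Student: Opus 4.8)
The plan is to first establish that $g$ is an inner function, and then analyse its valence to separate the two cases.

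\medskip

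\emph{Step 1: $g$ is a holomorphic self-map of $\D$.} Since $\phi(\D) = U \subseteq f^{-1}(V)$, the composition $f \circ \phi$ maps $\D$ into $V$, and then $\psi^{-1} \circ f \circ \phi$ maps $\D$ into $\D$. All three maps are holomorphic, so $g$ is a holomorphic self-map of $\D$. Note that $g$ is non-constant because $f$ is non-constant and $\phi, \psi$ are conformal.

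\medskip

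\emph{Step 2: $g$ is an inner function.} The key point is that $f \colon U \to V$ is a proper map. This follows because $U$ is a \emph{connected component} of $f^{-1}(V)$: if $K \subseteq V$ is compact, then $f^{-1}(K) \cap U$ is closed in $U$; moreover, if a sequence $z_n \in f^{-1}(K) \cap U$ accumulated at a point $\zeta \in \partial U$, then by continuity $f(\zeta) \in \overline{K} \subseteq V$ (here one uses that $f$ extends continuously to $\partial U$ since $f$ is entire, and that $\overline{K}$ stays in $V$ as $V$ is open), so $\zeta$ lies in $f^{-1}(V)$; but then a neighbourhood of $\zeta$ lies in $f^{-1}(V)$, and since $\zeta \in \partial U$ this neighbourhood meets another component of $f^{-1}(V)$ or shows $\zeta \in U$, a contradiction with $\zeta \in \partial U$ in the component sense. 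Hence $f^{-1}(K) \cap U$ is compact, so $f|_U$ is proper. Properness of $f|_U \colon U \to V$ translates, via the conformal maps $\phi$ and $\psi$, into the statement that $g \colon \D \to \D$ is proper in the sense that $g(z) \to \partial \D$ as $z \to \partial \D$. A standard fact (e.g. via the Poisson representation, or via the Lindel\"of theorem) is that a holomorphic self-map of $\D$ whose modulus tends to $1$ at the boundary has radial limits of modulus $1$ at almost every boundary point; thus $g$ is inner. Actually one should be slightly careful: properness in the above sense is what one gets, and this does imply $g$ is inner.

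\medskip

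\emph{Step 3: reduction to a Blaschke product.} An inner function $g$ factors as $g = B \cdot S$, where $B$ is a Blaschke product (possibly finite, possibly infinite, possibly a unimodular constant) and $S$ is a singular inner function. One wants to argue that the singular part is trivial, after an appropriate choice of normalisation of $\phi$ and $\psi$. The idea: post-composing $\psi$ with a M\"obius automorphism of $\D$ and pre-composing $\phi$ with one changes $g$ by M\"obius automorphisms on both sides, which does not affect the singular part in an essential way, so this alone will not kill $S$. Instead, the correct statement is more subtle and is presumably where the work lies: one shows that because $f$ is entire (so that $\infty$ is the only ``essential'' obstruction and $f$ has no poles in the finite plane), the map $g$ omits at most countably many values and, more to the point, one can choose $\phi$ so that $0 \in U$ is \emph{not} an asymptotic value obstruction, forcing the singular measure to vanish. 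A cleaner route: by a theorem of Frostman, for every $\alpha \in \D$ outside a set of capacity zero, the function $(g - \alpha)/(1 - \overline{\alpha} g)$ is a Blaschke product; choosing such an $\alpha$ and post-composing $\psi^{-1}$ with the corresponding automorphism realises $g$ itself as a Blaschke product after re-choosing $\psi$. This is the main technical obstacle.

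\medskip

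\emph{Step 4: the dichotomy (a)/(b).} Now suppose $g$ is a Blaschke product. If $g$ is a \emph{finite} Blaschke product of degree $d$, then $g \colon \D \to \D$ is a proper map of degree $d$, hence every point of $\D$ has exactly $d$ preimages counted with multiplicity; transporting back through $\phi, \psi$, the map $f|_U \colon U \to V$ has constant valence $d$. This gives case~\eqref{theo:a}. If $g$ is an \emph{infinite} Blaschke product with zeros $(a_n)$, then for a given $w \in \D$ the solutions of $g(z) = w$ are the zeros of the inner function $(g-w)/(1 - \overline{w} g)$, which is again an infinite Blaschke product for every $w$ with at most one exception (the exception being a possible ``point mass at infinity'' phenomenon; concretely, $g - w$ can have only finitely many zeros for at most one value of $w$, by an argument with the Nevanlinna counting function or by noting that $g$ takes each value infinitely often except possibly one, analogous to Picard's theorem for Blaschke products). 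Pulling back through the conformal maps, $U \cap f^{-1}(z)$ is infinite for every $z \in V$ with at most one exception. This is case~\eqref{theo:b}. Finally, the two cases are mutually exclusive since finite and infinite valence cannot coincide, and at least one holds since $g$, being a Blaschke product, is either finite or infinite.

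\medskip

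\emph{Expected main difficulty.} I expect Step~3 — showing that the singular inner factor can be removed by a suitable choice of Riemann maps — to be the crux. The statement that $f$ is entire (rather than merely meromorphic or holomorphic on $U$) should be what rules out a nontrivial singular part, roughly because entire functions have no ``finite poles'' and the only boundary behaviour of $f$ on $\partial U$ that matters is escape to $\infty$; one must turn this into a statement that the Riemann map $\psi$ can be chosen so that the induced boundary measure of $g$ is purely Blaschke. Frostman's theorem is the natural tool. The rest of the argument is a fairly routine dictionary between proper maps of $\D$ and proper maps $f|_U \colon U \to V$.
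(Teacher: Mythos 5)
There is a genuine gap in Step~2: the claim that $f\colon U\to V$ is proper is false in exactly the case that matters, namely case~(b). If $f|_U$ has infinite valence, then a point $w\in V$ (hence any compact $K\ni w$) has infinitely many preimages in $U$, and these form a discrete, therefore unbounded, subset of $U$; so $f^{-1}(K)\cap U$ is not compact. Your compactness argument only excludes accumulation of $f^{-1}(K)\cap U$ at \emph{finite} boundary points of $U$; it ignores the possibility that the sequence $z_n$ tends to $\infty$, which is precisely what happens (note that $U$ must be unbounded in case~(b), since a bounded component of $f^{-1}(V)$ always maps onto $V$ properly with finite degree). Correspondingly, the transported statement ``$|g(z)|\to 1$ as $|z|\to 1$'' fails for every infinite Blaschke product, whose zeros accumulate on $\partial\D$; a concrete counterexample is $f(z)=\lambda e^z$ with a completely invariant attracting basin $U=V$, where the associated inner function is conjugate to an infinite Blaschke product. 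The conclusion of Step~2 is still true, but it requires a boundary-values argument rather than properness. The paper's route: if $g$ were not inner, Fatou's theorem would give a set $E\subset\partial\D$ of positive measure on which $g$ has radial limits of modulus strictly less than $1$ and on which $\phi$ also has finite radial limits; the image of this set would then be a subset of $\partial U\cap\C$ of positive harmonic measure mapped by $f$ into $V$, contradicting $f(\partial U\cap\C)\subset\partial V$ (which follows from $U$ being a full component of $f^{-1}(V)$, exactly as in your own boundary argument).

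Steps~3 and~4 are essentially the paper's proof once the hedging is stripped away. For Step~3, Frostman's theorem is indeed the tool, and only $\psi$ needs to change: replacing $\psi$ by $\psi\circ\omega_{\zeta}^{-1}$ turns $g$ into $\omega_{\zeta}\circ g$, which is a Blaschke product for every $\zeta$ outside a set of capacity zero; your initial worry that post-composition ``does not affect the singular part in an essential way'' is the opposite of the truth, and Frostman is precisely the statement that a generic post-composition destroys the singular factor. For Step~4, the paper does not argue on the Blaschke product directly but applies a valence theorem of Heins to $f\colon U\to V$ (every value is taken either exactly $n$ times, or infinitely often with at most one exception); since an infinite Blaschke product has infinitely many zeros, this yields case~(b). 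Your appeal to a ``Picard-type'' statement for infinite Blaschke products is essentially this same fact, but the ``at most one exception'' does not follow from the Blaschke factorisation alone and needs to be proved or cited.
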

\begin{figure}
\begin{center}
\def\svgwidth{.7\textwidth}

\end{center}
\caption{\label{fig:inner}Construction of an associated inner function. Here, $f(z)=\lambda e^z$ is an exponential function 
with real $\lambda<-e$, which has 
an attracting periodic orbit of period $2$. The domains $U$ and $V$ shown are the two periodic Fatou components of $f$.}
\end{figure}
In the setting of Proposition~\ref{prop:basics}, we say that $g$ is an \emph{inner function associated to} $f \lvert _{U}$. 
 Such inner functions have been mostly considered in a dynamical setting where $U=V$ and $\phi=\psi$, see below. However,
 they have also appeared in settings where $U\neq V$; see, for example, \cite[p.5]{Bishop}.
\begin{remark}
Note that Proposition~\ref{prop:basics} implies that there are many inner functions which cannot be associated to a transcendental entire function in the sense of this paper. For example, if $A$ is any closed subset of $\D$, of (logarithmic) capacity zero, then there is an inner function that omits all the points of $A$; see \cite{MR0481015}.
\end{remark}

In our first main result, which significantly generalises earlier results in a dynamical setting, we are interested in the singularities of the associated inner function; a point $\zeta \in \partial \mathbb{D}$ is called a \emph{singularity} of (an inner function) $g$, if $g$ cannot be extended holomorphically to any neighbourhood of $\zeta$ in $\mathbb{C}.$ For a transcendental entire function $f$, we denote by $S(f)$ the set of \emph{singular values} of $f$; in other words, the closure of the set of critical and finite asymptotic values of $f$. Our result is as follows.
\begin{theorem}
\label{theo:tracts}
Suppose that $f$ is a {\tef},  that $V \subsetneq \C$ is a simply-connected domain, and that 
$U$ is a component of $f^{-1}(V)$ such that $f\colon U\to V$ is of infinite valence.
Suppose that $S(f)\cap V$ is compact, and let $D$ be 
 a bounded Jordan domain with $S(f)\cap V\subset D$ and $\overline{D} \subset V$. 
 Then the singularities of an associated inner function $g$ 
  are in order-preserving one-to-one correspondence with the accesses to infinity in 
  $U \cap f^{-1}(D)$. In particular, the number of singularities of $g$ is equal to the number of components of $U \setminus f^{-1}(\overline{D})$. 
\end{theorem}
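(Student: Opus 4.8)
The plan is to study how the set $K\defeq\phi^{-1}(U\cap f^{-1}(\overline D))=g^{-1}(\overline{D'})$, where $D'\defeq\psi^{-1}(D)$, meets the unit circle, and to transfer this information to $U$ via the Riemann map $\phi$. After the harmless normalisation $\psi(0)\in D$ (replacing $\phi,\psi$ by post\-/pre\-compositions with automorphisms changes $g$, its singularity set and the accesses below compatibly), Proposition~\ref{prop:basics} tells us that $g$ is an infinite Blaschke product, so its singularity set $E(g)\subset\partial\D$ is the set of accumulation points on $\partial\D$ of its zeros. Since these zeros lie in $K$ (as $0\in D'$) and $g$ continues analytically across every point of $\partial\D\setminus E(g)$ (where $\lvert g\rvert\to 1$, hence $g$ stays away from the compactly contained set $\overline{D'}$), a short argument gives $E(g)=\overline K\cap\partial\D$; that is, $E(g)$ is exactly the set of points at which $K$ reaches the unit circle. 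Writing $W\defeq U\setminus f^{-1}(\overline D)=U\cap f^{-1}(V\setminus\overline D)$, so that $\phi^{-1}(W)=g^{-1}(\Omega)=\D\setminus K$ with $\Omega\defeq\D\setminus\overline{D'}$, the theorem will follow once we understand the component structure of $\D\setminus K$ and how its components, and the complementary set $K$, sit against $\partial\D$.

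Since $S(f)\cap V\subset D$, the map $f$ has no critical or finite asymptotic values over $V\setminus\overline D$, so $f$ restricts to a covering map onto $V\setminus\overline D$; consequently $f\colon W\to V\setminus\overline D$ (a union of sheets of this covering) and hence $g\colon\D\setminus K\to\Omega$ are covering maps. Both $V\setminus\overline D$ and $\Omega$ are topological annuli --- here one uses that $V$ is simply connected and $D$ a Jordan domain with $\overline D\subset V$, so $\overline{D'}$ is a closed topological disc compactly contained in $\D$ --- so each component $C$ of $\D\setminus K$ is a connected covering of an annulus, hence either a finite cyclic cover (again an annulus) or the universal cover (simply connected). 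The infinite\-valence hypothesis rules out the first possibility: extending $g\vert_C$ continuously to the boundary, the ``outer'' boundary circle of $C$ would map onto $\partial\D$ with finite degree, and since $g(\D)\subseteq\D$ this circle must lie in, hence equal, $\partial\D$; then $g$ continues analytically across all of $\partial\D$ and so is a \emph{finite} Blaschke product, contradicting Proposition~\ref{prop:basics}. Thus every component $C$ of $\D\setminus K$ is simply connected and $g\vert_C$ is the universal covering of $\Omega$. Realising $C$ conformally as the infinite strip that is the universal cover of $\Omega$, one reads off that $C$ meets $\partial\D$ in a single open arc $I_C$ --- the side of the strip that $g$ maps onto the outer boundary circle $\partial\D$ of $\Omega$ --- across which $g$ continues analytically, while the two endpoints of $I_C$, the two ends of the strip, are singularities of $g$. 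The arcs $I_C$ are pairwise disjoint with union $\partial\D\setminus E(g)$, so the components of $W$ correspond bijectively to the complementary arcs of $E(g)$ in $\partial\D$, and $E(g)$ separates consecutive arcs.

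It remains to match the points of $E(g)$ with accesses to infinity in $P\defeq U\cap f^{-1}(D)$. First I would show that $\phi\to\infty$ through $K$ at every $\zeta\in E(g)$: if $z_n\in K$ with $z_n\to\zeta$ had bounded images $w_n\defeq\phi(z_n)\in U\cap f^{-1}(\overline D)$, a subsequential limit $w_*$ would lie in $U$ --- impossible, since then $z_n\to\phi^{-1}(w_*)\in\D$ --- or in $\partial U$, forcing $f(w_*)\in\partial V$ by the standard fact $\partial U\cap\C\subseteq f^{-1}(\partial V)$ --- impossible since $f(w_*)\in\overline D\subset V$. Using the strip structure of the components of $\D\setminus K$ from the previous step, one then identifies, near each $\zeta\in E(g)$, a channel of $K$ (equivalently of $P$) running off to infinity; this yields an access to infinity in $P$, and conversely every such access is captured in $\phi^{-1}$ by a point of $E(g)$. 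All the objects involved --- singularities, channels, and components of $W$ --- are indexed by points of and arcs in $\partial\D$ (equivalently by prime ends of $U$) and inherit the cyclic order there, so the resulting correspondence between singularities of $g$ and accesses to infinity in $U\cap f^{-1}(D)$ is order\-preserving; and because the arcs $I_C$ interleave with the points of $E(g)$ around $\partial\D$, the number of singularities of $g$ equals the number of complementary arcs, i.e.\ the number of components of $W=U\setminus f^{-1}(\overline D)$.

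The covering\-space and normality arguments, the analytic continuation of inner functions across non\-singular arcs, and the classification of annulus coverings are all routine. The main obstacle is the third paragraph: pinning down the precise meaning of ``access to infinity'' (in terms of prime ends), controlling the boundary behaviour of the Riemann map $\phi$ well enough to see that it genuinely runs to infinity along $K$ in exactly one channel at each $\zeta\in E(g)$, and ruling out pathological ways in which $K$ and the components of $\D\setminus K$ could accumulate on $\partial\D$ --- so as to guarantee that the correspondence is an honest order\-preserving bijection and that the interleaving, hence the counting statement, holds exactly. I expect this to require a careful local analysis near each point of $E(g)$, combining the strip structure established above with standard boundary\-behaviour results for inner functions and for conformal maps.
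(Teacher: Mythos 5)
Your outline follows essentially the same route as the paper's proof: decompose $\D$ using the $g$-preimage of the disc $\psi^{-1}(D)$, observe that each complementary component is mapped by $g$ as a universal covering of the annulus $\D\setminus\overline{D'}$ (hence is conformally a strip meeting $S^1$ in an arc $I_C$ across which $g$ continues analytically, with singularities at the two ends), and transfer to accesses to infinity via prime ends. The parts you work out are correct, and your identification of the singularity set as $\overline{K}\cap S^1$ via the zeros of the Blaschke product is a clean shortcut: the paper instead deduces that every point of $\Theta'$ (the set corresponding to accesses in $f^{-1}(D)\cap U$) is a singularity from the density of the arc endpoints $a(T),b(T)$ in $\Theta'$, which in turn needs the F.\ and M.\ Riesz theorem to know that the radial-limit-$\infty$ set of $\phi$ has measure zero and is therefore totally disconnected.

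However, your third paragraph --- which you correctly flag as the main obstacle --- is a genuine gap rather than a routine verification, and the idea that closes it is absent from your sketch. What must be excluded is that $K$ accumulates at some $\zeta\in E(g)$ through infinitely many distinct ``fingers'', or in a way that yields no access, or several accesses: in any of these cases the correspondence would fail to be a bijection and the interleaving, hence the count, would fail. The paper's resolution (Claim~3 of its proof) is to show that $g^{-1}(\psi^{-1}(D))$ is a \emph{Jordan domain}: one defines a circle map $\rho$ collapsing each arc $I(T)$ onto the corresponding tract-boundary curve $\gamma(T)$ and proves its continuity by a cross-cut argument --- by Carath\'eodory's prime end theory one may choose a cross-cut $C$ of $\D$ near $\zeta$ whose image $\phi(C)$ is a \emph{bounded} cross-cut of $U$ with finite endpoints, and since the boundary of $f^{-1}(D)\cap U$ is locally an arc, only finitely many tract boundaries meet $\phi(C)$; hence all but finitely many of the curves $\gamma(T)$ near $\zeta$ are trapped inside an arbitrarily small disc about $\zeta$. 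This local finiteness is the concrete ingredient your ``careful local analysis'' needs; once it is in place, every point of $E(g)$ is an accessible boundary point of $g^{-1}(\psi^{-1}(D))$ carrying exactly one access to infinity, the arcs $I_C$ interleave exactly with $E(g)$, and the counting statement follows. Without it, none of these conclusions is justified.
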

\begin{remark}\mbox{}
\begin{enumerate}[(a)]
\item An \emph{access to infinity} in $U\cap f^{-1}(D)$ is a homotopy class
  of curves to infinity in $U$; see Section~\ref{S.singularities}
\item By the final statement, we mean that the number of singularities and
  the number of components are either both infinite, or both finite and equal. 
  We caution that, when infinite, the number of singularities may be
  uncountable, while the number of components of 
  $U\setminus f^{-1}(\overline{D})$ is always countable. 
\item In the case of finite valence, it follows from Proposition~\ref{prop:basics} that any associated inner product is a finite Blaschke product, and has no singularities.
\end{enumerate}
\end{remark}

We now consider associated inner functions in a dynamical setting. Let $f$ be a transcendental entire function, and denote by $f^n$ the $n$th iterate of $f$. The set of points for which the set of iterates $\{f^n\}_{n \in \mathbb{N}}$ form a normal family in some neighbourhood is the \emph{Fatou set} $F(f)$, and its complement in the complex plane is the \emph{Julia set} $J(f)$. The Fatou set is open, and so consists of connected components which are called \emph{Fatou components}. For an introduction to the properties of these sets see, for example, \cite{bergweiler93}.

In the case that $U$ is a simply-connected Fatou component, and $V$ is the Fatou component containing $f(U)$, then the conditions we discussed earlier all hold, and we can associate an inner function to $f|_U$. 
A case of particular interest is when the Fatou component $U$ is \emph{forward invariant}, in other words such that $f(U) \subset U$. Note that it is well known that forward-invariant Fatou components are necessarily simply connected. In this case we have that $U = V$, we can set $\psi = \phi$, and the dynamics of $f$ on $U$ is \emph{conjugate} to the dynamics on $\D$ of the function $g \defeq \phi^{-1} \circ f \circ \phi$. Moreover, $g$ is unique in this respect, up to a conformal conjugacy. In this case we say that $g$ is an inner function \textit{dynamically associated to} $f|_U$. This construction appears to have first been considered
by T\"opfer~\cite[\S II]{topfer} in 1939. Its connection to inner functions appears to have first been
made by Kisaka~\cite{kisaka} and Baker and Dom\'inguez~\cite{BakerDominguez}. Compare also \cite{DevandG, fagella-henriksen, Baranski, BK,  univalentbd, Bargmann, EFJS}. 

Theorem~\ref{theo:tracts} has the following corollary, which is a significant generalisation of the main result of \cite{EFJS}. Here we consider the class $\B$ of {\tef}s for which $S(f)$ is bounded, and for a function $f \in \B$ a \emph{tract} is a component of $f^{-1}(\C \setminus \overline{D'})$ where $D'$ is a bounded Jordan domain containing $S(f)$. It is well-known that the number of tracts is independent of the choice of $D'$.
\begin{cor}
\label{corr:tracts}
Suppose that $f \in \B$, and that ${S(f)} \subset F(f)$. Suppose also that $U$ is an unbounded forward-invariant Fatou component of $f$. Then the number of singularities of a dynamically associated inner function is at most equal to the number of tracts of $f$.
\end{cor}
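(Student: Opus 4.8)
The plan is to deduce the corollary from Theorem~\ref{theo:tracts}, applied with $V\defeq U$ and with a single auxiliary Jordan domain that does double duty: it both counts the tracts of $f$ and plays the role of $D$ in Theorem~\ref{theo:tracts}. If $f|_U$ has finite valence there is nothing to prove, since by case~\eqref{theo:a} of Proposition~\ref{prop:basics} every dynamically associated inner function is then a finite Blaschke product, which has no singularities. So assume $f\colon U\to U$ has infinite valence. Since $f\in\B$, the set $S(f)$ is compact, and by hypothesis $S(f)\subseteq F(f)$; hence I may fix a bounded Jordan domain $D'$ with $S(f)\subseteq D'\subseteq\overline{D'}\subseteq F(f)$, and by definition the tracts of $f$ are the components of $f^{-1}(\C\setminus\overline{D'})$. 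The key preliminary observation is that in fact $\overline{D'}\subseteq U$. Indeed, if $S(f)\cap U$ were empty then $f|_U\colon U\to U$ would have no singular values in $U$ (its critical values lie in $S(f)\cap U$, and its finite asymptotic values lie in $S(f)$ or, along paths approaching $\partial U$, in $f(\partial U)\subseteq\partial U\subseteq J(f)$, so they avoid $U$), whence $f|_U$ would be a covering map of the simply connected domain $U$ onto itself, hence univalent — contradicting infinite valence. Therefore $S(f)\cap U\neq\emptyset$, and since $\overline{D'}$ is connected, contained in $F(f)$ and meets $U$, it lies in the Fatou component $U$. In particular $S(f)=S(f)\cap U$ is a compact subset of $U$, so the choice $D\defeq D'$ satisfies all the hypotheses of Theorem~\ref{theo:tracts}.

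Applying Theorem~\ref{theo:tracts} with $V=U$ and $D=D'$, the number of singularities of a dynamically associated inner function equals the number of components of
\[
 U\setminus f^{-1}(\overline{D'})\;=\;U\cap f^{-1}(\C\setminus\overline{D'})\;=\;\bigcup_{T}\,(U\cap T),
\]
the union being over the tracts $T$ of $f$ and disjoint because distinct tracts are disjoint. Hence it suffices to show that $U\cap T$ is connected (possibly empty) for every tract $T$: then the displayed set has exactly as many components as there are tracts meeting $U$, which is at most the total number of tracts, and the corollary follows.

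So fix a tract $T$. Because $S(f)\subseteq D'$, the map $f$ has no singular values in $\C\setminus\overline{D'}$, so $f\colon f^{-1}(\C\setminus\overline{D'})\to\C\setminus\overline{D'}$ is a covering map, and in particular so is its restriction $f|_T\colon T\to\C\setminus\overline{D'}$ to the component $T$. Pulling this covering back along the inclusion $U\setminus\overline{D'}\hookrightarrow\C\setminus\overline{D'}$ of open sets yields a covering $f\colon T\cap f^{-1}(U)\to U\setminus\overline{D'}$ (using $f(U)\subseteq U$ to identify the total space). Now $\overline{D'}$ is a compact subset of the domain $U$, so there is a Jordan curve in $U\setminus\overline{D'}$ winding once around $\overline{D'}$; such a curve generates $\pi_1(\C\setminus\overline{D'})\cong\Z$, so the inclusion $U\setminus\overline{D'}\hookrightarrow\C\setminus\overline{D'}$ is surjective on fundamental groups, and therefore the pulled-back covering of the connected base has connected total space $T\cap f^{-1}(U)$. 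Finally, $U\cap T$ is open in $T\cap f^{-1}(U)$ and also closed there: any point of $T\cap f^{-1}(U)$ that is a limit of points of $U$ lies in $F(f)\cap\overline U=U$, since $T\cap f^{-1}(U)\subseteq F(f)$ while $\partial U\subseteq J(f)$. Hence $U\cap T$ is empty or equals the connected set $T\cap f^{-1}(U)$; in either case it is connected, which is what was needed.

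The step I expect to require the most care is the component count for the pulled-back covering in the third paragraph, and in particular the fact that $\overline{D'}$ is compactly contained in $U$; this is the one place where the hypothesis $S(f)\subseteq F(f)$ (via the infinite-valence dichotomy and the covering characterization of $f|_U$) is genuinely used. Everything else — the reduction to infinite valence and the verification that $D=D'$ is admissible in Theorem~\ref{theo:tracts} — is routine.
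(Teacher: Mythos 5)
There is a genuine gap at the very first step. You assert the existence of a bounded Jordan domain $D'$ with $S(f)\subseteq D'\subseteq \overline{D'}\subseteq F(f)$, and from it you deduce $\overline{D'}\subseteq U$, i.e.\ that \emph{all} of $S(f)$ lies in $U$. But a Jordan domain is connected, so such a $D'$ exists only if $S(f)$ is contained in a single Fatou component, and the hypothesis $S(f)\subset F(f)$ does not give this: $S(f)$ may meet several components of $F(f)$, with only $S(f)\cap U\neq\emptyset$ guaranteed in the infinite-valence case (by your own covering argument). When $S(f)\not\subset U$, no single domain can do the double duty you ask of $D'$: the domain $D$ of Theorem~\ref{theo:tracts} must satisfy $\overline{D}\subset V=U$ and need only contain $S(f)\cap U$, whereas the domain defining the tracts must contain \emph{all} of $S(f)$ and then cannot have closure in $U$. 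Your third paragraph uses both properties of the same set $\overline{D'}$ simultaneously~--- you need $S(f)\subset \overline{D'}$ so that $f|_T\colon T\to\C\setminus\overline{D'}$ is a covering, and you need $\overline{D'}$ compactly contained in $U$ to produce the loop in $U\setminus\overline{D'}$ generating $\pi_1(\C\setminus\overline{D'})$~--- so the argument collapses precisely in the case where $S(f)$ is not confined to $U$, which the corollary is meant to include.

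That said, the remainder of the argument is correct and, under the additional assumption $S(f)\subset U$, actually proves the sharper statement that the number of singularities \emph{equals} the number of tracts meeting $U$: the pullback covering $f\colon T\cap f^{-1}(U)\to U\setminus\overline{D'}$ is connected by $\pi_1$-surjectivity, and $U\cap T$ is clopen in it, hence connected. To handle the general case one must decouple the two domains, as the paper does: choose $D$ with $S(f)\cap U\subset D\subset\overline{D}\subset U$ to apply Theorem~\ref{theo:tracts}, and a larger bounded Jordan domain $D'\supset D$ containing all of $S(f)$ (with no requirement that $\overline{D'}\subset F(f)$) to define the tracts; the comparison between components of $U\setminus f^{-1}(\overline{D})$ and tracts is then made by a purely topological counting argument rather than by the covering-space argument, which is no longer available since $\C\setminus\overline{D}$ may contain singular values.
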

This generalises \cite[Theorem 1.5]{EFJS}, in which the condition that $S(f)$ is a compact subset of the Fatou set was replaced by the condition that the \emph{postsingular set} defined by 
\[
\mathcal{P}(f) \defeq \overline{\bigcup_{j \geq 0} f^j(S(f))},
\]
is a compact subset of the Fatou set; such functions are called \emph{hyperbolic}. 

Lyubich (personal communication) has asked which inner functions $g$ arise as dynamically associated inner functions.  Few authors have explicitly calculated $g$ for given functions
 $f$. Indeed, we are aware of only three examples in the literature. First, T\"opfer \cite[{\S}V]{topfer} 
  considered the function $f(z)=\sin(z)$, 
  which has a triple fixed point at the origin, with two attracting directions, along the positive and negative real axis. Note that,
  by symmetry, the same inner function $g$ can be dynamically associated to either immediate parabolic basin. 
  T\"opfer observes~\cite[p.~78]{topfer}
  that $g$ can be taken to have the form
     \begin{equation}\label{eqn:topfer} g(z) = \frac{z^2 + k}{kz^2+1} \end{equation}
  for some $0<k<1$. He does not determine the correct value of $k$, which is $k=1/3$ since the function $g$ must have
  a parabolic point at $z=1$ (see the remarks at the end of Section~\ref{sec:expo}). 
  
  Devaney and Goldberg \cite{DevandG} considered the Julia set of $f_\lambda(z) \defeq \lambda e^z$ for values of $\lambda$ such that $f$ has a completely invariant attracting basin $U$. They showed that these functions have a dynamically associated inner function of the form
\begin{equation}
\label{eq:DevandG}
g(z) = g_{\mu}(z) \defeq \exp\left(i \ \frac{\mu + \overline{\mu}z}{1+z}\right),
\end{equation}
where $\mu$ lies in the upper half-plane $\HH$, and depends on $\lambda$. Note that $g$ is not an infinite Blaschke product -- indeed, the proof technique used in \cite{DevandG} depends on this fact -- but is conjugate to one. However,
Devaney and Goldberg did not determine which values of $\mu$ are realised.

The result of Devaney and Goldberg was generalised later by Schleicher. He considered the case that $f_\lambda$ has an attracting \emph{periodic} point; in this case $f_{\lambda}$ is \emph{hyperbolic}. He observes that the associated 
inner functions can always be chosen to take a certain form, which is equivalent to~\eqref{eq:DevandG}; see \cite[Lemmas III 4.2, III 4.3]{Dierkhab} for details. 

Finally, Baker and Dom\'inguez \cite{BakerDominguez} showed that the map $f(z) \defeq z + e^{-z}$ has an invariant Fatou component with the dynamically associated Blaschke product
\begin{equation}\label{eqn:parabolicblaschke}
g(z) \defeq \frac{3z^2 + 1}{3 + z^2},
\end{equation}
which is the same as T\"opfer's map~\eqref{eqn:topfer}. It is easy to see how to write $g$ in the form \eqref{eq:Bdef}. 

In view of this, perhaps unexpected, dearth of specific examples, our next goal in this paper is to find classes of {\tef}s, $\mathcal{F}$, and classes of inner functions, $\mathcal{G}$, with the following properties;
\begin{enumerate}[(I)]
\item Each $f \in \mathcal{F}$ has a forward-invariant Fatou component $U$;\label{p1}
\item For each $f \in \mathcal{F}$ there is an inner function $g\in\mathcal{G}$ dynamically associated to~$f$;\label{p2}
\item For each $g \in \mathcal{G}$ there is an $f \in \mathcal{F}$ such that $g$ is dynamically associated to~$f$.\label{p3}
\end{enumerate}

We begin with a result of this form for finite Blaschke products. If such a Blaschke product 
    $g\colon\D\to\D$ is dynamically associated to an invariant attracting or parabolic Fatou component
    of an entire function, then $g$ has either an attracting fixed point in 
    $\D$, or a triple fixed point on $S^1=\partial \D$. Equivalently,
      $J(g)=S^1$; see~\cite{fletcherblaschke}. 
    
 \begin{theorem}\label{theo:finiteblaschke}
   Let $\mathcal{F}$ consist of all entire functions having a forward-invariant 
   attracting or parabolic Fatou component
      $U$ that is a bounded Jordan domain. Let 
      $\mathcal{G}$ be the connectedness locus of finite Blaschke products;
       that is, $\mathcal{G}$ consists of all finite Blaschke products of degree $d\geq 2$
        with $J(g)= S^1$. 
     Then~\ref{p1}, \ref{p2} and \ref{p3} hold. 
  \end{theorem}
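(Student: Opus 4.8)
The plan is the following. Condition~\ref{p1} is immediate from the definition of $\mathcal{F}$, so the work lies in~\ref{p2} and~\ref{p3}. For~\ref{p2}, let $f\in\mathcal{F}$ with forward-invariant Fatou component $U$, a bounded Jordan domain, and let $g$ be a dynamically associated inner function. Since $\partial U\subseteq J(f)$ and $J(f)$ is completely invariant, $f(\partial U)\subseteq J(f)\cap\overline{U}=\partial U$; hence $f|_U\colon U\to U$ is a proper map, of some finite degree $d\in\N$, and the finite-valence alternative of Proposition~\ref{prop:basics} shows that $g$ is a finite Blaschke product of degree~$d$. As $U$ is an attracting or parabolic component, the discussion preceding the theorem (and \cite{fletcherblaschke}) shows that $g$ has either an attracting fixed point in $\D$ or a triple fixed point on $S^1$, equivalently $J(g)=S^1$, and in particular $d\geq 2$. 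Thus $g\in\mathcal{G}$.

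The substance of the theorem is~\ref{p3}, which I would establish by quasiconformal surgery. Fix $g\in\mathcal{G}$; being a finite Blaschke product, $g$ is holomorphic on a neighbourhood of $\overline{\D}$, with $g(\overline{\D})=g^{-1}(\overline{\D})=\overline{\D}$. The goal is to construct a quasiregular map $G\colon\C\to\C$ such that (i)~$G=g$ on a neighbourhood of $\overline{\D}$; (ii)~$G$ is transcendental, i.e.\ has an essential singularity at $\infty$; and (iii)~the complex dilatation of $G$ is supported on a set $E\subseteq\C\setminus\overline{\D}$ that the forward $G$-orbit of every point meets only a uniformly bounded number of times. A natural design is to take $G$ to equal $g$ near $\overline{\D}$, a fixed transcendental holomorphic model near $\infty$ (built from $\exp$), and a quasiregular interpolation carrying all the dilatation on an intermediate annulus; property~(iii) should be arranged using that $g$ strictly expands radii near $S^1$ (by the Hopf lemma, since $\log|g|$ is harmonic, vanishes on $S^1$ and is negative in $\D$), so that orbits leaving a neighbourhood of $S^1$ drift outward and pass through $E$ into a forward-invariant region --- a second attracting basin or a Baker domain of $G$ --- on which $G$ is holomorphic.

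Granting such a $G$, one spreads $\mu_G$ along backward $G$-orbits to obtain a $G$-invariant Beltrami coefficient $\mu$; by~(iii) the dilatation is compounded only boundedly often, so $\|\mu\|_\infty<1$, and by~(i) together with the complete invariance of $\overline{\D}$ under $g$, $\mu$ vanishes on $\D$. The measurable Riemann mapping theorem yields a quasiconformal $\varphi\colon\C\to\C$ fixing $\infty$, conformal on $\D$, with $\mu_\varphi=\mu$; then $f\defeq\varphi\circ G\circ\varphi^{-1}$ is entire, and transcendental since $\infty$ remains an essential singularity. Put $U\defeq\varphi(\D)$, a bounded Jordan domain with $f(U)=U$. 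Since $f^n(U)\subseteq U$ and $U$ is bounded, $\{f^n|_U\}$ is uniformly bounded, so $U\subseteq F(f)$; and $\partial U=\varphi(S^1)\subseteq J(f)$, because the repelling periodic points of $g$ on $S^1=J(g)$ are dense there, remain repelling periodic points of $G$ (as $G=g$ near $S^1$), and the purely topological property ``every nearby orbit leaves a fixed neighbourhood'' transports through the homeomorphism $\varphi$, making each $\varphi(\zeta)$ a repelling periodic point of $f$. Hence $U$ is a forward-invariant Fatou component of $f$; it is attracting when $g$ has an attracting fixed point $q\in\D$ (then $\varphi$ is conformal near $q$, so $f'(\varphi(q))=g'(q)$), and parabolic when $g$ has a parabolic fixed point on $S^1$ (then every orbit in $U$ converges to a finite fixed point of $f$ in $\partial U\subseteq J(f)$, which forces that fixed point to be parabolic). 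So $f\in\mathcal{F}$. Finally $\varphi|_\D\colon\D\to U$ is conformal and bijective, hence a Riemann map, and the dynamically associated inner function is $(\varphi|_\D)^{-1}\circ f\circ(\varphi|_\D)=G|_\D=g$, as required.

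I expect the construction of $G$ to be the one genuinely delicate point. It must interpolate between the finite degree-$d$ behaviour of $g$ on small circles around $\overline{\D}$ and the infinite-degree behaviour of a transcendental map near $\infty$ --- a transition that cannot be conformal and calls for a branched quasiregular interpolation, in the spirit of quasiconformal folding --- while keeping the dilatation supported on a set through which orbits pass only boundedly often, which is precisely what guarantees $\|\mu\|_\infty<1$. Everything downstream of the construction of $G$ is then routine.
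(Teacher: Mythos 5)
Your reduction of the theorem to~\ref{p3}, and your treatment of~\ref{p1} and~\ref{p2}, match the paper. For~\ref{p3} you also correctly identify quasiconformal surgery plus the measurable Riemann mapping theorem as the engine, and everything \emph{downstream} of your hypothetical map $G$ is essentially sound. The genuine gap is that $G$ itself is never constructed, and this is not a routine interpolation that can be deferred: you are asking for a quasiregular map of the plane that agrees with the degree-$d$ rational map $g$ near $\overline{\D}$, has an essential singularity at $\infty$, and whose dilatation is supported on a set $E$ that every forward orbit meets boundedly often. Passing from finite degree $d$ on the inner boundary of an annulus to infinite valence near $\infty$ forces infinitely many critical points into the interpolation region, and controlling their critical values~--- so that the resulting singular orbits do not return to $E$ infinitely often and do not create spurious dynamics~--- is precisely the content of Bishop-type quasiconformal folding constructions; it is the hardest part of the argument, not a technical afterthought. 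Your dynamical claim that orbits leaving a neighbourhood of $S^1$ ``drift outward and pass through $E$ into a forward-invariant region'' is also unsupported: the behaviour of $G$ outside the region where $G=g$ is governed entirely by the unconstructed interpolation, so condition~(iii) cannot be verified. As written, the proof of~\ref{p3} is a programme rather than a proof.

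The paper avoids this difficulty by inverting the roles of the two maps. Instead of extending $g$ outward to a transcendental map, it starts from an \emph{explicit} entire function $h$ ($\alpha_d$ in the attracting case, $\rho_d$ in the parabolic case, Proposition~\ref{prop:invariantfatouexistence}) that already possesses an invariant bounded Jordan Fatou component $W$ on which $h$ is proper of degree $d$, and then replaces the dynamics of $h$ \emph{inside} $W$ by a copy of $g$. The glue is a quasisymmetric \emph{conjugacy} $\theta$ on $S^1$ between $g$ and the inner function $\tilde g$ of $h|_W$, which exists because both Blaschke products lie in the same class $\mathcal{A}_d$ or $\mathcal{P}_d$ (Proposition~\ref{prop:conjugacy}; this is where the hypothesis $J(g)=S^1$ is genuinely used, via expansion in the elliptic case and the Lomonaco--Petersen--Shen result in the parabolic case). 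Because the surgery region $W$ is forward invariant and the modified map on $W$ is $\theta^{-1}$-conjugate to a holomorphic map, the pulled-back Beltrami coefficient is automatically invariant with uniformly bounded dilatation~--- exactly the control your condition~(iii) was meant to supply, obtained here for free. If you want to salvage your outward-extension route, you would essentially have to reprove a special case of quasiconformal folding; the paper's inward surgery is the intended, and much shorter, path.
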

  Observe that~\ref{p1} and~\ref{p2} hold by assumption and Proposition~\ref{prop:basics},
    so the main content of the theorem is showing the existence of
    an entire function realising a prescribed Blaschke product as its dynamically
    associated inner function. This will be achieved by quasiconformal surgery. 

Next, we give
 a complete description of dynamical inner functions for exponential maps with attracting fixed points, thus completing 
  the work of Devaney and Goldberg. 
 Similarly as in \cite{Dierkhab}, we find it convenient to change coordinates from the unit disc to the upper half-plane and consider \textit{inner functions of the upper half-plane} 
  associated to $f|_U$; compare also \cite{Bargmann}. We use the
   family of functions
   \begin{equation}\label{eqn:tangent} g_{a,b}\colon \HH\to\HH \defeq  a\tan(z) + b, \qquad a>0, b\in  (-\pi/2,\pi/2]. \end{equation}
  It is easy to check that $g_{a,b}$ is conjugate to $g_{\mu}$ as in~\eqref{eq:DevandG} for $\mu=2(b+ai)$. Thus the following result
   also gives a complete description of the set of $\mu$ for which $g_{\mu}$ 
   arises as an inner function of an exponential map. (Compare Figure~\ref{fig:tan_inner} in Section~\ref{sec:expo}.)
   \begin{theorem}\label{theo:exp}
  Set
     \[ \mathcal{F} \defeq \{f_{\lambda}\colon f_{\lambda} \text{ has 
                an attracting fixed point} \} =
                 \{ f_{\tau\cdot e^{-\tau}}\colon \tau\in\D\setminus\{0\} \}, \]
      where $f_{\lambda}(z)=\lambda e^z$. Also let 
    \begin{align} \label{eqn:expoG} \mathcal{G} \defeq& \{ g_{a,b}\colon g_{a,b}\text{ has an attracting fixed point in $\D$} \}  \\
        =& \left\{ g_{a,b}\colon a>1 \text{ or } \lvert b\rvert > \arccos(\sqrt{a}) - \sqrt{a}\cdot \sqrt{1-a} \right\}.\notag \end{align}
  Then $\mathcal{F}$ and $\mathcal{G}$ satisfy~\ref{p1}, \ref{p2} and \ref{p3}. More precisely, for every $f_{\lambda}\in\mathcal{F}$, the family
    $\mathcal{G}$ contains exactly one 
   dynamically associated inner function of $f$, and vice versa. 
\end{theorem}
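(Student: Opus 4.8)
The plan is to parametrise both $\mathcal F$ and $\mathcal G$ by the multiplier of the attracting fixed point: I will show each is in bijection with the punctured disc $\D\setminus\{0\}$, and that ``dynamically associated to'' transports one parametrisation to the other. First, if $f_\lambda(z)=\lambda e^z$ has an attracting fixed point $\tau$ then $\lambda e^\tau=\tau$ and $f_\lambda'(\tau)=\lambda e^\tau=\tau$, so $\lambda=\tau e^{-\tau}$ with $\tau\in\D\setminus\{0\}$ (here $\tau\neq 0$ since $f_\lambda$ is zero-free), and conversely every such $\tau$ gives $f_{\tau e^{-\tau}}\in\mathcal F$; this establishes the second description of $\mathcal F$. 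As $0$ is the only singular value of $f_\lambda$, there is at most one attracting cycle, so $\tau$ is unique and $f_\lambda\mapsto\tau$ is a bijection $\mathcal F\to\D\setminus\{0\}$. The immediate basin $U$ of $\tau$ is forward invariant and simply connected, which gives~\ref{p1}; moreover $0\in U$, $U$ is unbounded, and $f_\lambda\colon U\to U$ has infinite valence, so the associated inner function is governed by the infinite-valence case of Proposition~\ref{prop:basics}.

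For~\ref{p2}: by the work of Devaney--Goldberg and Schleicher, the inner function dynamically associated to $f_\lambda|_U$ is conformally conjugate to some $g_\mu$ as in~\eqref{eq:DevandG}, hence to some $g_{a,b}$. Since $\tau\in U$ is an interior fixed point, under the conjugacy it corresponds to an interior fixed point of that inner function with the same multiplier $\tau$; hence $g_{a,b}$ has an attracting fixed point in $\D$ with multiplier $\tau$, so $g_{a,b}\in\mathcal G$. Furthermore, the singularity set of $g_{a,b}$ on $\partial\HH$ is $\{\pi/2+k\pi:k\in\Z\}$, independent of $(a,b)$, and the automorphisms of $\HH$ preserving this set are exactly the translations $w\mapsto w+k\pi$, which send $g_{a,b}$ to $g_{a,b+k\pi}$; so distinct members of $\mathcal G$ (with $b\in(-\pi/2,\pi/2]$) are pairwise non-conjugate. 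Since a dynamically associated inner function is unique up to conformal conjugacy, each $f_\lambda$ has a unique dynamically associated inner function $\Phi(f_\lambda)\in\mathcal G$, and $\Phi(f_\lambda)$ has multiplier $\tau$; this proves~\ref{p2} and yields a multiplier-preserving injection $\Phi\colon\mathcal F\to\mathcal G$.

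To describe $\mathcal G$: by the Denjoy--Wolff theorem, $g_{a,b}$ (not an automorphism of $\HH$) has at most one fixed point in $\HH$, which is then automatically attracting; so $\mathcal G$ is exactly the set of $g_{a,b}$ whose Denjoy--Wolff point lies in $\HH$, and on $\mathcal G$ there is a well-defined multiplier map $\mathrm{mult}\colon\mathcal G\to\D\setminus\{0\}$. The fixed point reaches $\partial\HH=\R$ precisely when $g_{a,b}$ acquires a parabolic fixed point there, that is, $a\tan x+b=x$ and $a\sec^2 x=1$; eliminating $x$ gives $\cos^2 x=a\leq 1$, $\tan x=\pm\sqrt{(1-a)/a}$, and $b=\pm\bigl(\arccos\sqrt a-\sqrt a\,\sqrt{1-a}\bigr)$, and a routine (if slightly delicate) analysis of which side of this curve puts the fixed point into $\HH$ yields the second description of $\mathcal G$ in~\eqref{eqn:expoG}.

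Finally,~\ref{p3}: given $g_{a,b}\in\mathcal G$ with $\tau=\mathrm{mult}(g_{a,b})$, put $f\defeq f_{\tau e^{-\tau}}\in\mathcal F$, with immediate basin $U$ and fixed point $\tau$, and let $v_0=b+ia$ be the omitted singular value of $g_{a,b}$. Both maps are universal coverings onto their images: writing $g_{a,b}=M\circ E$ with $E(w)=e^{2iw}$ the universal covering $\HH\to\D\setminus\{0\}$ and $M(v)=b+ia\frac{1-v}{1+v}$ a conformal map $\D\to\HH$, one sees $g_{a,b}\colon\HH\to\HH\setminus\{v_0\}$ is a universal covering; and since $e^z$ is the universal covering $\C\to\C\setminus\{0\}$ and $U$ is a connected component of $f^{-1}(U)=f^{-1}(U\setminus\{0\})$, also $f\colon U\to U\setminus\{0\}$ is a universal covering. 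Linearising $g_{a,b}$ at its attracting fixed point and $f$ at $\tau$ (same multiplier $\tau$) produces a conformal conjugacy $\Psi$ on small discs, which one propagates over all of $\HH$ via the functional equation $\Psi=f^{-1}\circ\Psi\circ g_{a,b}$, taking the branch of $f^{-1}$ into $U$. Since $\HH$ is simply connected, the only obstruction is that $\Psi\circ g_{a,b}$ take the value $0$, which has no $f$-preimage; the delicate point is to rule this out --- equivalently, to show that $\Psi$ extends to a single-valued conformal isomorphism $\HH\to U$ with $0$ the image of $v_0$ and of $v_0$ only (a rigidity statement for such one-tract basins, cf.\ \cite{Dierkhab}). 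Granting this, $g_{a,b}$ is dynamically associated to $f|_U$, so $\Phi$ is onto, hence bijective; together with the uniqueness statements above (and the observation that $g$ determines $\tau$, hence the corresponding $f$), this gives exactly one dynamically associated inner function in $\mathcal G$ for each $f_\lambda\in\mathcal F$, and conversely. The crux of the whole argument is this last step: that the propagated conjugacy is globally single-valued and conformal, i.e.\ that the multiplier is a complete conformal invariant of these inner functions (the determination in the previous paragraph of which side of the parabolic curve constitutes $\mathcal G$ is also a little fiddly, but routine).
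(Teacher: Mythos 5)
Your architecture coincides with the paper's: parametrise both families by the multiplier of the attracting fixed point, show distinct $g_{a,b}$ are pairwise non-conjugate (the paper's Proposition~\ref{prop:G}), obtain existence from the universal-covering structure (Theorem~\ref{theo:unisingular}), and reduce surjectivity to the claim that an infinite-valence inner function with a single singular value and an attracting fixed point is determined up to M\"obius conjugacy by its multiplier. But that last claim is precisely where you write ``Granting this'', and it is the substantive content of the theorem~--- the paper isolates it as Lemma~\ref{lem:unisingular}~--- so your step for~\ref{p3} currently restates what must be proved rather than proving it. The paper's argument is worth absorbing because it dissolves exactly the obstruction you identify (the propagated branch of $f^{-1}$ hitting the omitted value): normalise the K{\oe}nigs linearising coordinates of $g$ and $\tilde g$ so that each sends its \emph{singular value} to $1$~--- a free normalisation, since the lineariser is unique up to scaling~--- so that the local conjugacy $\psi_0=\tilde\phi^{-1}\circ\phi$ on the linearisation domain $U_0$ automatically carries $s$ to $\tilde s$. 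Since $g\colon g^{-1}(U_0)\to U_0$ and $\tilde g\colon \tilde g^{-1}(\tilde U_0)\to\tilde U_0$ are universal coverings with singular behaviour only over $s$, resp.\ $\tilde s$ (see \cite[Proposition~2.8]{BFR}), the map $\psi_0$ lifts to $\psi_1$ on $g^{-1}(U_0)$; the identity theorem forces $\psi_1=\psi_0$ on $U_0$, so $\psi_1$ again matches singular values, and the construction iterates to exhaust $\D=\bigcup_n g^{-n}(U_0)$. Matching the singular values at the outset is the missing idea: with only the multipliers matched, the very first lift need not exist, and ``the multiplier is a complete conformal invariant'' is false without the unisingularity being exploited in this way.

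A smaller but genuine gap is in your description of $\mathcal G$: you take for granted that the Denjoy--Wolff point of $g_{a,b}$ is never $\infty$, so that the only alternative to an attracting fixed point in $\HH$ is an attracting or parabolic fixed point on $\R$. This is not automatic, and Proposition~\ref{prop:G} devotes a separate argument to it: using the $\pi$-periodicity of $g_{a,b}$, the fact that $g_{a,b}(z)\to b+ai\in\HH$ as $\im z\to+\infty$, and the estimate $\lvert\tan'(z)\rvert\geq C\lvert\tan(z)\rvert^2$ on horizontal strips, one shows that an orbit tending to $\infty$ would eventually have nondecreasing distance to $\R$, which is incompatible with $\im z_n\to 0$ unless the orbit lies in $\R$. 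Your ``routine analysis of which side of the parabolic curve'' needs this input before it can begin.
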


As in Schleicher's work, our theorem applies also to iterates of exponential maps. Indeed, more generally the following is true. 
\begin{theorem}\label{theo:unisingular}
  Let $\mathcal{G}$ be as above, 
     let $f$ be a transcendental entire function, and suppose that $U$ is a basin of attraction of period $n$ for $f$. If the cycle of $U$ contains only one singular value of $f$, and 
   $f^n\colon U\to U$ is of infinite valence, then the family $\mathcal{G}$ from~\eqref{eqn:expoG} contains exactly one 
    inner function dynamically associated to $f^n|_U$. 
\end{theorem}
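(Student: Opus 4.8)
The plan is to reduce Theorem~\ref{theo:unisingular} to Theorem~\ref{theo:exp}, or rather to the part of its proof that establishes~\ref{p2} for exponential maps, by observing that the hypotheses here are precisely what is needed to make that argument work. First I would replace $f$ by $F \defeq f^n$ and $U$ by the immediate basin component on which $F$ acts; since $U$ is a periodic basin of period $n$, it is simply connected and forward invariant under $F$, so we are in the setting of Proposition~\ref{prop:basics} with $V = U$ and $\psi = \phi$, and the hypothesis that $F\colon U\to U$ has infinite valence puts us in case~\ref{theo:b}, so a dynamically associated inner function $g\colon\D\to\D$ exists and is an infinite Blaschke product (up to the normalisation). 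Pass to the upper half-plane model $\HH$, where $g$ becomes an inner function $h\colon\HH\to\HH$ with an attracting fixed point (the image of the attracting periodic point, which lies in $U$ and hence in the interior).

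The key step is to pin down the shape of $h$ using the hypothesis that the cycle of $U$ contains exactly one singular value of $f$. For the map $F=f^n$, the singular set $S(F)$ is contained in $\bigcup_{j=0}^{n-1} f^j(S(f))$, and the part of it lying in $U$ is exactly the image under iteration of that single singular value in the cycle — so $S(F)\cap U$ is a single point (or, allowing for the postsingular orbit within $U$, the relevant finite data reduces to one singular value). I would then invoke Theorem~\ref{theo:tracts}: taking $D$ to be a small Jordan domain around $S(F)\cap U$ inside $U$, the singularities of $g$ are in order-preserving bijection with the accesses to infinity in $U\cap F^{-1}(D)$. Because there is a single singular value, $F\colon U\setminus F^{-1}(\overline D)\to U\setminus \overline D$ behaves like a logarithmic-type covering over an annular region, and one shows there is exactly one such access; hence $g$ has exactly one singularity. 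An infinite Blaschke product on $\HH$ with an attracting fixed point and a single singularity — which we may normalise to be at $\infty$ — and which is boundary-to-boundary a bijection away from that singularity is, after conjugation, forced to be of the form $z\mapsto a\tan(z)+b$: the single singularity at $\infty$ forces exponential-type boundary behaviour, and $\tan$ is the canonical such map. This is the content that Theorem~\ref{theo:exp}'s proof already establishes for $f_\lambda$, and the argument there only uses the single-singular-value and infinite-valence properties, so it transfers verbatim. Uniqueness within $\mathcal G$ follows because the conformal conjugacy class of $g$ is unique (Proposition~\ref{prop:basics} and the remarks following it), and $\mathcal G$ as described in~\eqref{eqn:expoG} meets each conjugacy class of inner functions with one singularity and an attracting fixed point exactly once.

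The main obstacle I expect is the normal-form step: showing that an infinite Blaschke product of $\HH$ with exactly one boundary singularity, unit boundary valence away from it, and an interior attracting fixed point must be conformally conjugate to some $g_{a,b}$ from~\eqref{eqn:tangent}. One has to rule out other inner functions with a single singularity (for instance one must use that $g$ arises from an entire $f$, excluding inner functions that omit values, cf.\ the Remark after Proposition~\ref{prop:basics}) and then match the two real parameters. The cleanest route is probably to use the universal-cover structure: lift $h$ through $z\mapsto e^{iz}$-type maps to identify it with a Möbius self-map of $\HH$ composed with $\tan$, exactly as in \cite{DevandG, Dierkhab}; that this lift exists and is affine is where the single-singularity hypothesis does its work. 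Everything else — simple connectivity of $U$, existence of $g$, the combinatorial count of accesses — is supplied by Proposition~\ref{prop:basics} and Theorem~\ref{theo:tracts}, so the proof is mostly a matter of citing Theorem~\ref{theo:exp}'s argument in the more general setting and checking that no step used more than the stated hypotheses.
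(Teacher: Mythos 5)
Your overall skeleton (existence of $g$ via Proposition~\ref{prop:basics}, an attracting fixed point in the interior, a normal-form step, uniqueness from rigidity of the family $g_{a,b}$) matches the paper, and the remark at the very end of your proposal~--- identify $g$ with $\tan$ via the universal-cover structure~--- is in fact essentially the whole of the paper's proof. But the route you actually develop has a genuine gap at the normal-form step. You try to pin down $g$ by counting its \emph{boundary} singularities via Theorem~\ref{theo:tracts}, and then claim that an inner function of $\HH$ with an attracting fixed point and a single boundary singularity must be conjugate to some $a\tan +b$. That classification is false: the paper's own Example~\ref{ex:zexp} (for $\lambda z e^z$) produces a dynamically associated inner function $e^{i\sigma}z\exp\bigl(c\frac{z-1}{z+1}\bigr)$ with exactly one boundary singularity and an attracting fixed point at $0$ which is not conjugate to any $g_{a,b}$ (it has a critical point in $\D$, hence two singular values); more generally, Theorem~\ref{theo:finite} with $p\geq 1$ and $q=1$ gives a whole family of such maps. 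The datum that does the work is not the number of boundary singularities but the number of singular \emph{values} of $g$ inside the half-plane: since the cycle of $U$ meets $S(f)$ in one point, $S(f^n)\cap U$ is a single point $s$, and then (by \cite[Proposition~2.8]{BFR}, using infinite valence to exclude the finite branched covering and the bijection cases) $f^n\colon U\to U\setminus\{s\}$ is a universal covering. Hence, after normalising the singular value to $i$, the map $g\colon\HH\to\HH\setminus\{i\}$ is a universal covering; so is $\tan$, so $g=\tan\circ M$ for a M\"obius automorphism $M$ of $\HH$, and a M\"obius conjugacy then puts $g$ in the form $g_{a,b}$. (Your subsidiary claim that $U\cap F^{-1}(D)$ has exactly one access to infinity is also only asserted, not proved; it is true, but again follows most easily from the covering structure, and in any case it would not suffice.)

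On uniqueness: your statement that $\mathcal{G}$ ``meets each conjugacy class of inner functions with one singularity and an attracting fixed point exactly once'' is too strong for the same reason~--- there are conjugacy classes with one boundary singularity and an attracting fixed point that $\mathcal{G}$ does not meet at all. What is actually needed, and what the paper uses, is only the first part of Proposition~\ref{prop:G}: no two distinct maps $g_{a,b}$ are conformally conjugate, so once $g$ is known to be conjugate to \emph{some} member of $\mathcal{G}$, that member is unique.
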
 

 While \cite{DevandG} and \cite[Section~III.4]{Dierkhab} only treated
   attracting dynamics, we can also consider parabolic basins, 
    thus completing the description of associated inner functions for periodic 
    Fatou components of exponential maps. 

 \begin{theorem}\label{theo:unisingularparabolic}
  Let $f$ be a transcendental entire function, and suppose that $U$ is a parabolic basin of period $n$ for $f$. If the cycle of $U$ contains only one singular value of $f$, and 
   $f^n\colon U\to U$ is of infinite valence, then $\tan\colon \HH\to\HH$ is a dynamically associated inner function for $f^n$ on $U$. 
\end{theorem}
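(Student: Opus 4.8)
The plan is to mimic the proof of Theorem~\ref{theo:exp} (or Theorem~\ref{theo:unisingular}), replacing the attracting analysis with the parabolic one. First I would reduce to the map $F \defeq f^n\colon U\to U$ on a single parabolic basin, so that $F$ fixes $U$, and let $g\colon\HH\to\HH$ be an inner function of the upper half-plane dynamically associated to $F|_U$, via a Riemann map $\phi\colon\HH\to U$. By Proposition~\ref{prop:basics}, since $F\colon U\to U$ has infinite valence, $g$ is (conjugate to) an infinite Blaschke product; in half-plane coordinates it is an inner function of $\HH$ with no finite valence. I would normalise $\phi$ so that the parabolic fixed point of $F$ on $\partial U$ corresponds to $\infty\in\partial\HH$, forcing $g$ to fix $\infty$ parabolically as well, with the Fatou coordinate of $F$ transported to a Fatou coordinate of $g$ at $\infty$.

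The key structural input is Theorem~\ref{theo:tracts}. The basin $U$ lies in $\B$-type dynamics only locally, but the hypothesis that the cycle of $U$ contains exactly one singular value of $f$ means that, after choosing a small Jordan domain $D$ with $\overline D\subset U$ containing the postsingular data relevant to $U$, the set $U\cap F^{-1}(\overline D)$ has complement with a single unbounded component, i.e.\ there is exactly one access to infinity in $U\cap F^{-1}(D)$. (Here one must be slightly careful: it is the single singular value in the cycle that produces a single tract; I would argue, as in the proof of Theorem~\ref{theo:unisingular}, that the preimage structure of $F=f^n$ over $D$ has precisely one unbounded complementary component because only one branch of $f$ contributes the asymptotic value.) By Theorem~\ref{theo:tracts}, $g$ therefore has exactly one singularity on $\partial\HH$; by the normalisation this singularity is at $\infty$, and away from it $g$ extends holomorphically across $\partial\HH$. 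Thus $g\colon\HH\to\HH$ is a parabolic inner function of the upper half-plane with a single singularity at $\infty$, fixing $\infty$.

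Now I would invoke the classification of such maps. A holomorphic self-map of $\HH$ that extends to a $2\pi$-periodic — after a further argument, see below — analytic map of $\R\cup\{\infty\}$ to $\R\cup\{\infty\}$, with its only singularity at $\infty$ and a parabolic fixed point there, is, up to real affine conjugacy $z\mapsto \alpha z+\beta$, the tangent map. Concretely: the derivative data forces $g$ to have the form $g(z)=a\tan(z)+b$ with $a>0$, $b\in(-\pi/2,\pi/2]$ (the family~\eqref{eqn:tangent}); and the requirement that the fixed point at $\infty$ be parabolic rather than attracting (equivalently that $g\notin\mathcal{G}$, or that $g$ have the appropriate Fatou-coordinate asymptotics) pins down $a=1$ and, by the remaining real affine freedom, $b=0$, giving $g=\tan$. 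I would fill in that the uniqueness of the dynamically associated inner function up to conformal conjugacy (as recorded after Proposition~\ref{prop:basics}, in the forward-invariant case) means any associated $g$ is conjugate to $\tan\colon\HH\to\HH$, which is the assertion.

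The main obstacle I expect is the second paragraph: pinning down precisely why ``one singular value in the cycle'' yields ``one access to infinity'' for the iterate $F=f^n$, and more subtly, why $g$ must fix the corresponding boundary point \emph{parabolically}. The first point requires a careful bookkeeping of how singular values of $f$ along the cycle pull back under the branches of $f^n$ over a small disc $D\subset U$; the Denjoy--Wolff-type behaviour of the inner function and the structure of the parabolic basin (a Leau--Fatou flower for $F$ inside $U$, with its petal accessing $\partial U$ at the parabolic point) must be matched up, via $\phi$, with the single access to infinity produced by Theorem~\ref{theo:tracts}. Once the boundary behaviour of $g$ is thus controlled — one singularity, located at the parabolic fixed point on $\partial\HH$ — the final identification with $\tan$ is a routine consequence of the normal forms already used for Theorem~\ref{theo:exp}.
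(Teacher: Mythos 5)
The central step of your argument fails because of an inconsistent normalisation. You place both the single boundary singularity of $g$ and its parabolic (Denjoy--Wolff) fixed point at $\infty\in\partial\HH$, and then invoke a classification of self-maps of $\HH$ ``with [their] only singularity at $\infty$ and a parabolic fixed point there''. No such map exists in the relevant class: for $\tan$ (and for every $g_{a,b}$) the singularity is at $\infty$ while the triple fixed point is at a \emph{finite} real point; indeed the proof of Proposition~\ref{prop:G} shows that $\infty$ is never the Denjoy--Wolff point of $g_{a,b}$, so the classification you appeal to is vacuous as stated. Relatedly, your route to the normal form starts from the wrong piece of data. The hypothesis to exploit is not ``one singularity on $\partial\HH$'' (which you extract, with admitted gaps, from Theorem~\ref{theo:tracts}) but ``one singular value $\alpha$ in $\HH$'': together with infinite valence this makes $g\colon\HH\to\HH\setminus\{\alpha\}$ a universal covering, hence equal to $\tan$ up to pre-composition with a M\"obius automorphism of $\HH$, so that after a suitable conjugacy $g=g_{a,b}$. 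This is exactly how the paper argues (as in Theorem~\ref{theo:unisingular}); it yields the periodicity and the location of the singularity for free and does not use Theorem~\ref{theo:tracts} at all.

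Even after repairing the normalisation, the decisive dynamical step is missing. Knowing that the Denjoy--Wolff point $\zeta_0\in\R$ of $g=g_{a,b}$ is ``parabolic'' does not pin down $(a,b)=(1,0)$: there is a whole curve of parameters, $\lvert b\rvert=\arccos(\sqrt{a})-\sqrt{a}\cdot\sqrt{1-a}$ with $a\leq 1$, for which $g_{a,b}$ has a multiple (double) fixed point on $\R$, and only $g_{1,0}=\tan$ has a \emph{triple} fixed point. One must therefore show that $g$ has zero hyperbolic step, i.e.\ $\dist_{\HH}(g^k(z),g^{k+1}(z))\to 0$; the paper obtains this by transporting the estimate $\dist_U(f^{kn}(z),f^{(k+1)n}(z))=O(1/k)$, which follows from the existence of an attracting petal on which $f^n$ is conformally conjugate to $z\mapsto z+1$. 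Your parenthetical ``appropriate Fatou-coordinate asymptotics'' gestures at this, but you list it only as an anticipated obstacle rather than carrying it out; without it the argument cannot distinguish $\tan$ from the other boundary-parabolic members of the family. (The final uniqueness, once the triple fixed point is established, is Lemma~\ref{lem:unisingularparabolic}, proved by lifting a Fatou-coordinate conjugacy between petals, rather than by a computation within the family $g_{a,b}$.)
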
 

 We are able to give a similar description of dynamically associated inner functions in cases where $f^n\colon U\to U$ takes some value only finitely many times, 
   see Theorem~\ref{theo:finite}. In particular, this applies to many functions of the form 
\[
f(z) \defeq \lambda P(z) e^{Q(z)};
\]
  see Corollary~\ref{cor:exponentials}.

The case of Fatou components containing infinitely many critical points is more complicated. We begin with the following detailed example, which concerns sine functions with invariant Fatou components of infinite valence. (Recall that the example $f(z)=\sin(z)$ studied
by T\"opfer has invariant Fatou components of finite valence.) 
\begin{theorem}
\label{theo.sine}
There is a homeomorphism $\psi \colon (0, 1) \to (1, \infty)$ with the following property. Let $\mathcal{F}$ be the family of {\tef}s
\begin{equation}
\label{sinfdef}
f_\lambda(z) \defeq \lambda \sin z, \qfor \lambda \in (0, 1).
\end{equation}
For $\tau > 1$ let
\[
a_n = a_n(\tau) \defeq \frac{\tau^n-1}{\tau^n+1}, \qfor n \in \N,
\]
and let $\mathcal{G}$ be the family of infinite Blaschke products
\begin{equation}
\label{singdef}
g_\tau(z) \defeq z \prod_{n=1}^\infty \frac{a_n^2 - z^2}{1-a_n^2z^2}.
\end{equation}
Then \ref{p1}, \ref{p2} and \ref{p3} hold for these families, with $\tau = \psi(\lambda)$.
\end{theorem}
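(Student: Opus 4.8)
The plan is to analyze the family $f_\lambda(z) = \lambda\sin z$ with $\lambda \in (0,1)$ directly, establishing all three properties. First I would identify the relevant Fatou component: since $\lambda \in (0,1)$, the origin is an attracting fixed point of $f_\lambda$ with multiplier $\lambda$, so there is an immediate attracting basin $U_\lambda$ containing $0$. Because $f_\lambda$ is odd, $U_\lambda$ is symmetric under $z \mapsto -z$; and because $f_\lambda$ is real on $\R$ and maps $\R$ into $(-\lambda,\lambda) \subset \R$, the real line lies in $U_\lambda$, so $U_\lambda$ is unbounded (and forward invariant, being the immediate basin of a real attracting fixed point). The critical points of $f_\lambda$ are the points $\pi/2 + k\pi$, with critical values $\pm\lambda$, both lying in $U_\lambda$ (indeed on the real segment $(-\lambda,\lambda)$), so $S(f_\lambda) = \{-\lambda,\lambda\} \subset U_\lambda$ is compact and contained in the Fatou set. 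Thus \ref{p1} holds, and by Proposition~\ref{prop:basics} each $f_\lambda|_{U_\lambda}$ has a dynamically associated inner function $g$; since $U_\lambda$ contains infinitely many critical points (all the $\pi/2 + k\pi$ with critical values in $(-\lambda,\lambda)$), $f_\lambda|_{U_\lambda}$ has infinite valence, so $g$ is an infinite Blaschke product by Proposition~\ref{prop:basics}(\ref{theo:b}).

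Next I would pin down the structure of the tracts, i.e. the accesses to infinity, using Theorem~\ref{theo:tracts}. Choose a small disc $D$ around the segment $[-\lambda,\lambda]$ with $\overline D \subset U_\lambda$. The components of $U_\lambda \setminus f_\lambda^{-1}(\overline D)$ are the ``tracts'' in the sense of that theorem. Because $f_\lambda(x+iy) = \lambda\sin(x+iy)$ grows like $\tfrac{\lambda}{2}e^{|y|}$ as $|y| \to \infty$ and is $2\pi$-periodic in $x$, the preimage $f_\lambda^{-1}(D)$ consists of infinitely many ``horizontal'' pieces accumulating at $+i\infty$ and $-i\infty$, and the complement in $U_\lambda$ has exactly two unbounded components — one running up to $+i\infty$ (near the imaginary axis, where $f_\lambda$ escapes) and one running down to $-i\infty$. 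I would argue that $U_\lambda$ has exactly two accesses to infinity: this should follow from the fact that $U_\lambda$, being the basin of a real attracting fixed point of a real odd entire function with only the two real critical values, is (via the symmetry and a standard covering/pullback argument) a ``horizontal-strip-like'' domain, so by Theorem~\ref{theo:tracts} the associated $g$ has exactly two singularities, located at symmetric points of $S^1$, which after normalisation we may take to be $\pm i$ (equivalently $\pm 1$ after rotating; the symmetry forces them to be antipodal). This gives that $g$ is an infinite Blaschke product with exactly two singularities and $z \mapsto -z$ symmetry, which is precisely the structural shape of the $g_\tau$ in \eqref{singdef} (note $g_\tau(-z) = -g_\tau(z)$, $g_\tau$ fixes $0$ with $g_\tau'(0) = \prod a_n^2 \in (0,1)$, and its zeros $0, \pm a_n$ accumulate only at $\pm 1$, with $a_n = \tanh(\tfrac{n}{2}\log\tau) \to 1$).

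Then comes the identification of the parameter: I would show that for each $\lambda \in (0,1)$ the associated $g$ is conformally conjugate to exactly one $g_\tau$, and that the correspondence $\lambda \mapsto \tau$ is a homeomorphism $(0,1) \to (1,\infty)$. The natural approach is to change coordinates to a strip. Consider the universal cover: $U_\lambda$ pulls back under a logarithm-type map to a model where $f_\lambda$ is conjugated to something explicit; alternatively, work with $\tan$. Observe that $g_\tau$ is built so that, under the conjugacy sending $\D$ to $\HH$ with $\pm 1 \mapsto 0,\infty$, it becomes (up to affine conjugacy) the map $w \mapsto \tau\cdot$ (something) — in fact the zeros $a_n = \frac{\tau^n - 1}{\tau^n+1}$ are exactly the $\D$-coordinates of the points $\tau^n$ under the Cayley transform, so in $\HH$-coordinates $g_\tau$ has simple zeros at $\{\tau^n : n \in \Z\} \cup\{0?\}$ and the functional equation $g_\tau$ in $\HH$-coordinates is semiconjugate to multiplication by $\tau$. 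I would compute $g_\tau'(0) = \prod_{n=1}^\infty a_n^2$ explicitly and match it against the multiplier $\lambda$: since $g$ dynamically associated to $f_\lambda|_{U_\lambda}$ must fix $0$ (the image of the attracting fixed point under $\phi^{-1}$) with the same multiplier $\lambda = f_\lambda'(0)$, we need $\prod_{n=1}^\infty \bigl(\tfrac{\tau^n-1}{\tau^n+1}\bigr)^2 = \lambda$. I would verify that $\tau \mapsto \prod_{n=1}^\infty \bigl(\tfrac{\tau^n-1}{\tau^n+1}\bigr)^2$ is a continuous strictly increasing bijection $(1,\infty) \to (0,1)$ (it tends to $0$ as $\tau \to 1^+$ since $a_1 \to 0$, and to $1$ as $\tau \to \infty$ since each factor $\to 1$), and take $\psi$ to be its inverse. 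Uniqueness of the associated inner function up to conformal conjugacy (from Proposition~\ref{prop:basics} and the discussion preceding Theorem~\ref{theo:finiteblaschke}), combined with the fact that two $g_\tau$ with different $\tau$ are not conformally conjugate (different multipliers at the fixed point), then gives \ref{p2} and \ref{p3} simultaneously, with $\tau = \psi(\lambda)$.

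The main obstacle I expect is the explicit identification of the associated inner function with $g_\tau$ — that is, proving not merely that $g$ has the right qualitative features (infinite Blaschke product, two antipodal singularities, odd symmetry, correct multiplier) but that these features determine it uniquely and that $g_\tau$ is the one. The clean way to do this is a rigidity/functional-equation argument: show that in the half-plane model the associated inner function must satisfy a semiconjugacy to $w \mapsto \tau w$ coming from the covering structure of $f_\lambda$ near $\pm i\infty$ (the two ends where $\sin$ escapes exponentially, giving a $\tau$-expanding behaviour with $\log\tau$ related to $\lambda$ via the linearisation), and that this semiconjugacy together with the boundary behaviour forces the Blaschke product to be exactly $g_\tau$. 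Controlling the boundary behaviour and the passage between the dynamical plane of $f_\lambda$ and the disc — in particular verifying that the two accesses to infinity in $U_\lambda$ correspond exactly to the two singularities and that no others appear (so that $D$ can indeed be chosen with $U_\lambda \setminus f_\lambda^{-1}(\overline D)$ having exactly two components) — is where the real work lies; everything else is either a direct consequence of Proposition~\ref{prop:basics} and Theorem~\ref{theo:tracts} or an elementary computation with the infinite product.
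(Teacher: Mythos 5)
Your setup and several of the closing steps do match the paper's argument: identifying $U_\lambda$ as the unbounded, symmetric immediate basin of the attracting fixed point $0$ containing $\R$ and both critical values $\pm\lambda$ (giving \ref{p1}); invoking Theorem~\ref{theo:tracts} to see that the associated inner function has exactly two singularities, which the symmetry places at antipodal points of $S^1$ (at $\pm 1$ with the normalisation $\phi(0)=0$, $\phi'(0)>0$, since they correspond to the two ends of the real axis in $U_\lambda$, not to $\pm i$); and the final identification of the parameter via $\lambda = g'(0) = \prod_{n\geq 1}a_n^2$ together with the check that $\tau\mapsto \prod_{n\geq 1}\bigl(\frac{\tau^n-1}{\tau^n+1}\bigr)^2$ is a continuous, strictly increasing bijection from $(1,\infty)$ onto $(0,1)$, whose inverse is $\psi$. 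That last part is exactly the paper's construction of $\psi$.

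However, there are two genuine gaps, and they are precisely the points you defer to as ``where the real work lies''. First, you cannot deduce from Proposition~\ref{prop:basics}(\ref{theo:b}) that the normalised $g$ is an infinite Blaschke product: that proposition only provides a Blaschke product after a M\"obius change of coordinates, whereas here the Riemann map is pinned down by requiring the fixed point at $0$ and odd symmetry, so one must separately exclude a singular inner factor $S$ as in \eqref{eq:singinnerdef}. The paper does this by observing that $S$ could only be two equal point masses at $\pm1$ (by Theorem~\ref{theo:tracts} and oddness), which would force $g(x)\to 0$ as $x\nearrow 1$ along $(0,1)$, contradicting the fact that $f_\lambda(x)$ oscillates without limit as $x\to+\infty$. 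Second, the qualitative features you list --- odd infinite Blaschke product, singularities at $\pm1$, multiplier $\lambda$ --- do \emph{not} determine $g$: any odd Blaschke product with real zeros $\pm b_n\to 1$ satisfying $\prod b_n^2=\lambda$ has all of them, so the zeros must actually be computed. The paper's mechanism is that $f_\lambda^2$ is $\pi$-periodic, so $J(f_\lambda)$ and hence $U_\lambda=F(f_\lambda)$ are invariant under $z\mapsto z+\pi$; consequently consecutive zeros $n\pi$ of $f_\lambda$ are equally spaced at hyperbolic distance $d$ along the real axis, which is a geodesic of $U_\lambda$, giving $\dist_{U_\lambda}(0,n\pi)=nd$ and hence $a_n=\frac{e^{nd}-1}{e^{nd}+1}$ with $\tau=e^d$. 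Your proposed substitute --- a semiconjugacy to $w\mapsto\tau w$ ``coming from the covering structure of $f_\lambda$ near $\pm i\infty$'' --- points at the wrong symmetry: the dilation $w\mapsto\tau w$ is the Riemann-map image of the translation $z\mapsto z+\pi$ of the dynamical plane, not of the covering behaviour of $f_\lambda$ over the annulus $U_\lambda\setminus\overline{D}$. Without these two steps, the identification of $g$ with $g_\tau$ in \eqref{singdef} does not go through.
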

\begin{remark}\normalfont
The proof of Theorem~\ref{theo.sine} makes strong use of symmetries of the Julia sets of the functions involved. It is not easy to see, therefore, how one might extend these results to wider families. 
\end{remark}

Let $g$ be an inner function dynamically associated to an invariant Fatou component $U$ of infinite valence for a transcendental entire function $f$. 
  Let $A$ denote the set of singularities of $g$. By the
  Schwarz reflection principle, $g$ extends to a meromorphic function on $\hat{\C}\setminus A$, so we can think of $g$ as a global complex dynamical system. 
  As mentioned above, if $U$ is an attracting or parabolic basin, we have $J(g)=S^1$; see \cite[Lemma~2]{kisaka}, 
  \cite[Lemmas~8 and~9]{BakerDominguez} and \cite[Theorem~2.24]{Bargmann}. Here 
  a point on the unit circle is in 
  the Julia set $J(g)$ if it has no neighbourhood on which the iterates
 of $g$ are defined and normal \cite[Section~3]{BakerDominguez}. If $\# A=1$, then $g$ is (up to conformal conjugacy) a transcendental meromorphic function. When $A$ is countable,
   $g$ belongs to a class of functions for which the theory of complex dynamics was developed by Bolsch~\cite{bolsch}. Similarly, if $f$ has only finitely many singular
 values in the Fatou component $U$, then $g$ is a \emph{finite-type map} in the sense of Epstein~\cite{epsteinthesis}; see also~\cite{remperadial,epsteincheritat}. In particular, inner functions
  allow us to construct many examples of functions in these classes for which the Julia set is a circle. A larger class of holomorphic functions for which Fatou-Julia iteration theory
  has been extended is Epstein's theory of \emph{Ahlfors islands maps}; see \cite{remperadial,remperipponexotic}, and also~\cite{bakerdominguezherring} for the case
   where $A\neq S^1$. It would be interesting to investigate when the inner function $g$ associated to $f$ satisfies this Ahlfors islands condition.

 For families of entire functions with a finite number of singular values, it is plausible that the preceding observation about finite-type maps, together with surgery
  techniques similar to Theorem~\ref{theo:finiteblaschke}, can lead to a 
   description of the associated inner functions. On the other hand, it appears to be very difficult
  to develop general principles for inner functions associated to Fatou components where the singular values are allowed to accumulate on the boundary. It is perhaps 
  surprising that we can nonetheless give a very precise description in one particular case. 
\begin{theorem}
\label{theo.Fatou}
Let $\mathcal{F}$ be the family of {\tef}s
\begin{equation}
\label{Fatoudef}
f_\lambda(z) \defeq \lambda + z + e^{-z}, \qfor \lambda > 0.
\end{equation}
Let $\mathcal{G}$ be the family of maps 
\begin{equation}
\label{eq:gmudef}
g_{\lambda} \colon \HH \to \HH; \quad g_{\lambda}(z) \defeq z - \lambda \frac{\cot z}{2}, \qfor \lambda > 0.
\end{equation}
Then \ref{p1}, \ref{p2} and \ref{p3} hold for these families, with $g_{\lambda}$ being associated to the restriction of $f_{\lambda}$ to its single Fatou component.
\end{theorem}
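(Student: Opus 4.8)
The plan is to pass to the punctured plane. Since $\pi(z)=e^{-z}$ satisfies $\pi\circ f_\lambda=F_\lambda\circ\pi$ with $F_\lambda(w)=e^{-\lambda}\,w\,e^{-w}$, I would first study $F_\lambda$. This is a transcendental entire function of order $1$ with an attracting fixed point at $0$ of multiplier $e^{-\lambda}$; its only finite asymptotic value is $0$ (attained as $\operatorname{Re}w\to+\infty$) and its only critical value is $e^{-\lambda-1}=F_\lambda(1)$, so $S(F_\lambda)=\{0,e^{-\lambda-1}\}$ and $F_\lambda\in\mathcal B$. Because $F_\lambda$ maps $(0,\infty)$ into itself with $F_\lambda(t)<t$, the interval $(0,\infty)$ — hence $e^{-\lambda-1}$ — lies in the immediate basin $A$ of $0$; thus $S(F_\lambda)\subset A$ and $F_\lambda$ is hyperbolic. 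A direct estimate shows $\{\,|F_\lambda|>M\,\}$ is connected and simply connected for large $M$, so $F_\lambda$ has a single tract; by the structure theory of hyperbolic bounded-type functions of finite order, $J(F_\lambda)$ is a Cantor bouquet, so $F(F_\lambda)=A$ is connected and completely invariant. Therefore $F(f_\lambda)=\pi^{-1}(F(F_\lambda))=\pi^{-1}(A\setminus\{0\})=:U$ is connected (a loop in $A\setminus\{0\}$ around $0$ generates $\pi_1(\mathbb C^\ast)$), so $f_\lambda$ has exactly one Fatou component $U$; it is forward invariant (e.g. $\{\operatorname{Re}z>R\}\subset U$ is forward invariant for $R$ large) and $f_\lambda\colon U\to U$ has infinite valence, as $A$ contains a right half-plane on which $F_\lambda$ attains every small value infinitely often. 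This establishes \ref{p1}.

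Next I would identify the inner function $\hat g\colon\mathbb D\to\mathbb D$ dynamically associated to $F_\lambda|_A$, in the normalisation for which $\hat g(0)=0$ is the attracting fixed point. Since $F_\lambda^{-1}(0)=\{0\}$, a single simple point, $\hat g$ has exactly one zero, a simple one at $0$. Since $F_\lambda$ has one tract and $A$ is unbounded and forward invariant with $S(F_\lambda)\subset A\subset F(F_\lambda)$, Corollary~\ref{corr:tracts} gives that $\hat g$ has at most one singularity on $\partial\mathbb D$; and it has at least one, because $\hat g$, being of infinite valence, is not a finite Blaschke product. Hence $\hat g$ has exactly one singularity, and the canonical factorisation of inner functions forces
\[
\hat g(w)=e^{i\gamma}\,w\,\exp\!\left(-c\,\frac{w_0+w}{w_0-w}\right),\qquad \gamma\in\mathbb R,\ c>0,\ w_0\in\partial\mathbb D.
\]
Conjugating by a rotation I may take $w_0=1$, and then $e^{i\gamma}e^{-c}=\hat g'(0)=e^{-\lambda}>0$ forces $\gamma=0$ and $c=\lambda$, so $\hat g(w)=w\exp\!\bigl(-\lambda\tfrac{1+w}{1-w}\bigr)$ — a Blaschke factor times the singular inner function with a single atom of mass $\lambda$ at $w=1$. (Since $F_\lambda(w)=e^{-\lambda}\,w\,e^{-w}$ has the form $\lambda'P(w)e^{Q(w)}$, this formula also follows from Corollary~\ref{cor:exponentials}.)

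Finally I would lift back. The map $\pi|_U\colon U\to A\setminus\{0\}$ is a universal covering, and so is $w\mapsto e^{2iw}$ from $\mathbb H$ onto $\mathbb D\setminus\{0\}$; matching them through the Riemann maps $\phi_U\colon\mathbb H\to U$ and the fixed-point-normalised Riemann map $\Psi\colon\mathbb D\to A$ — checking that the deck-group generators correspond, i.e. $\phi_U(z+\pi)=\phi_U(z)+2\pi i$ — one can choose $\phi_U$ with $\pi\circ\phi_U=\Psi\circ(e^{2i\cdot})$, that is $e^{-\phi_U(z)}=\Psi(e^{2iz})$. Substituting this into $\pi\circ f_\lambda=F_\lambda\circ\pi$ and $\Psi\circ\hat g=F_\lambda\circ\Psi$ yields, for the inner function $G:=\phi_U^{-1}\circ f_\lambda\circ\phi_U$ of $f_\lambda|_U$, the identity $e^{2iG(z)}=\hat g(e^{2iz})$. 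Since $\tfrac{1+e^{2iz}}{1-e^{2iz}}=i\cot z$, the formula for $\hat g$ turns this into $e^{2iG(z)}=e^{2i(z-\frac\lambda2\cot z)}$, so $G-g_\lambda$ is constant with value in $\pi\mathbb Z$; comparing behaviour as $\operatorname{Im}z\to\infty$ — where $\Psi(w)=\Psi'(0)w+O(w^2)$ gives $\phi_U(z)=-2iz+O(1)$ and hence $G(z)=z+\tfrac{i\lambda}2+o(1)=g_\lambda(z)+o(1)$ — this constant is $0$, so $G=g_\lambda$. This is \ref{p2}; and \ref{p3} is then immediate, since $\mathcal F$ and $\mathcal G$ are both parametrised by $\lambda>0$ via the correspondence $f_\lambda\mapsto g_\lambda$.

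I expect the main obstacle to be the dynamical input in the first paragraph — showing that $A$ is completely invariant, equivalently that $f_\lambda$ has a single Fatou component — which requires placing $F_\lambda$ within the reach of the Cantor-bouquet results for hyperbolic bounded-type maps (in particular verifying that its one tract has adequate geometry). The inner-function bookkeeping in the second and third paragraphs is comparatively routine once this is in hand.
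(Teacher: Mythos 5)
Your argument is correct, but it takes a genuinely different route from the paper's. The paper works directly in the half-plane: it shows by hand that $h=\phi^{-1}\circ f_\lambda\circ\phi$ has a single singularity (noting explicitly that Theorem~\ref{theo:tracts} is \emph{not} applicable because $S(f_\lambda)$ is not compactly contained in $U$), extends $h$ by Schwarz reflection, locates its poles at $\pi\Z$ and its simple fixed points at the odd multiples of $\pi/2$ by transporting the poles and fixed points of $f_\lambda$ through the Riemann map and the translation symmetry, factorises $h(z)=z+e^{H(z)}\cot z$, shows $e^{H}$ is a constant $-\nu$, and finally computes $\nu=\lambda/2$ via the Koebe distortion theorem applied to $\phi$ on large discs $D(it,t)$. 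You instead semiconjugate to $F_\lambda(w)=e^{-\lambda}we^{-w}$ on $\C^*$, observe that this map falls squarely under the paper's own Theorem~\ref{theo:finite} (it is, up to sign, Example~\ref{ex:zexp}), and read off the inner function $w\exp(-\lambda\tfrac{1+w}{1-w})$ of its immediate basin, with the constant $c=\lambda$ obtained for free from the multiplier $\hat g'(0)=F_\lambda'(0)=e^{-\lambda}$; pulling back through the exponential covering then yields $g_\lambda$ directly. This buys you two real simplifications: the Koebe distortion computation is replaced by a one-line multiplier identity, and the singularity count, which the paper must do by an ad hoc preimage analysis because Corollary~\ref{corr:tracts} fails for $f_\lambda$ itself, becomes an application of that corollary to $F_\lambda$, whose singular values \emph{are} compactly contained in its Fatou set. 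The price is that the dynamical input is front-loaded: you need $F(F_\lambda)$ to equal the immediate basin $A$ of $0$, so that $F(f_\lambda)=\pi^{-1}(A\setminus\{0\})$ is connected and the covering bookkeeping goes through. That is a fair trade, since the paper's proof also does not establish complete invariance of $U_\lambda$ (it calls it ``easy to show'', implicitly leaning on \cite{Fweb}).

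Two small points to tighten. First, ``hyperbolic bounded-type of finite order'' is not by itself enough for a connected Fatou set; the result you want (Bara\'nski, \cite{Baranski}) requires the singular values to lie in a compact subset of the \emph{immediate} basin of an attracting \emph{fixed} point, together with finite order. You do verify exactly this for $F_\lambda$ (namely $S(F_\lambda)=\{0,e^{-\lambda-1}\}\subset A$ via the invariance of $(0,\infty)$), so the substance is there, but the hypothesis should be stated precisely. Second, $A$ need not contain a right half-plane ($|F_\lambda(R+iy)|=e^{-\lambda-R}\sqrt{R^2+y^2}\to\infty$ as $|y|\to\infty$); infinite valence of $F_\lambda|_A$ should instead be deduced from the logarithmic tract over the asymptotic value $0$, which lies in $A$ once complete invariance is known. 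Neither issue affects the correctness of the overall argument.
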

\begin{remarks}
\mbox{ }
\begin{enumerate}
\item Again, the proof of this result makes strong use of symmetries of Julia sets, and does not extend to the case
 where $\lambda$ is not real and positive. 
\item The map $g_\lambda$ is conjugate to the map $h_\lambda(z) \defeq z + \lambda \tan z$ via the conjugation $z \mapsto \pi/2 - z$. We prefer the parameterisation $g_\lambda$ as it makes the proof slightly simpler. Note that the dynamics of the map $h_1$ was studied in \cite{Accesses}.
\item The dynamics of the maps $f_{\lambda}$ was studied in \cite{Fweb}, under the parameterisation 
\[
    h(z) \defeq z + a + be^{cz},  \quad b \ne 0, \ ac < 0.
\]
(Up to affine conjugacy, this is the same family as $\mathcal{F}$.) 
\end{enumerate}
\end{remarks}

 To conclude, let us return to the case of inner functions associated to $f\colon U\to V$ where $U\neq V$, and in particular to the  case where $U$ and $V$ are 
  simply-connected \emph{wandering domains} of a transcendental entire function $f$. Similarly as in Theorem~\ref{theo:finiteblaschke}, we show that
  every finite Blaschke product may arise in this manner.

\begin{theorem}\label{theo:wandering}
 Let $g\colon \D\to\D$ be a Blaschke product of degree $d$, with $2\leq d<\infty$. Then there is a transcendental entire function $f$ having wandering 
   domains $U$ and $V=f(U)$ which are bounded Jordan domains and such that $g$ is an inner function associated to $f|_U$. 
\end{theorem}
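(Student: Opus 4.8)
The plan is to construct $f$ by quasiconformal surgery, starting from a quasiregular model whose combinatorics are tailored to $g$; this runs parallel to the proof of Theorem~\ref{theo:finiteblaschke}, the new ingredient being a construction of wandering domains in place of attracting or parabolic basins. The first point to note is that, since $g$ is a \emph{finite} Blaschke product, it extends by Schwarz reflection to a rational map of degree $d$ whose poles all lie outside $\overline{\D}$, so $g$ is holomorphic and proper of degree $d$ on some disc $\D'\supset\overline{\D}$. Consequently, if $U_0$ is a round disc with Riemann map $\phi\colon\D\to U_0$, extended conformally to $\phi'$ on a slightly larger disc, and if $\psi'$ is a conformal map of $g(\D')$ onto a Jordan domain, then $h\defeq\psi'\circ g\circ(\phi')^{-1}$ is holomorphic and proper of degree $d$ on a Jordan domain $U_0'\supset\overline{U_0}$, its image $U_1\defeq h(U_0)$ is a Jordan domain, and $\psi\defeq\psi'|_{\D}\colon\D\to U_1$ is conformal with $h|_{U_0}=\psi\circ g\circ\phi^{-1}$. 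It is precisely this holomorphic extension of $\psi\circ g\circ\phi^{-1}$ past $\partial U_0$ that will let us splice this map into an ambient entire-type map.

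Next I would build a quasiregular map $F\colon\C\to\C$ with the following structure: there are pairwise disjoint bounded Jordan domains $U_0,U_1,U_2,\dots$ (the two above, together with further domains chosen to escape to infinity fast enough that $F$ is of transcendental type); $F\equiv h$ on $U_0$; $F$ maps $U_n$ conformally onto $U_{n+1}$ for $n\ge 1$; $F$ is holomorphic off a union $E$ of thin collars surrounding the curves $\partial U_n$ (where continuity forces the transition between the ``inner'' map and the ambient map); and the complement of $\bigcup_n U_n$ is dynamically wild, so that each $\partial U_n$ lies in the quasiregular Julia set of $F$. Such a model is obtained by adapting the established constructions of entire functions possessing bounded simply connected wandering domains --- Bishop's quasiconformal folding \cite{Bishop}, or the Eremenko--Lyubich--Bergweiler cut-and-paste method --- so that the inner map on the first domain of the orbit is the prescribed degree-$d$ map $h$ rather than a univalent one.

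The surgery step is then routine. Since the $U_n$ are pairwise disjoint with $F(U_n)=U_{n+1}$, a forward orbit that enters the chain stays in it and never returns to an earlier domain, and one arranges the model so that the orbits meeting $E$ infinitely often form a null set; the standard pull-back construction therefore produces an $F$-invariant Beltrami coefficient $\mu$ with $\lVert\mu\rVert_\infty<1$, and $\mu$ is trivial on $U_0$ and on $U_1$ because orbits starting in either of these domains run through the cores $U_0\to U_1\to\cdots$ and so miss $E$. By the Measurable Riemann Mapping Theorem there is a quasiconformal $\Phi\colon\C\to\C$ integrating $\mu$, and $f\defeq\Phi\circ F\circ\Phi^{-1}$ is a transcendental entire function (transcendental because $F$ coincides with a transcendental entire map near infinity). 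Putting $U\defeq\Phi(U_0)$ and $V\defeq\Phi(U_1)=f(U)$, these are bounded Jordan domains, the $\Phi(U_n)$ are exactly the Fatou components along this orbit (transporting $\partial U_n\subset J(F)$ through $\Phi$) and are pairwise distinct, so $U$ is a wandering domain, not a Baker domain. Finally $\mu$ being trivial on $U_0$ and $U_1$ makes $\Phi\circ\phi$ and $\Phi\circ\psi$ into conformal maps of $\D$ onto $U$ and $V$; conjugating $f|_U=\Phi\circ h\circ\Phi^{-1}$ by these Riemann maps recovers exactly $g$, so $g$ is an inner function associated to $f|_U$ in the sense of Proposition~\ref{prop:basics}.

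The hard part is the construction of the quasiregular model. One needs a single transcendental-type map that carries the prescribed degree-$d$ inner map on the first domain of a disjoint, escaping orbit of Jordan domains \emph{and} keeps the entire complement of that orbit inside the Julia set, so that the wandering domain produced really is the prescribed Jordan domain and nothing larger --- if the Fatou component were strictly larger than $\Phi(U_0)$, then $f|_U$ would take some values more than $d$ times and the associated inner function could not be $g$. Reconciling these two requirements while keeping the non-holomorphic collars transient is the technical heart of the argument; the surgery itself is soft, exactly because forward orbits of a wandering domain pass through it only once.
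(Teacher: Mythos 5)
Your overall strategy---realise $g$ on a wandering Jordan domain by quasiconformal surgery on a quasiregular model, exploiting that a wandering orbit passes through each domain only once---is the right one, and your closing observations (the dilatation must be transient; the Fatou component must not be strictly larger than $\Phi(U_0)$, else the degree would be wrong) are exactly the correct checkpoints. But the proposal has a genuine gap where you yourself locate ``the technical heart'': the quasiregular model $F$ carrying the prescribed degree-$d$ holomorphic map $h$ on $U_0$, with the complement of the orbit $\bigcup_n U_n$ entirely contained in the Julia set and with the non-holomorphic collars transient, is never constructed. Deferring this to ``adapting Bishop's folding or the cut-and-paste method'' is not a proof: the constructions you cite either produce univalent behaviour on the wandering orbit or (as in \cite[Theorem~5.3]{classifyingwandering}, which is what Theorem~\ref{theo:finitecomplicated} uses) only give $f$ \emph{approximating} a prescribed Blaschke product on each domain, not realising it exactly; upgrading them to splice in an exact degree-$d$ map while controlling the entire Julia set is precisely the nontrivial content, and it is not clear that it goes through as stated.

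The paper closes this gap differently, and more cheaply, in two moves. First, Proposition~\ref{prop:wandering} exhibits an \emph{explicit} entire function $h$ with a wandering Jordan domain $W$ such that $h\colon h^n(W)\to h^{n+1}(W)$ is proper of degree $d$ for all $n$: one takes the self-map $z\mapsto -z^2\exp(p(z)-c)$ of $\C\setminus\{0\}$, whose super-attracting basin of $-1$ is a degree-$d$ invariant quasidisc by \cite[Theorem~1.10]{BFR}, and lifts it under $\exp$ to turn that basin into a wandering orbit. Second, the surgery does not require your holomorphic extension of $g$ past $S^1$ or any ambient model at all: letting $\tilde g$ be the Blaschke product already associated to $h\colon W\to h(W)$, both $g|_{S^1}$ and $\tilde g|_{S^1}$ are analytic degree-$d$ coverings, so there is an analytic (hence quasisymmetric) circle map $\theta$ with $g\circ\theta=\tilde g$; one replaces $h$ on $W$ only by $\phi_1^{-1}\circ g\circ\theta\circ\phi_0$, glues by the Bers lemma, and pulls the dilatation of $\theta\circ\phi_0$ back along the backward orbit of $W$ (which each forward orbit meets at most once). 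No conjugacy, no modification outside $W$, and no control of the global Julia set beyond what $h$ already provides is needed. If you want to salvage your write-up, the cleanest fix is to replace your unconstructed model by such an explicit starting function and move the quasiregularity inside $U_0$ rather than into collars in its complement.
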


Moreover, using approximation theory we can construct a \emph{single} entire function $f$ with an orbit of
wandering domains whose associated inner functions approximate any desired 
  Blaschke product: 

\begin{theorem}
\label{theo:finitecomplicated}
There is a {\tef} $f$ having a simply-connected wandering domain $U$ with the following property. Given a finite Blaschke product $B$ and $\epsilon > 0$, there is $n\geq 0$ and
 a Blaschke product $g$ associated to $f\colon f^n(U)\to f^{n+1}(U)$, such that the following both hold.
\begin{enumerate}[(i)]
\item $|g(z) - B(z)| < \epsilon, \qfor z \in \D$.
\item $\operatorname{deg}(g)= \operatorname{deg}(B).$
\end{enumerate}
\end{theorem}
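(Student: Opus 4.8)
The plan is to build $f$ by an approximation-theoretic construction (Arakelyan or Runge-type), arranging an orbit of bounded Jordan wandering domains $U = U_0, U_1 = f(U_0), \dots$ that shadows a prescribed sequence of model Blaschke maps. First I would set up a countable list $(B_k)_{k\geq 1}$ of finite Blaschke products that is dense, in the sense of uniform approximation on $\overline{\D}$ within each fixed degree, among all finite Blaschke products; since for each degree $d$ the space of degree-$d$ Blaschke products is finite-dimensional, such a countable list exists, and it suffices to realise, for each $k$, some iterate $f\colon f^{n}(U)\to f^{n+1}(U)$ whose associated inner function is \emph{exactly} $B_k$ (conjugated by the relevant Riemann maps), with matching degree. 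Conclusion (ii) is then automatic from Proposition~\ref{prop:basics}\ref{theo:a}, since exact realisation forces equal valence; and (i) follows because $B$ is $\epsilon$-close on $\overline{\D}$ to some $B_k$ of the same degree.

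Second, I would fix disjoint round discs $D_0, D_1, D_2, \dots$ in $\C$ marching off to infinity (say centred at points $c_j$ with $|c_j|\to\infty$ fast, radii shrinking), and on a neighbourhood of $\overline{D_j}$ prescribe $f$ to agree with $L_{j+1}\circ B_j^{\ast}\circ L_j^{-1}$, where $L_j\colon \D\to D_j$ is an affine (or Möbius) identification and $B_j^{\ast}$ is the chosen model map sending $\D$ into $\D$ properly of the required degree (for the ``filler'' steps between the prescribed $B_k$'s, take $B_j^{\ast}$ to be $z\mapsto z^2$ or any fixed degree-$2$ Blaschke product, so every step is a genuine proper self-map of the disc). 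One must check the compatibility/consistency needed to invoke an approximation theorem: the union $E = \bigcup_j \overline{D_j}$ together with the prescribed holomorphic germ on a neighbourhood of $E$ should have connected, locally connected complement in $\hat\C$, so that Arakelyan's theorem (or the sharper Gauthier--Hengartner / Nersesyan versions allowing a prescribed polynomial on each component together with derivative control) yields a transcendental entire $f$ with $|f - (\text{prescribed map})| < \eta_j$ on $\overline{D_j}$, where $\eta_j\to 0$ rapidly.

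Third, I would run the standard ``shadowing + hyperbolicity'' argument to turn the approximation into honest wandering domains with the exact inner functions: because each model map $B_j^{\ast}\colon \D\to\D$ is a proper map with $B_j^\ast(\partial\D)=\partial\D$ and is uniformly expanding near $\partial\D$ in the hyperbolic metric, a sufficiently small perturbation of $f$ on $\overline{D_j}$ still maps a slightly smaller Jordan domain $U_j \subset D_j$ onto a Jordan domain $U_{j+1}\subset D_{j+1}$ as a proper map of the same degree, the $U_j$ lie in the Fatou set (by a normal-families / contraction argument using that orbits escape to infinity through the shrinking discs and, say, $f$ is large off $E$ so the grand orbit stays inside $\bigcup D_j$), and the $U_j$ are pairwise disjoint and non-preperiodic, hence a genuine orbit of wandering domains. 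The slightly smaller Jordan domains can be taken to be round discs again (pulling back under $B_j^\ast$), so ``bounded Jordan domain'' is preserved. One then needs a \emph{rigidity} step to upgrade ``close to $B_j^\ast$'' to ``equal to $B_j^\ast$ after the correct choice of Riemann maps'': this is \emph{not} automatic — a small perturbation of $f$ changes the domains and hence the Riemann maps — so instead I would either (a) arrange the construction so that the finitely many parameters of the target degree-$d$ Blaschke product are hit exactly, using an intermediate-value / degree argument on the (finite-dimensional) parameter map $\text{(perturbation)} \mapsto \text{(associated Blaschke product)}$, or (b) concede that one only gets $\epsilon$-closeness of the realised inner function to $B_j^\ast$, then absorb the error by having pre-chosen $B_j^\ast$ itself $\epsilon/2$-close to the desired $B$; option (b) already suffices for the statement of the theorem, since (i) only asks for $\epsilon$-approximation, while (ii) holds because degree is locally constant under $C^0$-small proper perturbations.

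The main obstacle is this last point made precise: controlling the \emph{associated} inner function — which depends on $f$ through two Riemann maps of perturbed domains — rather than $f$ itself. The cleanest route, which I would pursue, is to never need exact realisation: choose the dense family $(B_k)$ first, build $f$ so that the $k$th realised inner function is within $\epsilon_k$ (of the prescribed $B_k$) in the uniform norm on $\overline{\D}$, with $\epsilon_k\to 0$, using the quantitative continuity of the Riemann map under Carathéodory convergence of the domains (the domains converge to the model disc configuration as the approximation errors $\eta_j\to 0$) plus uniform distortion bounds coming from the expansion of $B_j^\ast$ near $\partial\D$; then given $B$ and $\epsilon$, pick $k$ with $\|B_k - B\|_{\overline{\D}} < \epsilon/2$ and $\deg B_k = \deg B$, and pick the construction fine enough that $\epsilon_k < \epsilon/2$. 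Verifying the Carathéodory-convergence/Riemann-map-stability estimate, and checking that the prescribed data on $E$ satisfies the hypotheses of the approximation theorem (connectedness of the complement, controlled growth of $\eta_j$ so that $f$ is genuinely entire and transcendental with the escaping grand orbit staying in $\bigcup D_j$), are the two technical cores of the argument.
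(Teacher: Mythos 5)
Your strategy is essentially the one the paper follows: fix a countable family $(B_k)$ of finite Blaschke products dense for uniform convergence, build a single entire function $f$ with an escaping orbit of nearly round wandering domains $U_n$ on which $f$ is uniformly close (with error tending to $0$) to translated copies of the $B_n$ and has matching degree, control the Riemann maps via Carath\'eodory kernel convergence, and finish by density. The main structural difference is that the paper does not carry out the Arakelyan-type construction and shadowing argument of your second and third paragraphs; it invokes \cite[Theorem~5.3]{classifyingwandering} as a black box, which supplies exactly the function $f$, wandering domains with $D(4n,r_n)\subset U_n\subset D(4n,R_n)$ and $r_n,R_n\to 1$, the uniform approximation of $f$ by $T_{n+1}\circ B_n\circ T_n^{-1}$, and the degree statement. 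Your sketch of that construction is plausible and in the spirit of how the cited result is proved, but it concentrates almost all of the unverified technical work (in particular, that the $U_n$ really are Fatou components and full components of $f^{-1}(U_{n+1})$), so relying on the citation is the efficient choice. The one point where your write-up has a genuine, though reparable, gap is the passage from locally uniform to uniform convergence on $\D$ of the associated inner functions $g_{n_p}=\phi_{n_p+1}^{-1}\circ f\circ\phi_{n_p}$: Carath\'eodory kernel convergence controls the Riemann maps only on compact subsets of $\D$, and your appeal to ``distortion bounds near $\partial\D$'' is too vague to deliver the estimate required by conclusion (i) on all of $\D$. The paper closes this with Proposition~\ref{prop:conv}: a sequence of degree-$d$ Blaschke products converging locally uniformly to a degree-$d$ Blaschke product converges uniformly on $\D$ (Rouch\'e's theorem to confine the zeros to a fixed smaller disc, then Vitali--Porter on a slightly larger disc). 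Since you already observe that the degrees are preserved under the perturbation, this lemma is exactly the missing step, and with it your argument matches the paper's.
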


\emph{Acknowledgments:} 
We would like to thank Dimitrios Betsakos and Misha Lyubich for their comments and observations which initiated this paper, Phil Rippon and Ian Short for useful discussions, and the referee for helpful comments.
%
%
%
%
%
\section{Proof of Proposition~\ref{prop:basics}}
\label{S.basics}
In this section, our goal is to prove Proposition~\ref{prop:basics}. To do this, we first need a little background on inner functions.

It is well-known that it is possible to factorise inner functions in a canonical way. First we define a \emph{singular inner function} as a function of the form
\begin{equation}
\label{eq:singinnerdef}
S(z) \defeq \exp \left(- \int \frac{e^{i\theta} + z}{e^{i\theta} - z} \ d\mu(\theta)\right),
\end{equation}
for some positive and singular measure $\mu$. We then have the following,  which is due to Frostman \cite{Frostman}; see also \cite[p.72]{Garnett} together with \cite[Theorem 6.4]{Garnett}.
\begin{theorem}
\label{theo:inner}
If $g : \D \to \D$ is an inner function, then there is a Blaschke product $B$ and a singular inner function $S$ such that $g = B \cdot S$. Moreover, for all $\zeta \in \D$, except possibly for a set of capacity zero, the function
\begin{equation}
\label{eq:gzetadef}
g_{\zeta}(z) \defeq \frac{g(z) - \zeta}{1 - \overline{\zeta}g(z)},
\end{equation}
is a Blaschke product.
\end{theorem}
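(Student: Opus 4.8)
The plan is to treat the two assertions of Theorem~\ref{theo:inner} separately. The canonical factorisation $g = B \cdot S$ is essentially elementary, resting on Jensen's formula (to control the zeros) and the Herglotz representation of positive harmonic functions (to produce the singular factor). The Frostman statement about $g_\zeta$ is the deeper claim and requires a potential-theoretic argument, which I expect to be the main obstacle. Throughout I regard $g$ as a bounded holomorphic function, since $|g|<1$ on $\D$.

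For the factorisation I would first extract the Blaschke product. Let $(a_n)$ be the zeros of $g$ in $\D$, listed with multiplicity (dividing out a power of $z$ first if $g(0)=0$). Applying Jensen's formula to $g$ on the circles $|z|=r$ and using that $|g|<1$ gives the uniform bound $\sum_{|a_n|<r}\log\tfrac1{|a_n|}\le \log\tfrac1{|g(0)|}$; letting $r\to1$ yields the Blaschke condition $\sum_n(1-|a_n|)<\infty$, so the product $B$ in \eqref{eq:Bdef} converges to an inner function with exactly the zeros $(a_n)$. The quotient $h \defeq g/B$ is then holomorphic and zero-free on $\D$, and comparing unimodular radial limits of $g$ and $B$ shows that $h$ is itself a zero-free inner function with $|h|\le1$. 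Since $h$ is zero-free on the simply connected domain $\D$, the function $-\log|h|$ is nonnegative and harmonic, so by the Herglotz representation there is a positive measure $\mu$ on $\partial\D$ with $-\log|h(z)| = \int \tfrac{1-|z|^2}{|e^{i\theta}-z|^2}\,d\mu(\theta)$. Because $h$ is inner, its boundary values of $\log|h|$ vanish a.e., which forces the absolutely continuous part of $\mu$ to be zero, i.e.\ $\mu$ is singular; reading off the harmonic conjugate then identifies $h$, up to a unimodular constant, with the singular inner function $S$ of \eqref{eq:singinnerdef}. Absorbing the constant gives $g = B\cdot S$, and there is no outer factor precisely because $\log|g|=0$ a.e.\ on $\partial\D$.

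For the Frostman statement I would begin by noting that $g_\zeta$ in \eqref{eq:gzetadef} is the composition of $g$ with a disc automorphism, hence again inner, so by the first part it has its own factorisation with singular measure $\mu_\zeta$; and by uniqueness of that factorisation, $g_\zeta$ is a Blaschke product exactly when $\|\mu_\zeta\|=0$. Evaluating the identity $-\log|h(0)| = \sum_n\log\tfrac1{|a_n|} + \|\mu\|$ (valid for any inner $h=B\cdot S$) at $h=g_\zeta$, whose zeros are the solutions of $g(a)=\zeta$, reduces the total singular mass to the clean formula
\[
\|\mu_\zeta\| \;=\; G_\D(\zeta, g(0)) \;-\; \sum_{g(a)=\zeta} G_\D(a,0),
\]
where $G_\D$ denotes the Green's function of $\D$ and one checks that $-\log|g_\zeta(0)| = G_\D(\zeta,g(0))$.

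The remaining task is to show that this nonnegative function $\zeta\mapsto\|\mu_\zeta\|$ vanishes for all $\zeta$ outside a set of logarithmic capacity zero, and this is the genuinely hard part. The plan is to recognise the right-hand side above as the defect between a Green potential and its "balayage-type" counterpart, and then to invoke the classical fact from potential theory that the potential of a measure of finite energy is finite quasi-everywhere, so that the set on which a positive amount of singular mass survives is exceptional of capacity zero. I expect the careful bookkeeping here — controlling the counting sum $\sum_{g(a)=\zeta}G_\D(a,0)$ uniformly and translating the finiteness of an energy integral into the capacity-zero conclusion — to be the crux of the argument, whereas everything preceding it is standard $H^\infty$ theory.
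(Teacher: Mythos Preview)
The paper does not prove Theorem~\ref{theo:inner} at all: it is quoted as a classical result due to Frostman, with references to \cite{Frostman} and \cite[p.~72 and Theorem~6.4]{Garnett}. So there is no ``paper's own proof'' to compare your proposal against; the authors simply invoke the theorem as a black box.

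That said, your outline is essentially the standard textbook argument (as found, for instance, in Garnett). The factorisation part is correct, with one small presentational slip: you assert $|h|\le 1$ by ``comparing unimodular radial limits'', but boundedness must be established \emph{before} you can speak of radial limits of $h$; the usual route is to note that $g/B_N$ is bounded by $1$ for each finite partial product $B_N$ (via the maximum principle, since $|B_N|\to 1$ on circles $|z|=r\to 1$) and then let $N\to\infty$. For the Frostman half, your reduction to the identity
\[
\|\mu_\zeta\| \;=\; G_{\D}(\zeta,g(0)) \;-\! \sum_{g(a)=\zeta} G_{\D}(a,0)
\]
is exactly the right starting point, and you correctly flag the remaining step~--- showing that the right-hand side is positive only on a set of capacity zero~--- as the substantive one. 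Your proposal leaves that step as a plan rather than an argument; if you want to complete it, the cleanest route is the one in Garnett: integrate the displayed identity against an arbitrary probability measure $\nu$ on $\D$, interchange sum and integral, and show that if $\nu$ has finite logarithmic energy then $\int \|\mu_\zeta\|\,d\nu(\zeta)=0$, whence $\|\mu_\zeta\|=0$ $\nu$-a.e.\ for every such $\nu$, which is precisely the capacity-zero statement.
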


We will also use the following, which is a version of \cite[Theorem 4']{Heins}.
\begin{theorem}
\label{theo:heins}
Suppose that $f$ is a {\tef}, that $V$ is a domain, and that $U$ is a component of $f^{-1}(V)$. Then exactly one of the following holds.
\begin{enumerate}[(i)]
\item there exists $n \in \N$ such that $f$ assumes in $U$ every value of $V$ exactly $n$ times (counting multiplicities).
\item $f$ assumes in $U$ every value of $V$ infinitely often with at most one exception.
\end{enumerate}
\end{theorem}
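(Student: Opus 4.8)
The theorem is a reformulation, for a general target domain $V$, of the classical valence dichotomy of Heins, and the plan is to obtain it from the two‑case analysis ``$f|_U$ proper versus not proper'', reducing the delicate ``at most one exception'' to \cite[Theorem~4$'$]{Heins} (or to an Iversen‑type argument). The starting observation is that the hypothesis ``$U$ is a connected component of $f^{-1}(V)$'' makes $f|_U\colon U\to V$ \emph{boundaryless over} $V$: if $z_k\in U$ with $f(z_k)\to w\in V$ and $(z_k)$ accumulates at some $\zeta\in\overline{U}\cap\C$, then by continuity $f(\zeta)=w\in V$, so $\zeta$ lies in the open set $f^{-1}(V)$, and since $U$ is a \emph{component} of $f^{-1}(V)$ with $\zeta\in\overline U$ we get $\zeta\in U$. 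Hence a sequence in $f^{-1}(K)\cap U$ (with $K\subset V$ compact) that has no accumulation point in $U$ must tend to $\infty$. I would also record that $f|_U$ is nonconstant (as $f$ is a \tef), and that in the situation of interest $V$, and hence $U$, is simply connected — a fact I would exploit in the Iversen‑type step.

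Next I would split into two cases. If $f^{-1}(K)\cap U$ is bounded for every compact $K\subset V$, then, by the boundaryless property, $f^{-1}(K)\cap U$ is a closed bounded subset of $U$, so $f|_U$ is a proper holomorphic map between plane domains; such a map is a branched covering of some finite degree $n$ (finiteness from properness plus discreteness of fibres; constancy of $n$ from connectedness of $V$ and local constancy of the covering degree away from the discrete branch set), and then $f$ assumes every value of $V$ exactly $n$ times counting multiplicity — alternative~(i). Otherwise there is a compact $K\subset V$ and a sequence $z_k\to\infty$ in $U$ with $f(z_k)\to w_0\in K$, and I want to show that the set $E\subset V$ of values assumed only finitely often in $U$ (with multiplicity) has at most one element — alternative~(ii). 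The basic input here is a Rouch\'e/Hurwitz argument: if $w_1\in E$, its finitely many preimages lie in finitely many pairwise disjoint closed discs in $U$ on whose boundaries $f\neq w_1$, and then, using the boundaryless property, \emph{either} $w_1$ has a neighbourhood of constant finite valence, \emph{or} some sequence $z_k'\to\infty$ in $U$ satisfies $f(z_k')\to w_1$.

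The routine parts of this plan are the proper case and the Rouch\'e bookkeeping; the genuine obstacle is the ``at most one exception'' claim, that is, ruling out two distinct omitted‑or‑finitely‑covered values. This is exactly the Iversen--Gross--Heins circle of ideas on asymptotic spots: one wants that a holomorphic map on a simply‑connected domain cannot exhibit two distinct finite valences together with the attendant escaping sequences, the upshot being that a second value of $E$ would force $f|_U$ to be proper, contradicting the non‑properness assumed in the second case. I expect the cleanest route is to verify that the hypotheses of \cite[Theorem~4$'$]{Heins} transfer verbatim to the present setting of an arbitrary simply‑connected $U$ and arbitrary domain $V$ and then invoke it, rather than to reprove the asymptotic‑spot estimate; in either case this is the heart of the argument, while everything else is formal.
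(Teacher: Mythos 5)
The paper offers no proof of this statement at all---it is presented as ``a version of \cite[Theorem 4']{Heins}'' and simply cited---so your proposal, which reduces the statement to Heins's theorem via the proper/non-proper dichotomy and correctly disposes of the routine proper case using the observation that $U$, being a full component of $f^{-1}(V)$, has no finite boundary points lying over $V$, is entirely consistent with the paper's treatment. The one point to watch is that the theorem as stated allows an arbitrary domain $V$, whereas both your Iversen-type step and Heins's original formulation work with a simply connected target; this costs nothing for the paper's applications (where $V$ is always simply connected), but it is precisely the gap hidden in the phrase ``a version of''.
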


We are now able to prove Proposition~\ref{prop:basics}.
\begin{proof}[Proof of Proposition~\ref{prop:basics}]
That $g$ is an inner function was shown in \cite{EFJS}, in a much less general context, and we repeat the argument for completeness. Suppose that $g$ was not inner. By Fatou's Theorem, there would exist a set $E \subset \partial \mathbb{D}$, of positive measure with respect to $\D$, on which $g$ had non-tangential limits of modulus strictly less than one, and on which $\phi$ had well-defined limits. It would follow that $$\phi(E\setminus \phi^{-1}(\{\infty\})) \subset \partial U$$ was a set of positive harmonic measure with respect to $U$ that was mapped by $f$ into $V$. This is a contradiction, since $f(\partial U) \subset \partial V$.

To see that $g$ can be taken to be a Blaschke product, choose $\zeta \in \D$ such that, by the second part of Theorem~\ref{theo:inner}, the function $g_{\zeta}(z) = \omega(g(z))$ is a Blaschke product, where 
\[
\omega(z) \defeq \frac{z - \zeta}{1-\overline{\zeta}z}.
\]
Set $\tilde{\phi} \defeq \phi \circ \omega^{-1}$ and $\tilde{\psi} \defeq \psi \circ \omega^{-1}$, and so 
\[
\tilde{g} \defeq \tilde{\psi}^{-1} \circ f \circ \tilde{\phi} = \omega \circ \psi^{-1} \circ f \circ \phi \circ \omega^{-1} = g_{\zeta} \circ \omega^{-1},
\]
is a Blaschke product which is associated to $f$.

If $g$ is a finite Blaschke product, of degree $d$ say, then it is easy to see that each point of $\D$ has exactly $d$ preimages up to multiplicity. The statement for $f$ is then immediate, and this gives the case~\ref{theo:a}. 

Otherwise, $g$ is an infinite Blaschke product. In particular $g^{-1}(0)$ is infinite. It follows by Theorem~\ref{theo:heins} that $g^{-1}(\zeta)$ is infinite for all $\zeta \in \D$ except at most one point.
\end{proof}

\section{Singularities of inner functions}
\label{S.singularities}
 The main goal of this section is to prove Theorem \ref{theo:tracts}. 
   Recall that by an \emph{access to infinity} within a domain $U$ we mean 
   a homotopy class of curves tending to infinity within $U$. Any 
   collection of pairwise disjoint curves to infinity comes equipped with
   a natural \emph{cyclic order}, which records how these curves are 
   ordered around $\infty$ according to positive orientation. 
   If $U$ is simply connected, this in turn gives rise to a natural cyclic order 
   on accesses to infinity in $U$. Carath\'eodory's theory of \emph{prime ends},
   see e.g.\ \cite[Chapter~2]{Pommerenke}, provides a natural
   correspondence between accesses to infinity in $U$ and the set of points on
   the unit circle where
   a Riemann map has radial limit $\infty$. Indeed, 
   it follows from the definition that accesses to infinity are in one-to-one correspondence
   with the prime ends represented by a sequence of cross-cuts tending to infinity,
   and therefore the following result follows from 
   \cite[Corollary~2.17]{Pommerenke}. 
   Compare \cite[Correspondence Theorem]{Accesses} for details. 
   
\begin{proposition}\label{prop:correspondence}
 Let $U\subset\C$ be a simply-connected domain, and let $\phi\colon \D\to U$ 
   be a conformal isomorphism. Set 
      \begin{equation}\label{eqn:infinityset}
        \Theta \defeq \{\zeta\in S^1\colon \lim_{t\nearrow 1}\phi(t\zeta) = \infty \} 
      \end{equation}
    For $\zeta\in \Theta$, let $\alpha(\zeta)$ denote the access to infinity in $U$ 
    represented by $\phi([0,1)\cdot \zeta)$.
        Then $\alpha$ is a cyclic-order-preserving 
     bijection between $\Theta$ and the set of accesses to infinity in $U$. 
     
     Moreover,
     if $\gamma\colon [0,\infty)\to U$ is any curve to infinity in $U$ representing
     an access $[\gamma]$, then $f^{-1}(\gamma(t))\to \alpha^{-1}([\gamma])$ as
     $t\to\infty$. 
\end{proposition}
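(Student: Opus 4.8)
The plan is to reduce the statement to two classical ingredients: the identification of accesses to infinity with a suitable class of prime ends of $U$, and the Carath\'eodory and Lindel\"of theorems for conformal maps. For the first ingredient, recall that a curve $\gamma$ to infinity in the simply-connected domain $U$ determines a null-chain of crosscuts of $U$ — for instance the arcs of the circles $\{\lvert z\rvert = n\}\cap U$ cutting off the tail of $\gamma$ — whose diameters in $\hat\C$ tend to $0$ and which converges to the point $\infty$; hence $\gamma$ converges to a prime end $p(\gamma)$ of $U$ of which $\infty$ is a principal point (meaning that $p(\gamma)$ is represented by a null-chain converging to $\infty$). One then shows that $p(\gamma)$ depends only on the homotopy class $[\gamma]$ among curves to infinity, that distinct accesses yield distinct prime ends, and that $[\gamma]\mapsto p(\gamma)$ is a cyclic-order-preserving bijection between the accesses to infinity in $U$ and the prime ends of $U$ with principal point $\infty$. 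This is precisely \cite[Correspondence Theorem]{Accesses} (compare also \cite[Corollary~2.17]{Pommerenke}), which I would simply quote.

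For the second ingredient, Carath\'eodory's theorem provides a homeomorphism $\hat\phi$ from $S^1$ onto the space of prime ends of $U$, under which a curve in $\D$ landing at $\zeta$ corresponds to a curve in $U$ converging to $\hat\phi(\zeta)$, and conversely. I would then check that $\hat\phi$ maps $\Theta$ precisely onto the prime ends with principal point $\infty$. If $\zeta\in\Theta$, the radius $\phi([0,1)\cdot\zeta)$ is a curve to infinity converging to $\hat\phi(\zeta)$, so that prime end has $\infty$ as a principal point, and the correspondence of the first paragraph sends the access $[\phi([0,1)\cdot\zeta)]$ to $\hat\phi(\zeta)$. Conversely, if $\hat\phi(\zeta)$ has principal point $\infty$, choose a curve $\gamma$ to infinity with $p(\gamma)=\hat\phi(\zeta)$; then $\phi^{-1}\circ\gamma$ lands at $\zeta$, while $\phi(\phi^{-1}(\gamma(t)))=\gamma(t)\to\infty$, so Lindel\"of's theorem for conformal maps (which needs no boundedness assumption since $\phi$ is univalent; see \cite[Chapter~2]{Pommerenke}) shows that $\phi$ has radial limit $\infty$ at $\zeta$, that is, $\zeta\in\Theta$. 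Hence $\alpha$ is the composite of $\hat\phi|_{\Theta}$ with the inverse of $[\gamma]\mapsto p(\gamma)$, and so is a bijection; and it is cyclic-order-preserving because $\hat\phi$ is orientation-preserving and the bijection of the first paragraph respects the cyclic order around $\infty$ — which one can also read off directly from the pairwise disjoint radial representatives $\phi([0,1)\cdot\zeta)$, $\zeta\in\Theta$.

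For the final assertion, let $\gamma$ be any curve to infinity. Then $\phi^{-1}\circ\gamma$ converges to $\hat\phi^{-1}(p(\gamma))$ by Carath\'eodory's theorem, and $\hat\phi^{-1}(p(\gamma))=\alpha^{-1}([\gamma])$ by the identifications above, so $\phi^{-1}(\gamma(t))\to\alpha^{-1}([\gamma])$ as $t\to\infty$. I expect the genuine obstacle to lie entirely in the first ingredient: showing that a curve to infinity converges to a well-defined prime end, that this prime end is a homotopy invariant of the curve, that distinct accesses correspond to distinct prime ends, and that the intrinsic cyclic order on accesses matches the circular order of the corresponding prime ends. These are standard but delicate point-set features of the prime-end formalism; since they are carried out in detail in \cite{Accesses}, I would keep that part of the argument brief and defer to that reference.
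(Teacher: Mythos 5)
Your proposal is correct and follows essentially the same route as the paper, which likewise reduces the statement to the identification of accesses to infinity with prime ends represented by cross-cuts tending to infinity (deferring the delicate point-set work to the Correspondence Theorem of \cite{Accesses}) and then invokes the Carath\'eodory prime-end correspondence together with \cite[Corollary~2.17]{Pommerenke}. Your additional use of Lindel\"of's theorem to verify that landing curves force the radial limit $\infty$ is a correct way of making the matching of $\Theta$ with the relevant prime ends explicit.
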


\begin{figure}
\begin{center}
\def\svgwidth{\textwidth}
\input{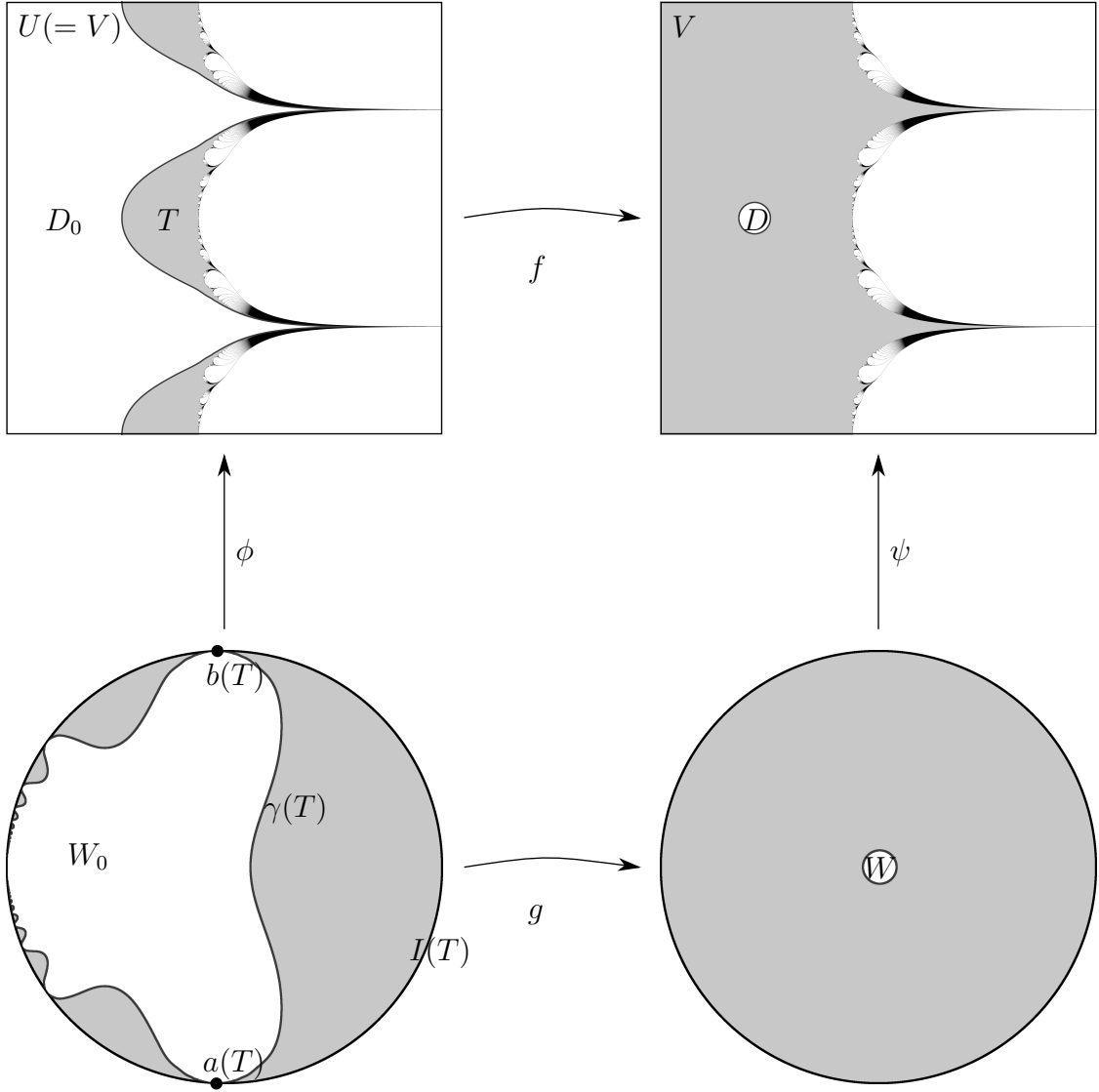}
\end{center}
\caption{\label{fig:singularities}Illustration of the proof of Theorem~\ref{theo:tracts}. Here $f$ 
is the fourth iterate of the exponential map from Figure~\ref{fig:inner}, and $U=V$ is an invariant Fatou component of $f$, whose
 boundary is shown in black.
 Observe that the set $\Theta' = \partial W_0\cap S^1$ of singularities of $g$ is infinite. Also note that $U$ has many more
 accesses to infinity than $D_0$, so that $\Theta\supsetneq \Theta'$; in fact, $\Theta$ is dense in $S^1$.}
\end{figure}

 Observe that Proposition~\ref{prop:correspondence} could also be used as a
   definition of accesses to infinity in $U$ and their cyclic order. 
   The proof of Theorem~\ref{theo:tracts}
   uses some ideas that were also used in the proof of \cite[Theorem 1.1]{EFJS};
   for the reader's convenience, we shall give a largely self-contained account,
   relying only on classical results on the boundary behaviour of
   univalent functions. 
\begin{proof}[Proof of Theorem~\ref{theo:tracts}]
Recall that $f$ is a {\tef},  $V \subsetneq \C$ is a simply-connected domain, and $U$ is a component of $f^{-1}(V)$ such that $f\colon U\to V$ has infinite valence. 
 Finally $D$ is a bounded Jordan domain containing $S(f) \cap V$, such that $\overline{D} \subset V$. 
Set $D_0 \defeq f^{-1}(D)\cap U$. Then $f\colon U\setminus \overline{D_0}\to V\setminus \overline{D}$ is a covering map. Since $V\setminus \overline{D}$ is an annulus, and the map has infinite
 degree, it follows that $f$ is a universal covering when restricted to
  any connected component $T$ 
  of $U\setminus \overline{D_0}$, and that consequently
  $D_0$ is connected, simply connected 
  and unbounded. Compare \cite[Proposition~2.9]{BFR} for details.

Let $\mathcal{T}$ denote the set of components of $U\setminus \overline{D_0}$. 
 With a slight abuse of terminology we call these the \emph{tracts in $U$}.
 Let $T\in\mathcal{T}$. By the above, 
  exactly one of the boundary components of $T$, $\Gamma(T)$ say, is a 
  preimage of $\partial D$, and so is an arc, 
  tending to infinity in both directions,
  which is mapped by $f$ as an infinite-degree covering. 
  
  Now consider Riemann maps $\phi\colon \D \to U$ and $\psi\colon\D \to V$ and an inner function $g \defeq \psi^{-1} \circ f \circ \phi$ associated to $f|_U$.
     Let $\Theta\subset S^1$ be defined as in~\eqref{eqn:infinityset}, and let $\Theta'\subset \Theta$ 
     denote the subset corresponding to accesses to infinity in 
     $D_0$. Note that, by the F.~and M.~Riesz theorem 
     \cite[Theorem~1.7]{Pommerenke}, the set $\Theta$ has
     zero Lebesgue measure and is therefore totally disconnected. 
     Let $X\subset S^1$ denote the set of singularities of $g$; note that $X$ is a compact
      subset of $S^1$. We wish to show
     that $X=\Theta'$, which will be achieved by studying the structure of 
      \[ W_0\defeq \phi^{-1}(D_0) = g^{-1}(W), \] 
      where $W=\psi^{-1}(D)$. (Compare Figure~\ref{fig:singularities}.)
      The set $\partial W_0\cap \D = g^{-1}(\partial W)$ consists of the countably many
      curves $\gamma(T) \defeq \phi^{-1}(\partial T)$ for $T\in\mathcal{T}$. 
      Each $\gamma(T)$ is an arc tending to $S^1$ in both directions. 
      By Proposition~\ref{prop:correspondence}, 
      $\gamma(T)$ in fact has two end-points $a(T),b(T)\in \Theta'$ on the
      unit circle. We may choose the labelling
      such that $\gamma(T)$ separates 
      the arc $I(T)\defeq (a(T),b(T))$ of $S^1$, understood in positive orientation,
      from $W_0$. This implies that
        \begin{equation}\label{eqn:W0boundary}
          \partial W_0 = \bigcup_{T\in\mathcal{T}} \gamma(T) \cup 
               \left(S^1\setminus \bigcup_{T\in\mathcal{T}} I(T)\right).
        \end{equation}
      
  \begin{claim}[Claim~1]
    $a(T),b(T)\in X$ for all $T\in\mathcal{T}$. 
  \end{claim}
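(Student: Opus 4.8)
The plan is to prove Claim~1 by contradiction, fixing one of the two endpoints $\zeta\in\{a(T),b(T)\}$ of $\gamma(T)$ on the circle and deriving two incompatible bounds on $\lvert g(\zeta)\rvert$. Suppose $\zeta\notin X$, so that $g$ extends holomorphically — hence continuously — to a neighbourhood $N$ of $\zeta$ in $\C$. I will show on the one hand that $\lvert g(\zeta)\rvert<1$, and on the other that $\lvert g(\zeta)\rvert=1$.

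For the first bound, recall that $\gamma(T)\subset\partial W_0\cap\D=g^{-1}(\partial W)$, where $W=\psi^{-1}(D)$, and that by Proposition~\ref{prop:correspondence} the curve $\gamma(T)$ actually converges to $\zeta$ at the relevant end (not merely accumulates there). Since $\overline{D}$ is a compact subset of $V$ and $\psi$ is a conformal isomorphism of $\D$ onto $V$, the set $\overline{W}=\psi^{-1}(\overline{D})$ is a compact subset of $\D$; in particular $\partial W\subset\{\lvert z\rvert\le r\}$ for some $r<1$. As the holomorphic extension of $g$ is continuous at $\zeta$, the value $g(\zeta)$ is a limit of points of $\partial W$ and therefore lies in the closed set $\partial W$, so $\lvert g(\zeta)\rvert\le r<1$.

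For the second bound, recall that $g$ is inner, so its radial limits exist and have modulus $1$ at almost every point of $S^1$; wherever these radial limits exist on the arc $N\cap S^1$ they agree with the continuous extension of $g$, so $\lvert g\rvert=1$ on a full-measure, hence dense, subset of $N\cap S^1$, and therefore $\lvert g\rvert\equiv1$ on $N\cap S^1$ by continuity; in particular $\lvert g(\zeta)\rvert=1$. This contradicts the first bound, so $\zeta\in X$, and applying the same argument to both endpoints proves the claim.

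I do not anticipate a genuine obstacle here: the argument rests only on two classical facts — that $\partial W$ is compactly contained in $\D$ (immediate from $\overline{D}\subset V$), and that a boundary point at which an inner function extends holomorphically is mapped into $S^1$ — together with the convergence of $\gamma(T)$ to its endpoint, which is exactly what Proposition~\ref{prop:correspondence} already supplies from Carath\'eodory's prime-end theory. Should one prefer to avoid the ``modulus one'' fact, an alternative is to exploit the covering structure directly: a holomorphic extension of $g$ across $\zeta$ would be a branched covering of some finite degree $k$ onto a neighbourhood of $g(\zeta)$, whereas $g\colon\gamma(T)\to\partial W$ inherits from $f\colon\Gamma(T)\to\partial D$ the structure of an infinite-degree covering and hence winds around $\partial W$ infinitely often as the parameter tends to $\zeta$; this would force some point of $\partial W$ arbitrarily close to $g(\zeta)$ to have infinitely many $g$-preimages accumulating at $\zeta$, contradicting local $k$-to-one behaviour.
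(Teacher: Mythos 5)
Your proof is correct, but your primary argument is not the one the paper uses. The paper's proof is a one-line consequence of the covering structure it has just set up: $g\colon\gamma(T)\to\partial W$ is a universal covering, so every point of $\partial W$ has infinitely many $g$-preimages accumulating at $a(T)$ and at $b(T)$, which is incompatible with a holomorphic~-- hence locally finite-to-one~-- extension across those points. The ``alternative'' you sketch in your final paragraph is essentially this argument. Your main argument is genuinely different and somewhat more elementary: it makes no use of the infinite valence of $g$ on $\gamma(T)$, only of the facts that $\gamma(T)$ lands at $\zeta$ (already extracted from Proposition~\ref{prop:correspondence} in the paragraph preceding the claim) and that $g(\gamma(T))\subset\partial W=\psi^{-1}(\partial D)$ is compactly contained in $\D$ because $\overline{D}\subset V$; a continuous extension would then force $\lvert g(\zeta)\rvert<1$, contradicting the standard fact~-- which you correctly derive from almost-everywhere radial limits plus continuity~-- that an inner function holomorphic across an arc of $S^1$ maps that arc into $S^1$. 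What your route buys is independence from the covering-degree discussion: it shows that \emph{any} endpoint on $S^1$ of a component of $g^{-1}(\partial W)$ is a singularity, regardless of the degree of $g$ on that component. What the paper's route buys is brevity, since the universal-covering structure of $g$ on $\gamma(T)$ is already in hand and is used again immediately in Claim~2.
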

  \begin{subproof}
    The restriction $g\colon \gamma(T)\to \partial W$ 
    is a universal covering. In particular, every point of $\partial W$ has infinitely
     many preimages near $a(T)$ and $b(T)$, and these points must be 
     singularities of $g$. 
   \end{subproof}
   
   \begin{claim}[Claim~2]
     The map $g$ extends continuously to each $I(T)$
       as an analytic universal covering $g\colon I(T)\to S^1$. In particular,
       $I(T)\cap X = \emptyset$. 
     \end{claim}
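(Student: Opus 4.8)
The plan is to recognise $g$ restricted to $\phi^{-1}(T)$ as a holomorphic universal covering of the annulus $A:=\D\setminus\overline{W}$, to transport the classical description of such coverings to the Jordan domain $\phi^{-1}(T)$ via a Riemann map, and then to match the two boundary circles of $A$ with the two arcs $\overline{\gamma(T)}$ and $\overline{I(T)}$ that make up $\partial\phi^{-1}(T)$. First I would assemble the covering picture: since $\overline{W}=\psi^{-1}(\overline{D})$ is a closed Jordan subdomain of $\D$, the set $A$ is a genuine conformal annulus, and $g|_{\phi^{-1}(T)}=\psi^{-1}\circ f\circ\phi\colon\phi^{-1}(T)\to A$ is a holomorphic universal covering, being conjugate via $\phi$ and $\psi^{-1}$ to the universal covering $f\colon T\to V\setminus\overline{D}$ found above; in particular $\phi^{-1}(T)$ is simply connected. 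Moreover $\phi^{-1}(T)$ is a Jordan domain in $\overline{\D}$: its boundary is the union of the analytic crosscut $\gamma(T)=\phi^{-1}(\Gamma(T))$ — which by Proposition~\ref{prop:correspondence} meets $S^1$ only at its endpoints $a(T)$ and $b(T)$ — and of the arc $\overline{I(T)}\subseteq S^1$, the two meeting exactly in $\{a(T),b(T)\}$. I fix a Riemann map $\Phi\colon\D\to\phi^{-1}(T)$; by Carath\'eodory's theorem it extends to a homeomorphism $\overline{\D}\to\overline{\phi^{-1}(T)}$, and I write $\mathcal A:=\Phi^{-1}(\overline{\gamma(T)})$, $\mathcal B:=\Phi^{-1}(\overline{I(T)})$, two closed arcs of $S^1$ with $\mathcal A\cup\mathcal B=S^1$ and $\mathcal A\cap\mathcal B=\{\alpha,\beta\}$, $\alpha:=\Phi^{-1}(a(T))$, $\beta:=\Phi^{-1}(b(T))$. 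Then $G:=g\circ\Phi\colon\D\to A$ is a holomorphic universal covering of $A$.

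Next I would invoke the classical boundary behaviour of a universal covering $G\colon\D\to A$ of an annulus: its deck group is infinite cyclic, generated by a hyperbolic M\"obius transformation of $\D$ with two fixed points $\xi_+,\xi_-\in S^1$; the map $G$ extends continuously to $S^1\setminus\{\xi_+,\xi_-\}$, whose two components $K$ and $K'$ are mapped, as infinite (hence universal) analytic coverings of a circle, onto $S^1$ and onto $\partial W$ respectively; and $G$ admits no continuous extension to $\xi_+$ or $\xi_-$. The point that remains is then to identify $\mathcal B$ with $\overline{K}$, that is, to show that $\overline{I(T)}$ is the boundary arc lying over $S^1$ and that $\{a(T),b(T)\}=\{\xi_+,\xi_-\}$.

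For this — the crux — I would argue as follows. Since $\Phi$ maps the open arc $\operatorname{int}\mathcal A$ into $\gamma(T)\subseteq\D$, the composition $G=g\circ\Phi$ extends continuously to $\operatorname{int}\mathcal A$ with values in $g(\gamma(T))=\partial W$ (as in the proof of Claim~1); hence $\operatorname{int}\mathcal A\subseteq S^1\setminus\{\xi_+,\xi_-\}$ and, being connected and mapped into $\partial W$, it lies in $K'$. Suppose this inclusion were proper. Then one endpoint of $\mathcal A$, say $\alpha$, would lie in the interior of the open arc $K'$, so the sub-arc of $\mathcal B$ issuing from $\alpha$ would also lie in $K'$; for each point $z$ of this sub-arc, $G$ extends continuously to $z$ with $G(z)\in\partial W$, while $\Phi$ extends continuously to $z$ with $\Phi(z)\in\overline{I(T)}\subseteq S^1$. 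Mapping a radius of $\D$ terminating at $z$ forward by $\Phi$ produces a curve in $\phi^{-1}(T)$ terminating at $\Phi(z)$ along which $g\to G(z)$, a point of $\partial W$ lying strictly inside $\D$, so by Lindel\"of's theorem $g$ would have a finite non-tangential limit inside $\D$ at each such $\Phi(z)$; but the points $\Phi(z)$ fill a non-degenerate sub-arc of $S^1$, contradicting that $g$ is inner. Therefore $\operatorname{int}\mathcal A=K'$, hence $\{\alpha,\beta\}=\{\xi_+,\xi_-\}$ and $\operatorname{int}\mathcal B=K$.

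Finally I would conclude as follows. Fix $\zeta_0\in I(T)$. By~\eqref{eqn:W0boundary} we have $\zeta_0\notin\partial W_0$, so $\zeta_0$ has a neighbourhood $N$ in $\overline{\D}$ with $N\cap\D\subseteq\phi^{-1}(T)$ and $N\cap S^1\subseteq I(T)$; on $N\cap\D$ one has $g=G\circ\Phi^{-1}$. Since $\Phi^{-1}$ extends continuously to $\zeta_0$ with value $\Phi^{-1}(\zeta_0)\in\operatorname{int}\mathcal B=K$, and $G$ extends continuously on $K$ with values in $S^1$, the function $g$ extends continuously to $\zeta_0$ with $|g(\zeta_0)|=1$. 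As $\zeta_0\in I(T)$ was arbitrary, $g$ extends continuously to all of $I(T)$ with modulus $1$ there; the Schwarz reflection principle then lets $g$ be continued holomorphically across $I(T)$, so $I(T)\cap X=\emptyset$, and $g|_{I(T)}=(G|_K)\circ(\Phi^{-1}|_{I(T)})$ is a homeomorphism followed by an infinite analytic covering of $S^1$, hence an analytic universal covering $g\colon I(T)\to S^1$. I expect the third step to be the main obstacle: a priori the covering $G$ might exhaust $\overline{I(T)}$ only partially, with one of the ends $\xi_\pm$ sitting in the interior of $I(T)$, and ruling this out is exactly where the hypothesis that $g$ is inner — through Lindel\"of's theorem and the positive measure of the relevant boundary set — is indispensable.
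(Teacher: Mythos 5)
Your proof is correct and follows essentially the same route as the paper: both exploit the fact that $g\colon\phi^{-1}(T)\to\D\setminus\overline{W}$ is a universal covering of an annulus and then read off the boundary behaviour from the standard model (the paper uses $\exp$ on a horizontal strip, you use the disc with its hyperbolic deck transformation, which is the same picture after a change of coordinates). The one place where you go beyond the paper is the ``crux'' step you identify: the paper simply asserts that $a(T)$ and $b(T)$ correspond to the two ends of the strip, whereas you prove this identification via Lindel\"of's theorem and the fact that an inner function cannot have radial limits of modulus less than one on a set of positive measure; that argument is valid and correctly supplies the justification the paper leaves implicit.
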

     \begin{subproof}
        The map $g\colon \phi^{-1}(T)\to \D\setminus W$ is a universal covering map.
          Since $\D\setminus W$ is an annulus, the restriction is 
          equivalent, up to analytic changes of coordinate in domain and range, 
          to the restriction of
          the complex exponential map to a horizontal strip $S$, with 
          $a(T)$ and $b(T)$ 
          corresponding to $-\infty$ and $+\infty$ on the boundary of $S$. 
          The conformal isomorphism between $\phi^{-1}(T)$ and $S$ extends 
          continuously to $I(T)$, and thus $g$ extends continuously to $I(T)$
          as a universal covering of $S^1$. By the Schwarz 
          reflection principle, the extension is analytic.
     \end{subproof}
     
   \begin{claim}[Claim~3]
     $W_0$ is a Jordan domain, and $\Theta' = \partial W_0\cap S^1=S^1\setminus\bigcup_T I(T)$. 
   \end{claim}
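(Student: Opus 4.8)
The plan is as follows. The equality $\partial W_0\cap S^1 = S^1\setminus\bigcup_T I(T)$ is immediate from~\eqref{eqn:W0boundary}, since each cross-cut $\gamma(T)$ lies in the open disc. One half of the equality $\Theta' = \partial W_0\cap S^1$ is equally quick: if $\zeta\in\Theta'$, then by definition some curve $\gamma\colon[0,1)\to D_0$ tending to infinity represents the corresponding access to infinity, and by Proposition~\ref{prop:correspondence} the curve $\phi^{-1}\circ\gamma\subset W_0$ lands at $\zeta$, so $\zeta\in\overline{W_0}\cap S^1=\partial W_0\cap S^1$. Thus the substance of the claim is (a) that $W_0$ is a Jordan domain, and (b) the reverse inclusion $\partial W_0\cap S^1\subseteq\Theta'$.

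I would deduce (b) from (a). First, $\partial D_0\cap\C=\bigcup_T\Gamma(T)$: a finite point $p\in\overline{D_0}$ satisfies $f(p)\in\overline D\subset V$, so $p$ lies in the open set $f^{-1}(V)$, and a neighbourhood of $p$ then lies in $f^{-1}(V)$ and meets $U$, forcing $p\in U$; hence $p\in D_0\cup\bigcup_T\Gamma(T)$. Assuming $W_0$ is a Jordan domain, Carathéodory's theorem extends the Riemann map $\D\to W_0$ to a homeomorphism of the closed discs, so each $\zeta\in\partial W_0\cap S^1$ is the landing point of a curve $\delta\colon[0,1)\to W_0$. The curve $\phi\circ\delta$ lies in $D_0$ and cannot accumulate at a finite point $p$, because $p\in D_0\cup\bigcup_T\Gamma(T)$ and $\phi^{-1}$ is continuous there, which would force $\delta$ to converge into the interior of $\D$; so $\phi\circ\delta$ tends to infinity in $D_0$, and Proposition~\ref{prop:correspondence} identifies its access with $\zeta$. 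Hence $\zeta\in\Theta'$.

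For (a), recall from~\eqref{eqn:W0boundary} that $\partial W_0$ is the union of the pairwise disjoint cross-cuts $\gamma(T)=\phi^{-1}(\Gamma(T))$ and the compact set $K\defeq S^1\setminus\bigcup_T I(T)$. Since $f\colon T\to V\setminus\overline D$ is a universal covering of an annulus, so is $g\colon\phi^{-1}(T)\to\D\setminus\overline W$; hence $\phi^{-1}(T)$ is conformally a bi-infinite strip, its closure in $\overline\D$ is a closed topological disc whose boundary is $\gamma(T)$ together with the closed arc $\overline{I(T)}$ of $S^1$, and in particular $\overline{\phi^{-1}(T)}\cap S^1=\overline{I(T)}$, so the closed arcs $\overline{I(T)}$ do not overlap. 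I would then prove $\partial W_0$ is a Jordan curve by exhibiting a continuous bijection $h\colon S^1\to\partial W_0$ which is the identity on $K$ and maps each complementary arc $\overline{I(T)}$ homeomorphically onto the arc $\overline{\gamma(T)}$ respecting the common endpoints; a continuous bijection out of the compact space $S^1$ is a homeomorphism, so $W_0$ is the Jordan domain bounded by $\partial W_0$.

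The one step I expect to require real care --- and the main obstacle --- is the continuity of $h$ at points of $K$ at which infinitely many of the arcs $\overline{I(T)}$ accumulate; this needs $\operatorname{diam}\gamma(T)\to 0$ along every enumeration of $\mathcal{T}$. I would obtain this by combining: (i) $\bigcup_T\gamma(T)=g^{-1}(\partial W)$ is a closed one-dimensional submanifold of $\D$ --- using that $\partial W=\psi^{-1}(\partial D)$ contains no critical value of $g$, which holds because $S(f)\cap V\subset D$ and $\overline D\subset V$ --- so only finitely many $\gamma(T)$ meet any disc $\{|z|\le r\}$, $r<1$; (ii) the arcs $I(T)$ are pairwise disjoint, whence $\sum_T|I(T)|\le 2\pi$ and $|I(T)|\to 0$; and (iii) $\overline{\phi^{-1}(T)}\cap S^1=\overline{I(T)}$. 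A cross-cut whose endpoints are close together on $S^1$ but whose diameter exceeds a fixed $\varepsilon$ must either reach into $\{|z|\le r\}$ for an $r=r(\varepsilon)<1$, which by (i) happens for only finitely many $T$, or else run close to $S^1$ along an arc of length comparable to $\varepsilon$, which by (iii) forces $\overline{I(T)}$ to have comparable diameter, which by (ii) happens for only finitely many $T$; hence only finitely many $\gamma(T)$ have diameter exceeding $\varepsilon$. This yields the required shrinking and completes the proof of~(a). (Combining Claim~3 with Claims~1 and~2, and using that $\Theta\supseteq K$ has measure zero so that $\bigcup_T I(T)$ is dense in $S^1$, then gives $X=K=\Theta'$, which is Theorem~\ref{theo:tracts}.)
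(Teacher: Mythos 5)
Your overall architecture coincides with the paper's: the same map $\rho=h$ (identity on $S^1\setminus\bigcup_T I(T)$, sending $\overline{I(T)}$ onto $\overline{\gamma(T)}$), the same reduction of continuity to a shrinking statement for the cross-cuts, and the same accessibility argument for $\partial W_0\cap S^1\subseteq\Theta'$ once the Jordan property is known. You also correctly isolate the one delicate point. But your proposed proof of that point — that only finitely many $\gamma(T)$ have diameter exceeding a given $\varepsilon$ — has a genuine gap. The dichotomy ``either $\gamma(T)$ enters $\{|z|\le r\}$, or it runs close to $S^1$ over an arc of angular length comparable to $\varepsilon$ and then (iii) forces $|I(T)|\gtrsim\varepsilon$'' fails in the second branch. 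Fact (iii) says $\overline{\phi^{-1}(T)}\cap S^1=\overline{I(T)}$, but it does not say that points of $S^1$ lying close to $\gamma(T)$ (or to $\phi^{-1}(T)$) are close to $\overline{I(T)}$. A hairpin-shaped cross-cut — one that leaves $a(T)$, travels within distance $1-r$ of $S^1$ through an angular sector of length $\varepsilon$, turns around, and returns to $b(T)$ next to $a(T)$ — has diameter about $\varepsilon$, avoids $\{|z|\le r\}$, and bounds a thin sliver $\phi^{-1}(T)$ whose closure meets $S^1$ only in the short arc $\overline{I(T)}$, so (iii) is satisfied with $|I(T)|$ arbitrarily small. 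Infinitely many pairwise disjoint such hairpins, stacked towards $S^1$ and accumulating only on an arc of the circle, are consistent with (i) (the union is still a properly embedded $1$-submanifold of $\D$), with (ii), and with (iii). So (i)--(iii) alone do not yield $\operatorname{diam}\gamma(T)\to 0$; such configurations are in fact impossible here, but ruling them out from inside the disc tends to require exactly the accessibility/measure-zero information you are trying to establish, which would be circular.

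The paper closes this gap by going back to the plane rather than staying in $\D$: given $\zeta\in S^1$ and $\varepsilon>0$, Carath\'eodory's theory provides a cross-cut $C\subset D(\zeta,\varepsilon)$ separating $\zeta$ from $0$ such that $\phi(C)$ is a cross-cut of $U$ with \emph{finite} endpoints, hence bounded; since $\partial D_0\cap\C$ is locally an arc, only finitely many tract boundaries $\partial T$ can meet the bounded set $\phi(C)$, so all but finitely many of the relevant $\gamma(T_n)$ are disjoint from $C$ and therefore trapped, together with their endpoints $a(T_n)\to\zeta$, inside $D(\zeta,\varepsilon)$. It is the planar separation of the tracts (not any intrinsic property of the cross-cuts in the disc) that kills the hairpins, and this is the ingredient your argument is missing. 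Everything else in your write-up is sound and matches the paper's proof.
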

   \begin{subproof}       
     Recall that the second equality holds by~\eqref{eqn:W0boundary}.
     
     Define $\rho\colon S^1\to \partial W_0$ as follows. On each
        $I(T)$, the map is a homeomorphism 
        $\rho\colon I(T) \to \gamma(T)$, fixing the endpoints $a(T)$ and $b(T)$.
       Outside these intervals, i.e.\ 
        on $\partial W_0\cap S^1$, $\rho$ agrees with the identity. 
        (Note that, by Claim~2, each $I(T)$ is
        a non-degenerate interval, so such $\rho$ does indeed exist.) 
        
        Clearly $\rho$ is injective; it is surjective 
        by~\eqref{eqn:W0boundary}. 
         By definition, the restriction $\rho|_{\overline{I(T)}}$ is continuous for any $T\in\mathcal{T}$, as is $\rho|_{\partial W_0\cap S^1}$. So let 
           $(T_n)_{n=1}^{\infty}$ be a sequence of pairwise different elements of $\mathcal{T}$, and suppose that 
           $\zeta_n\in I(T_n)$ is a sequence converging to $\zeta \in S^1$. To establish continuity of $\rho$, we must prove that 
           $\rho(\zeta_n)\to \zeta$. 
           
          Since the $I(T_n)$ are pairwise disjoint, their lengths tend to zero and $a(T_n)\to \zeta$. 
            Let $\varepsilon>0$. By the Carath\'eodory prime end correspondence, 
            there is a cross-cut $C$ of $\D$, contained in the Euclidean disc 
           $D(\zeta,\varepsilon)$ of radius $\varepsilon$
            around $\zeta$, such that $C$ separates $\zeta$ from $0$ and $\phi(C)$ 
        is a cross-cut of $U$ with finite endpoints
        \cite[Theorem~2.15]{Pommerenke}. 
        Since $\partial D_0\cap\C$ is locally an arc, and $\phi(C)$ is bounded, 
        $\partial T$ intersects $\phi(C)$
        only for finitely many $T$. (Compare \cite[Lemma~2.1]{dreadlocks}.) So, for sufficiently large $n$, $\gamma(T_n)$ is disjoint from $C$, and 
        $C$ separates $a(T_n)$ from $\partial D(\zeta,\varepsilon)$ in $\D$. So $\rho(\zeta_n)\in \gamma(T_n) \subset D(\zeta,\varepsilon)$. We have proved that 
        $\rho(\zeta_n)\to \zeta$, and hence that $\rho$ is continuous. Thus
        $\rho(S^1)=\partial W_0$ is a Jordan curve.
        
       Clearly no $I(T)$ intersects $\Theta'$, and hence $\Theta'\subset \partial W_0\cap S^1$. On the
         other hand, every $\zeta\in \partial W_0\cap S^1$ is accessible from
         $W_0$ by the first part of the claim. If $\gamma \subset W_0$ is a curve tending
         to $\zeta$, then $\phi(\zeta)$ must tend to 
         $\hat{\partial} U \cap \hat{\partial} D_0 = \{\infty\}$ 
         (where $\hat{\partial}$ denotes
         the boundary in the Riemann sphere $\hat{\C}$). Hence $\zeta\in \Theta'$
         by Proposition~\ref{prop:correspondence}, as
         required. 
       \end{subproof}

     Since $\Theta'\subset \Theta$ is totally disconnected, it follows from Claim~3
      that the set $\bigcup_T \{a(T),b(T)\}$ of endpoints of the intervals
      $I(T)$ is dense in $\Theta'$. In particular, by Claim~1, 
      $\Theta'\subset X$, and $X\subset \Theta'$ by Claims~2 and~3. 
      We have established that $X=\Theta'$. 
     
  By Proposition~\ref{prop:correspondence}, the set 
    $\Theta'$ is in one-to-one cyclic-order-preserving correspondence with
    the set of accesses to infinity in $D_0$. Thus we have proved the first
    claim of the theorem. Moreover, clearly $\Theta' = S^1\setminus \bigcup_T I(T)$ is
    finite if and only if $\mathcal{T}$ is finite. If this is the case,
    the map $T\mapsto a(T)$ defines a bijection between $\mathcal{T}$ and
    $\Theta'$. This completes the proof.
    \end{proof}
\begin{proof}[Proof of Corollary~\ref{corr:tracts}]
Suppose that $f \in \B$, that ${S(f)} \subset F(f)$, and that $U$ is an unbounded forward-invariant Fatou component of $f$. Choose a point $w \in U$, and let $D \subset U$ be a hyperbolic disc, centred at $w$, of sufficiently large hyperbolic radius that $S(f) \cap U \subset D$; this is possible since $S(f)$ is compact and does not meet $\partial U$.

It follows from Theorem~\ref{theo:tracts} that the number of singularities of an associated inner function is equal to the number of components of $U \setminus f^{-1}(\overline{D})$. Since $\overline{D} \subset U$, each component of $U \setminus f^{-1}(\overline{D})$ is contained in exactly one component of $\C \setminus (U \cap f^{-1}(\overline{D}))$, so it suffices to count the components of this latter set.
 
Now let $D' \supset D$ be a bounded Jordan domain containing $S(f)$, so that the tracts of $f$ are the components of $\C \setminus f^{-1}(\overline{D'})$. Since $D' \supset D$, no tract can meet more than one component of $\C \setminus (U \cap f^{-1}(\overline{D}))$. However, each component of $\C \setminus (U \cap f^{-1}(\overline{D}))$ meets at least one tract. This completes the proof.
\end{proof}
%
%
%
%

\section{Bounded-degree inner functions}
  In this section, we prove Theorems~\ref{theo:finiteblaschke} and~\ref{theo:wandering}. The results are proved by a standard type of 
   \emph{quasiconformal surgery} \cite[Section~4.2]{brannerfagellasurgery}, 
   which is analogous to the well-known proof of the straightening theorem
   for polynomial-like mappings \cite{douadyhubbardpolynomiallike}. 
   Throughout this section, and only in this section, we shall use
   without comment the standard notions and techniques of 
   quasiconformal surgery, as explained for example in~\cite{brannerfagellasurgery}.

   We begin with Theorem~\ref{theo:wandering}, where the
   surgery takes a particularly simple form. We shall require the following result,
   which establishes the existence of a suitable function on which to perform the surgery.
   
\begin{figure}
   \subfloat[$f(z)=-z^2 \cdot \exp\bigl(p(z)-c\bigr)$]{\fbox{\def\svgwidth{.47\textwidth}%
\begingroup%
  \makeatletter%
  \providecommand\color[2][]{%
    \errmessage{(Inkscape) Color is used for the text in Inkscape, but the package 'color.sty' is not loaded}%
    \renewcommand\color[2][]{}%
  }%
  \providecommand\transparent[1]{%
    \errmessage{(Inkscape) Transparency is used (non-zero) for the text in Inkscape, but the package 'transparent.sty' is not loaded}%
    \renewcommand\transparent[1]{}%
  }%
  \providecommand\rotatebox[2]{#2}%
  \newcommand*\fsize{\dimexpr\f@size pt\relax}%
  \newcommand*\lineheight[1]{\fontsize{\fsize}{#1\fsize}\selectfont}%
  \ifx\svgwidth\undefined%
    \setlength{\unitlength}{14173.22834646bp}%
    \ifx\svgscale\undefined%
      \relax%
    \else%
      \setlength{\unitlength}{\unitlength * \real{\svgscale}}%
    \fi%
  \else%
    \setlength{\unitlength}{\svgwidth}%
  \fi%
  \global\let\svgwidth\undefined%
  \global\let\svgscale\undefined%
  \makeatother%
  \begin{picture}(1,1)%
    \lineheight{1}%
    \setlength\tabcolsep{0pt}%
    \put(0,0){\includegraphics[width=\unitlength,page=1]{}}%
    \put(0.3799241,0.45254216){\color[rgb]{0,0,0}\makebox(0,0)[lt]{\lineheight{1.25}\smash{\begin{tabular}[t]{l}$-1$\end{tabular}}}}%
  \end{picture}%
\endgroup%
}}\hfill
   \subfloat[$h(w) = 2w + p(\exp(w)) - c + \pi i$]{\fbox{\def\svgwidth{.47\textwidth}%
\begingroup%
  \makeatletter%
  \providecommand\color[2][]{%
    \errmessage{(Inkscape) Color is used for the text in Inkscape, but the package 'color.sty' is not loaded}%
    \renewcommand\color[2][]{}%
  }%
  \providecommand\transparent[1]{%
    \errmessage{(Inkscape) Transparency is used (non-zero) for the text in Inkscape, but the package 'transparent.sty' is not loaded}%
    \renewcommand\transparent[1]{}%
  }%
  \providecommand\rotatebox[2]{#2}%
  \newcommand*\fsize{\dimexpr\f@size pt\relax}%
  \newcommand*\lineheight[1]{\fontsize{\fsize}{#1\fsize}\selectfont}%
  \ifx\svgwidth\undefined%
    \setlength{\unitlength}{14173.22834646bp}%
    \ifx\svgscale\undefined%
      \relax%
    \else%
      \setlength{\unitlength}{\unitlength * \real{\svgscale}}%
    \fi%
  \else%
    \setlength{\unitlength}{\svgwidth}%
  \fi%
  \global\let\svgwidth\undefined%
  \global\let\svgscale\undefined%
  \makeatother%
  \begin{picture}(1,1)%
    \lineheight{1}%
    \setlength\tabcolsep{0pt}%
    \put(0,0){\includegraphics[width=\unitlength,page=1]{}}%
    \put(0.53126992,0.20283876){\color[rgb]{0,0,0}\makebox(0,0)[rt]{\lineheight{1.25}\smash{\begin{tabular}[t]{r}$-\pi i$\end{tabular}}}}%
    \put(0.52484824,0.70283877){\color[rgb]{0,0,0}\makebox(0,0)[rt]{\lineheight{1.25}\smash{\begin{tabular}[t]{r}$\pi i$\end{tabular}}}}%
  \end{picture}%
\endgroup%
}}
   \caption{\label{fig:wandering}The function $f$ defined in~\eqref{eqn:Cstarmap} and its lift $h$,
     illustrating Proposition~\ref{prop:wandering} in the case $d=2$. The Julia sets are shown in black and the basin of $-1$,
     respectively its preimage under the exponential map, is shown in grey. Critical points are marked by asterisks.}
\end{figure}

 \begin{proposition}\label{prop:wandering}
  Let $d\geq 2$. Then there exists an entire function $h$ having a simply-connected wandering domain $W$ such that, for all $n\geq 0$,
   $h^n(W)$ is a Jordan domain and 
  $h\colon h^n(W)\to h^{n+1}(W)$ is a proper map of degree $d$. 
 \end{proposition}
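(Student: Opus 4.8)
The plan is to obtain $h$ as a logarithmic lift of a hyperbolic entire function possessing a bounded, Jordan superattracting basin of valence $d$. I would start from
\[
f(z) \defeq -z^d e^{q(z)},
\]
where $q$ is the polynomial of degree $d-1$ determined by $z\,q'(z) = d\bigl((z+1)^{d-1}-1\bigr)$, with its constant term chosen so that $f(-1)=-1$. Then $f'(z) = -d\, z^{d-1}(z+1)^{d-1}e^{q(z)}$, so $f$ has precisely the two critical points $0$ and $-1$, each of local degree $d$, and both are superattracting fixed points of $f$; since the only other singular value is the asymptotic value $0$ (attained where $\Re q\to-\infty$), we have $S(f)=\{0,-1\}$, so $f$ is a hyperbolic map of finite type. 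Write $A$ for the immediate basin of $-1$. Because $0$ lies in the distinct basin of $0$ we have $0\notin\overline A$, and because $A$ contains no critical point other than $-1$, Böttcher's theorem furnishes a conformal isomorphism $A\to\D$ conjugating $f|_A$ to $z\mapsto z^d$; in particular $f\colon A\to A$ is a proper map of degree $d$.

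The heart of the proof, and the step I expect to be the main obstacle, is to show that $A$ is a \emph{bounded} Jordan domain. Given the explicit form of $f$, boundedness should follow from an analysis of the dynamics near $\infty$ --- where the plane splits into tracts over $\infty$, on which orbits escape and hence lie in $I(f)\subseteq J(f)$ since $f\in\B$, together with regions mapped into the basin of $0$ --- combined with the finite valence of $A$; the Jordan property then results from the usual expansion argument for hyperbolic maps, namely that the Böttcher coordinate extends continuously to $\overline\D$ because $f$ is uniformly expanding in the hyperbolic metric of $\C\setminus(\overline{U_0}\cup\overline{U_{-1}})$ on the compact set $\partial A\subset J(f)$, where $U_0\ni 0$ and $U_{-1}\ni-1$ are small forward-invariant discs with $S(f)\subset U_0\cup U_{-1}$. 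If a self-contained argument proves cumbersome here, one may instead construct a suitable $f$ by quasiconformal surgery --- grafting $z\mapsto z^d$ onto an exponential-type map so that the relevant basin is a quasidisc by construction --- or quote the structure theory of hyperbolic entire functions of finite type.

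Finally I would pass to the lift. Since $f^{-1}(0)=\{0\}$, $f$ restricts to a holomorphic self-map of $\C^*\defeq\C\setminus\{0\}$, and $h(w)\defeq d\,w+q(e^w)+\pi i$ is a transcendental entire function with $\exp\circ h=f\circ\exp$. As $\overline A$ is a compact, simply connected subset of $\C^*$ that does not separate $0$ from $\infty$, the covering $\exp\colon\exp^{-1}(\overline A)\to\overline A$ is trivial, so $\exp^{-1}(A)=\bigsqcup_{k\in\Z}W_k$ with the $W_k$ pairwise disjoint Jordan domains satisfying $W_{k+1}=W_k+2\pi i$, each mapped conformally onto $A$ by $\exp$; conjugating through these isomorphisms, $h$ maps each $W_k$ onto a single sheet $W_{\sigma(k)}$ by a proper map of degree $d$, being conjugate to $f|_A$. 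From $h(w+2\pi i)=h(w)+2\pi i d$ one gets $\sigma(k)=\sigma(0)+dk$, so $\sigma$, as a self-map of $\Z$, has at most one fixed point $j_0$, and for any $k\neq j_0$ the relation $\sigma^n(k)-j_0=d^n(k-j_0)$ shows that $W_k,W_{\sigma(k)},W_{\sigma^2(k)},\dots$ are pairwise distinct. Fixing such a $k$, the identity $\exp\circ h^n=f^n\circ\exp$ together with $f^n|_A\to-1$ gives $h^n\to\infty$ locally uniformly on $W_k$, so $W_k\subset F(h)$; moreover, since $\partial A\subset J(f)$ is forward-invariant and $f$ is uniformly expanding there, $h$ is likewise expanding along $\exp^{-1}(\partial A)$, whence $\partial W_k\subset J(h)$ and $W_k$ is a genuine Fatou component. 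Then $W\defeq W_k$ is a simply-connected wandering domain, $h^n(W)=W_{\sigma^n(k)}$ is a Jordan domain for every $n\geq 0$, and $h\colon h^n(W)\to h^{n+1}(W)$ is proper of degree $d$, as required.
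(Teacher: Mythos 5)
Your construction is essentially the paper's: a self-map of $\C^*$ of the form $-z^{\bullet}e^{\text{poly}(z)}$ with superattracting fixed points at $0$ and $-1$ and no other singular values, whose immediate basin $A$ of $-1$ is a bounded Jordan domain carrying a proper degree-$d$ self-map, lifted under $\exp$ to an orbit of wandering Jordan domains; the bookkeeping via $\sigma(k)=\sigma(0)+dk$ is a correct substitute for the paper's tracking of the points $(2^k-1)\pi i$. On the Jordan/boundedness of $A$: your expansion sketch is not by itself a proof (boundedness of Fatou components of hyperbolic entire functions is a genuine theorem, not a routine estimate near $\infty$), but you rightly defer to the structure theory; the precise citation is \cite[Theorem~1.10]{BFR}, which is what the paper invokes. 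A cosmetic point: for odd $d$ your normalisation $f(-1)=-1$ forces $e^{q(-1)}=-1$, so the constant term of $q$ must be taken non-real; the paper's choice $-z^2e^{p(z)-p(-1)}$ sidesteps this.

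The one genuine gap is your justification that $W_k$ is a \emph{full} Fatou component. You argue that $f$ is uniformly expanding on the forward-invariant set $\partial A$, hence $h$ is ``likewise expanding along $\exp^{-1}(\partial A)$, whence $\partial W_k\subset J(h)$''. The expansion itself is correct (from $\exp\circ h^n=f^n\circ\exp$ one gets $\lvert (h^n)'(w)\rvert\asymp\lvert (f^n)'(e^w)\rvert\to\infty$ on $\exp^{-1}(\partial A)$, since $\lvert e^{h^n(w)}\rvert$ stays in a compact subinterval of $(0,\infty)$), but the inference is not: unbounded derivative growth along an orbit does not place a point in the Julia set. Indeed the relevant boundary orbits escape to infinity (bounded real part, imaginary part driven off by the factor $d$ in the deck action), and escaping orbits with exploding derivatives are perfectly compatible with membership in the Fatou set --- your own wandering domains $W_k$ are escaping with $\lvert(h^n)'\rvert\to\infty$ on them. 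The clean way to close this is Bergweiler's semiconjugacy theorem \cite{waltersemiconjugacy}: if $\exp\circ h=f\circ\exp$ with $f$ an analytic self-map of $\C^*$, then $\exp(J(h))=J(f)$, so $F(h)=\exp^{-1}(F(f))$ and the $W_k$ are precisely the components of $F(h)$ over $A$. This is exactly how the paper concludes; with that substitution your argument is complete.
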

\begin{proof}
 We use the well-known
   method of obtaining wandering domains by lifting invariant components
   of a self-map of $\C^* = \C\setminus\{0\}$. Compare \cite[p.~106]{hermanexemples}
   and \cite[p.~414]{sullivanqcdynamicI} for early examples,
    and \cite[p.~3]{classifyingwandering} for a general description of this method. 
    
 Define 
    \begin{align}
         p(z) &\defeq 2\cdot \sum_{j=1}^{d-1}  \binom{d-1}{j} \cdot \frac{z^{j}}{j} \quad\text{and}\notag \\
            f(z) &\defeq -z^2 \cdot \exp\bigl(p(z)-c\bigr), \label{eqn:Cstarmap} \end{align}
    where $c\defeq p(-1)$. 
  Then $f(0)=0$, $f(-1)=-1$, and 
     \begin{align*} f'(z) &= -\exp\bigl(p(z)-c\bigr)\cdot (2z + z^2p'(z)) \\ &=
            -\exp\bigl(p(z)-c\bigr)\cdot 2z\cdot \left(1 + \sum_{j=1}^{d-1} \binom{d-1}{j} \cdot z^j\right) \\ &= 
            - \exp\bigl(p(z)-c\bigr)\cdot 2z\cdot(z+1)^{d-1}.\end{align*}
      Hence $f$ has
   super-attracting fixed points at $0$ and $-1$, and no other critical points. The super-attracting fixed point $0$ is also the
   only asymptotic value of $f$.  
Let $U$ be the Fatou component of $f$ containing $-1$. Since $U$ is a super-attracting basin containing no singular values
other than the fixed point, it follows that 
     $f\colon U\to U$ has degree $d$.
     By~\cite[Theorem~1.10]{BFR}, $U$ is
     a bounded Jordan domain (in fact, a quasidisc). See Figure~\ref{fig:wandering}. 

  Observe that $f^{-1}(0)=\{0\}$. Let  $h$ be the lift of $f$ under $z=\exp(w)$ defined by
    \[ h(w) \defeq 2w + p(\exp(w)) - c + \pi i. \]
    Observe that $h\bigl((2k-1)\pi i\bigr) = (4k-1)\pi i$ for $k\in\Z$.  
     Let $W_k$ be the connected component of $\exp^{-1}(U)$ 
     containing $(2^k-1)\pi i$. By \cite{waltersemiconjugacy}, we have
     $\exp(J(h)) = J(f)$, so $W_k$ is a Fatou component of $h$, with 
     $h(W_k)\subset W_{k+1}$. 
     Since $U$ is a Jordan domain and is
     mapped to itself as a proper map of degree $d$, it follows that
     $W=W_1$ has the desired properties.
     \end{proof} 

\begin{proof}[Proof of Theorem~\ref{theo:wandering}]
   Let $g$ be a finite Blaschke product of degree $d\geq 2$, and let $h$ and $W$ be as in
     Proposition~\ref{prop:wandering}. Let $\tilde{g}$ be the Blaschke product
     associated to $h\colon W\to f(W)$, say $\tilde{g} = \phi_1\circ h \circ \phi_0^{-1}$ with Riemann maps $\phi_0\colon W\to\D$ and $\phi_1\colon f(W)\to \D$. 

 Restricted to $S^1$, both $g$ and $\tilde{g}$ are analytic covering maps of degree $d$, and therefore there is an analytic map 
     $\theta\colon S^1\to S^1$ such that 
        \begin{equation}\label{eqn:gfunctionalrelation} g\circ \theta = \tilde{g}.
        \end{equation} In particular, $\theta$ is quasisymmetric, and can therefore be extended to a quasiconformal
    homeomorphism $\theta\colon \D\to\D$. Define a quasiregular map $\tilde{f}\colon\C\to\C$ by 
      \[ \tilde{f}(z) \defeq \begin{cases} h(z) & \text{if } z\notin W \\
                                                    \phi_1^{-1}(g(\theta(\phi_0(z)))) &\text{if }z\in W. \end{cases}.   \] 
     Since $\partial W$ and $\partial f(W)$ are Jordan curves, 
      the maps $\phi_0$ and $\phi_1$ extend homeomorphically to the
      boundary. By~\eqref{eqn:gfunctionalrelation}, $\tilde{f}$ is continuous at points
        of $\partial W$. By the Bers gluing lemma, it follows that
        $\tilde{f}$ is indeed quasiregular
         on $\C$. 

Let $\mu$ be the dilatation of
                $\theta\circ\phi_0$, which is a Beltrami differential 
                on $U$. Extend $\mu$ to $\C$ as follows:
                If $V$ is a component of the Fatou set of $h$ such that 
                $h^n(V)=W$ for $n>0$, let 
                $\mu|_V$ be the pull-back of $\mu|_{W}$ under $h^n$. On the 
                complement of the backward orbit of $W$, set $\mu=0$. Then $\mu$ is
                invariant under $\tilde{f}$. 

          Apply the measurable Riemann mapping theorem to obtain
             a quasiconformal map $\psi\colon\C\to\C$ whose dilatation is
             $\mu$; then 
               \[ f\defeq \psi\circ \tilde{f}\circ \psi^{-1} \]
               is an entire function. Moreover, consider $U\defeq \psi(W)$ and $V \defeq \psi(h(W))=f(U)$. Then
               \begin{align*} \Phi_0 &\defeq    \theta\circ \phi_0\circ \psi^{-1}               
                   \colon W \to \D \qquad\text{and} \\ 
                               \Phi_1 &\defeq \phi_1\circ \psi^{-1} \colon V\to \D \end{align*}
                are conformal isomorphisms and 
                   \[ \Phi_1 \circ f = \phi_1 \circ \tilde{f}\circ \psi^{-1} = g \circ \theta \circ \phi_0 \circ \psi^{-1} = g \circ \Phi_0 \]
        on $W$.
              So $g$ is an associated inner function of $f\colon U\to V$. 
\end{proof}

The proof of Theorem~\ref{theo:finiteblaschke} is similar. As with 
   Proposition~\ref{prop:wandering}, we should first establish
   the existence of
   suitable subjects for our surgery. (See Figure~\ref{fig:invariantfatou}.)
   
 \begin{figure}
   \subfloat[$\alpha_2$]{\fbox{\def\svgwidth{.47\textwidth}%
\begingroup%
  \makeatletter%
  \providecommand\color[2][]{%
    \errmessage{(Inkscape) Color is used for the text in Inkscape, but the package 'color.sty' is not loaded}%
    \renewcommand\color[2][]{}%
  }%
  \providecommand\transparent[1]{%
    \errmessage{(Inkscape) Transparency is used (non-zero) for the text in Inkscape, but the package 'transparent.sty' is not loaded}%
    \renewcommand\transparent[1]{}%
  }%
  \providecommand\rotatebox[2]{#2}%
  \newcommand*\fsize{\dimexpr\f@size pt\relax}%
  \newcommand*\lineheight[1]{\fontsize{\fsize}{#1\fsize}\selectfont}%
  \ifx\svgwidth\undefined%
    \setlength{\unitlength}{14173.22834646bp}%
    \ifx\svgscale\undefined%
      \relax%
    \else%
      \setlength{\unitlength}{\unitlength * \real{\svgscale}}%
    \fi%
  \else%
    \setlength{\unitlength}{\svgwidth}%
  \fi%
  \global\let\svgwidth\undefined%
  \global\let\svgscale\undefined%
  \makeatother%
  \begin{picture}(1,1.00000039)%
    \lineheight{1}%
    \setlength\tabcolsep{0pt}%
    \put(0,0){\includegraphics[width=\unitlength,page=1]{}}%
    \put(0.71274863,0.48773096){\color[rgb]{0,0,0}\makebox(0,0)[lt]{\lineheight{1.25}\smash{\begin{tabular}[t]{l}$1$\end{tabular}}}}%
    \put(0.51076177,0.48773096){\color[rgb]{0,0,0}\makebox(0,0)[lt]{\lineheight{1.25}\smash{\begin{tabular}[t]{l}$0$\end{tabular}}}}%
  \end{picture}%
\endgroup%
}}\hfill
   \subfloat[$\rho_2$]{\fbox{\def\svgwidth{.47\textwidth}%
\begingroup%
  \makeatletter%
  \providecommand\color[2][]{%
    \errmessage{(Inkscape) Color is used for the text in Inkscape, but the package 'color.sty' is not loaded}%
    \renewcommand\color[2][]{}%
  }%
  \providecommand\transparent[1]{%
    \errmessage{(Inkscape) Transparency is used (non-zero) for the text in Inkscape, but the package 'transparent.sty' is not loaded}%
    \renewcommand\transparent[1]{}%
  }%
  \providecommand\rotatebox[2]{#2}%
  \newcommand*\fsize{\dimexpr\f@size pt\relax}%
  \newcommand*\lineheight[1]{\fontsize{\fsize}{#1\fsize}\selectfont}%
  \ifx\svgwidth\undefined%
    \setlength{\unitlength}{14173.22834646bp}%
    \ifx\svgscale\undefined%
      \relax%
    \else%
      \setlength{\unitlength}{\unitlength * \real{\svgscale}}%
    \fi%
  \else%
    \setlength{\unitlength}{\svgwidth}%
  \fi%
  \global\let\svgwidth\undefined%
  \global\let\svgscale\undefined%
  \makeatother%
  \begin{picture}(1,1)%
    \lineheight{1}%
    \setlength\tabcolsep{0pt}%
    \put(0,0){\includegraphics[width=\unitlength,page=1]{}}%
    \put(0.71274863,0.48373105){\color[rgb]{0,0,0}\makebox(0,0)[lt]{\lineheight{1.25}\smash{\begin{tabular}[t]{l}$1$\end{tabular}}}}%
  \end{picture}%
\endgroup%
}}
   \caption{\label{fig:invariantfatou}The functions $\alpha_2$ and $\rho_2$ from Proposition~\ref{prop:invariantfatouexistence}. Julia sets are shown in black, and the critical points $0$ and $1$ are marked by
    asterisks. The attracting basin of $0$ for $\alpha_2$ and the parabolic basin of $\rho_2$ are shown in grey.}
\end{figure}

 \begin{proposition}\label{prop:invariantfatouexistence}
    For every $d\geq 2$, there is an entire function $\alpha_d$ 
      having an invariant super-attracting Fatou component $W$ which 
      is a bounded Jordan domain, and such that $f\colon W\to W$ is a
      proper map of degree $d$. 
      
     Similarly, there is an entire function $\rho_d$ having an invariant parabolic Fatou 
     component $W$ which 
      is a bounded Jordan domain, and such that $f\colon W\to W$ is a
      proper map of degree $d$. 
 \end{proposition}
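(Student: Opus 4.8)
The plan is to prove each half of the proposition by an explicit construction, and then in both cases to read off the topological conclusion from the same input used in the proof of Proposition~\ref{prop:wandering}: the theorem of Bara\'nski--Fagella--Rippon \cite[Theorem~1.10]{BFR}, which asserts that an immediate attracting or parabolic basin of an entire function that contains only finitely many singular values is a bounded Jordan domain (indeed a quasidisc). Once the basin $W$ is known to be bounded and simply connected, the Riemann--Hurwitz formula applied to the proper map $W\to W$ shows that its degree equals $d$ precisely when $W$ contains exactly one critical point, of multiplicity $d-1$, and no asymptotic value. So the entire content is to exhibit functions whose combinatorics satisfy these bookkeeping conditions.

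For the super-attracting case no new construction is needed: the function $f(z)=-z^{2}\exp(p(z)-c)$ built in the proof of Proposition~\ref{prop:wandering} already has the required properties, and we simply set $\alpha_{d}\defeq f$ and let $W$ be the immediate basin of the super-attracting fixed point $-1$. As recorded there, $-1$ is a critical point of multiplicity $d-1$, this basin contains no other singular value (the sole finite asymptotic value, $0$, is trapped in the disjoint basin of the super-attracting fixed point $0$), and hence \cite[Theorem~1.10]{BFR} gives that $W$ is a bounded Jordan domain on which $f$ acts as a proper map of degree $d$. If one prefers a model whose distinguished fixed point is the origin, one may instead take $\alpha_{d}(z)\defeq z^{d}e^{q(z)}$ with the polynomial $q$ chosen so that $q(1)=0$ and $d+zq'(z)=d(1-z)^{d-1}$ (the right-hand side being divisible by $z$ after subtracting $d$, so $q$ is a genuine polynomial); then $0$ and $1$ are super-attracting fixed points of local degree $d$, the only finite asymptotic value is $0$, and one takes $W$ to be the immediate basin of $1$.

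For the parabolic case a separate construction is required, since a parabolic fixed point lies on the boundary of its basin and so cannot itself be the critical point producing the degree, and since we must also keep the (finite) asymptotic value out of $W$ — otherwise $W$ would be forced to be unbounded, as in the exponential family at a parabolic parameter. We therefore look for $\rho_{d}$ of the form $\rho_{d}(z)\defeq q+R(z)e^{Q(z)}$ with $R,Q$ polynomials, arranged so that: (i) $q$ is a super-attracting fixed point, i.e.\ $R(q)=R'(q)=0$, which then absorbs the unique finite asymptotic value $q$ of $\rho_{d}$; (ii) there is a point $z_{0}\neq q$ with $\rho_{d}(z_{0})=z_{0}$, $\rho_{d}'(z_{0})=1$ and nondegenerate quadratic term there, so that $z_{0}$ is a simple parabolic fixed point with a single attracting petal and hence a single immediate parabolic basin $W$; and (iii) $\rho_{d}'$ has a zero of multiplicity exactly $d-1$ at some point $z_{1}$, and no other zeros. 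Each of (i)--(iii) is a finite list of algebraic or linear conditions on the coefficients of $R$ and $Q$, so suitable polynomials exist once their degrees are large enough.

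It then remains to check that $z_{1}$ — the only critical point outside the basin of $q$ — in fact lies in the immediate parabolic basin $W$ of $z_{0}$, rather than in another component of the parabolic basin or in a different Fatou component; this is the one point that genuinely needs arranging, and I expect it, together with the single-petal verification in (ii), to be the main (if not deep) obstacle. Concretely one fixes the remaining free parameters so that the critical value $\rho_{d}(z_{1})$ lies well inside an attracting petal at $z_{0}$, which forces $z_{1}\in W$; alternatively one runs a continuity/normal-families argument inside the constructed family. Granting this, $W$ contains exactly one singular value (the critical value $\rho_{d}(z_{1})$) and no asymptotic value, so \cite[Theorem~1.10]{BFR} shows that $W$ is a bounded Jordan domain and the Riemann--Hurwitz count shows that $\rho_{d}\colon W\to W$ is proper of degree $d$, completing the proof.
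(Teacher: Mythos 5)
Your super-attracting half is correct and is essentially the paper's argument: the paper likewise observes that the function from Proposition~\ref{prop:wandering} already works (and, since the restriction to $\C\setminus\{0\}$ is no longer needed, writes down the even simpler formula $\alpha_d(z)=\bigl((1-\cos(\pi\sqrt{z}))/2\bigr)^d$), and then invokes \cite[Theorem~1.10]{BFR}.

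The parabolic half has two genuine problems. First, \cite[Theorem~1.10]{BFR} does not say what you attribute to it: it is a theorem about \emph{hyperbolic} entire functions, and a function with a parabolic cycle is never hyperbolic (the parabolic point lies in $J(f)$ and is an accumulation point of the postsingular set), so it cannot be used to show that the parabolic basin is a bounded Jordan domain. The paper instead verifies that its $\rho_d$ is \emph{strongly geometrically finite} and applies \cite[Theorem~1.8]{geometricallyfinite}. Second, and more seriously, the step you yourself identify as the main obstacle --- getting the multiplicity-$(d-1)$ critical point $z_1$ into the \emph{immediate} basin $W$ --- is not resolved by your proposed fix: placing the critical value $\rho_d(z_1)$ in an attracting petal only gives $\rho_d(z_1)\in W$, hence that $z_1$ lies in \emph{some} component of $\rho_d^{-1}(W)$, not that $z_1\in W$; and it is not established that the algebraic conditions (i)--(iii) leave enough freedom to control the position of the critical value relative to a petal in the first place. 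The paper sidesteps all of this with a one-real-parameter bifurcation: it sets $f_\lambda=(\alpha_d+\lambda)/(1+\lambda)$, so that the local degree $d$ critical point $0$ has a real, monotone orbit, and takes the transitional parameter $\lambda_0$ at which this orbit converges to a parabolic fixed point $p$; the interval $[0,p)$ is then forward invariant, lies in a single Fatou component containing an attracting petal, and so places $0$ in the immediate basin for free. (A smaller point: Riemann--Hurwitz yields the degree only once you know $\rho_d|_W$ is proper of finite valence; this follows from \cite[Proposition~2.8]{BFR} because the unique singular value in $W$ is a critical value with a critical preimage in $W$, but it needs to be said rather than assumed.)
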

 \begin{proof}
  A function $f$ with the properties required of $\alpha_d$ was already
    described in~\eqref{eqn:Cstarmap}, but since we do not require
    $\alpha_d$ to restrict to a self-map of $\C\setminus\{0\}$ here,
    we can also give simpler formulae, such as 
     \[ \alpha_d\colon \C\to\C; \qquad z\mapsto \left(\frac{1-\cos(\pi\sqrt{z})}{2}\right)^d. \]
      Then $S(\alpha_d)=\{0,1\}$, both $0$ and $1$ are super-attracting fixed points, and 
      $0$ is a degree $d$ critical point of $\alpha_d$. 
      
    Let $W$ be the connected component of $F(\alpha_d)$ containing $0$. 
       Then $W$ is simply connected, and 
       $\alpha_d\colon W\to W$ is a branched covering branched only over $0$.
       Since both singular values of $\alpha_d$ belong to (super-)attracting basins,
       the map $\alpha_d$ is hyperbolic in the sense of \cite{BFR}. 
       Again applying  
       \cite[Theorem~1.10]{BFR}, $U_0$ is a quasidisc.

       For $\lambda>0$, define 
         \begin{equation}\label{eqn:flambda} f_{\lambda}(z) \defeq  \frac{\alpha_d(z)+\lambda}{1+\lambda}.\end{equation}
          Then $0$ and $1$ are still critical points of $f_{\lambda}$, with
             critical values $\lambda/(1+\lambda)$ and $1$. Moreover,
             $f_{\lambda}$ is increasing on $[0,1]$. 
             It is easy to see that there 
             is $\lambda_0>0$ such that the orbit of $0$ converges to an attracting
             fixed point for $\lambda<\lambda_0$, to $1$ for $\lambda>\lambda_0$,
             and to a parabolic fixed point for $\lambda=\lambda_0$. (See Figure~\ref{fig:bifurcation-flambda}.) Set
             $\rho_d\defeq f_{\lambda_0}$, and let $W$ be the Fatou component 
             containing $0$. Then $\rho_d\colon W\to W$ is a degree $d$ proper map.
             Moreover, since all singular values belong to attracting or parabolic basins,
             $\rho_d$ is \emph{strongly geometrically finite} in the sense of 
             \cite{geometricallyfinite}. By \cite[Theorem~1.8]{geometricallyfinite},
              $W$ is again a bounded Jordan domain.    
 \end{proof}
\begin{figure}
  \subfloat[$\lambda = 0$]{\includegraphics[width=.3\textwidth]{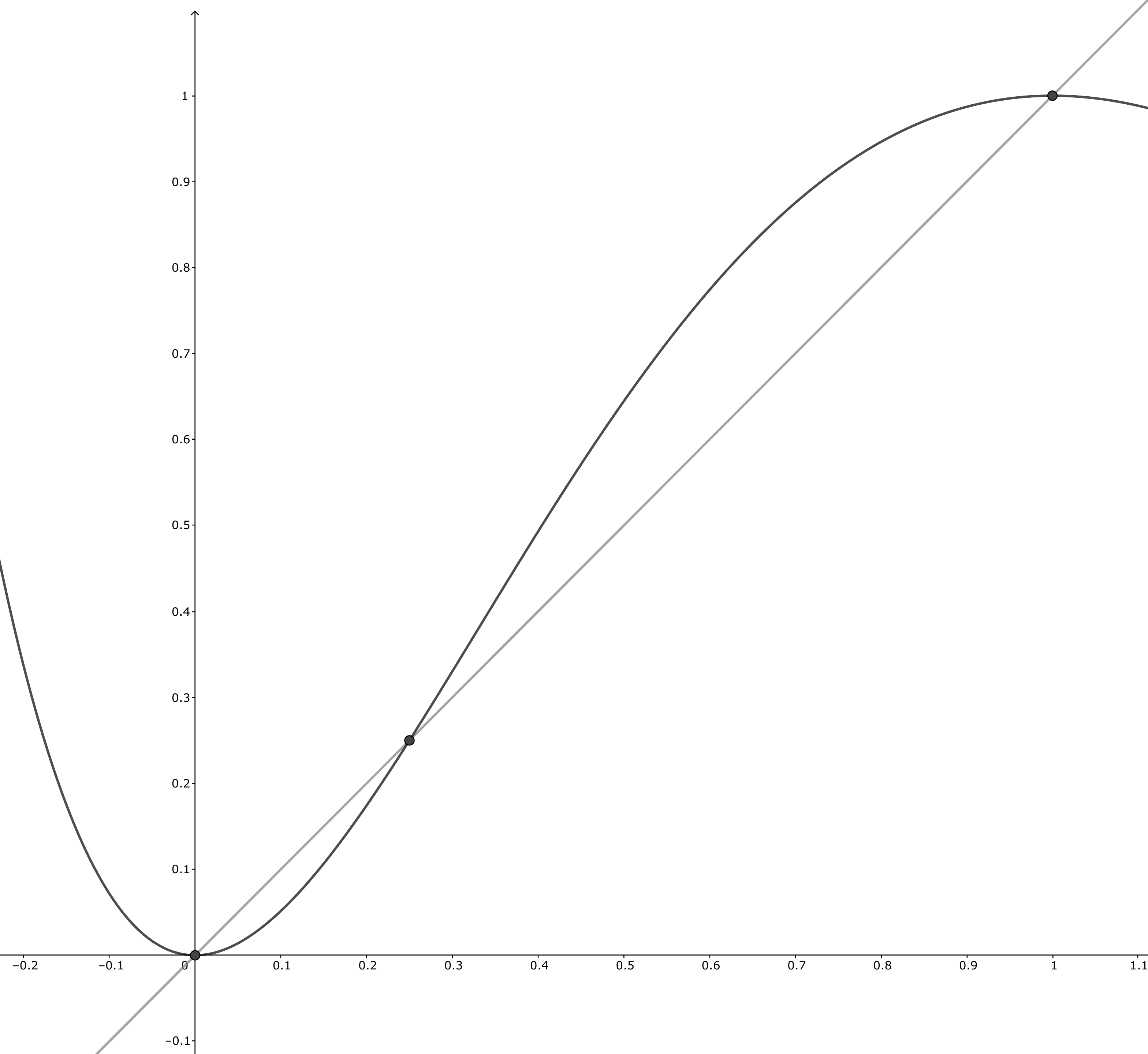}}\hfill%
  \subfloat[$\lambda = \lambda_0$]{\includegraphics[width=.3\textwidth]{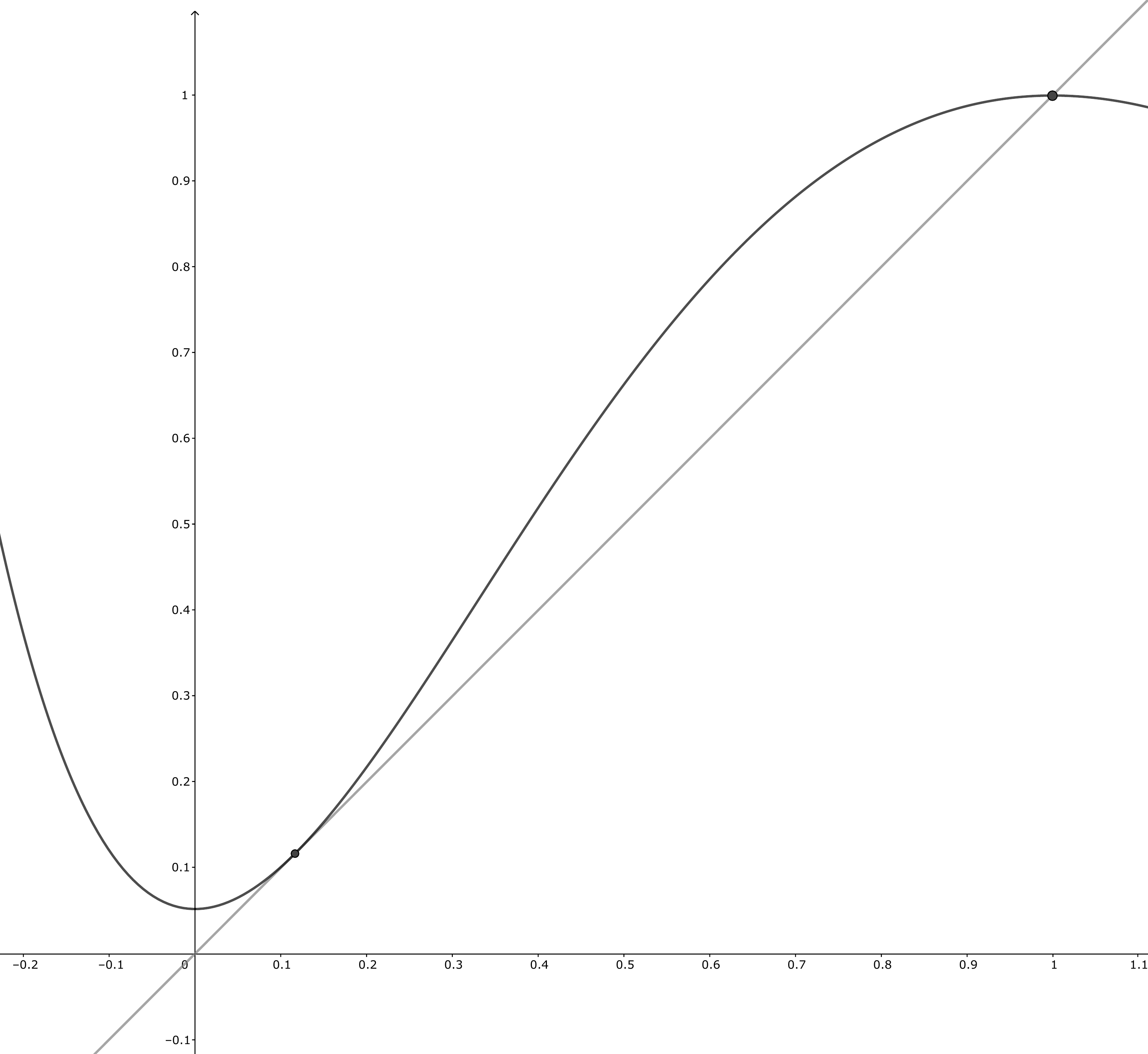}}\hfill%
  \subfloat[$\lambda < \lambda_0$]{\includegraphics[width=.3\textwidth]{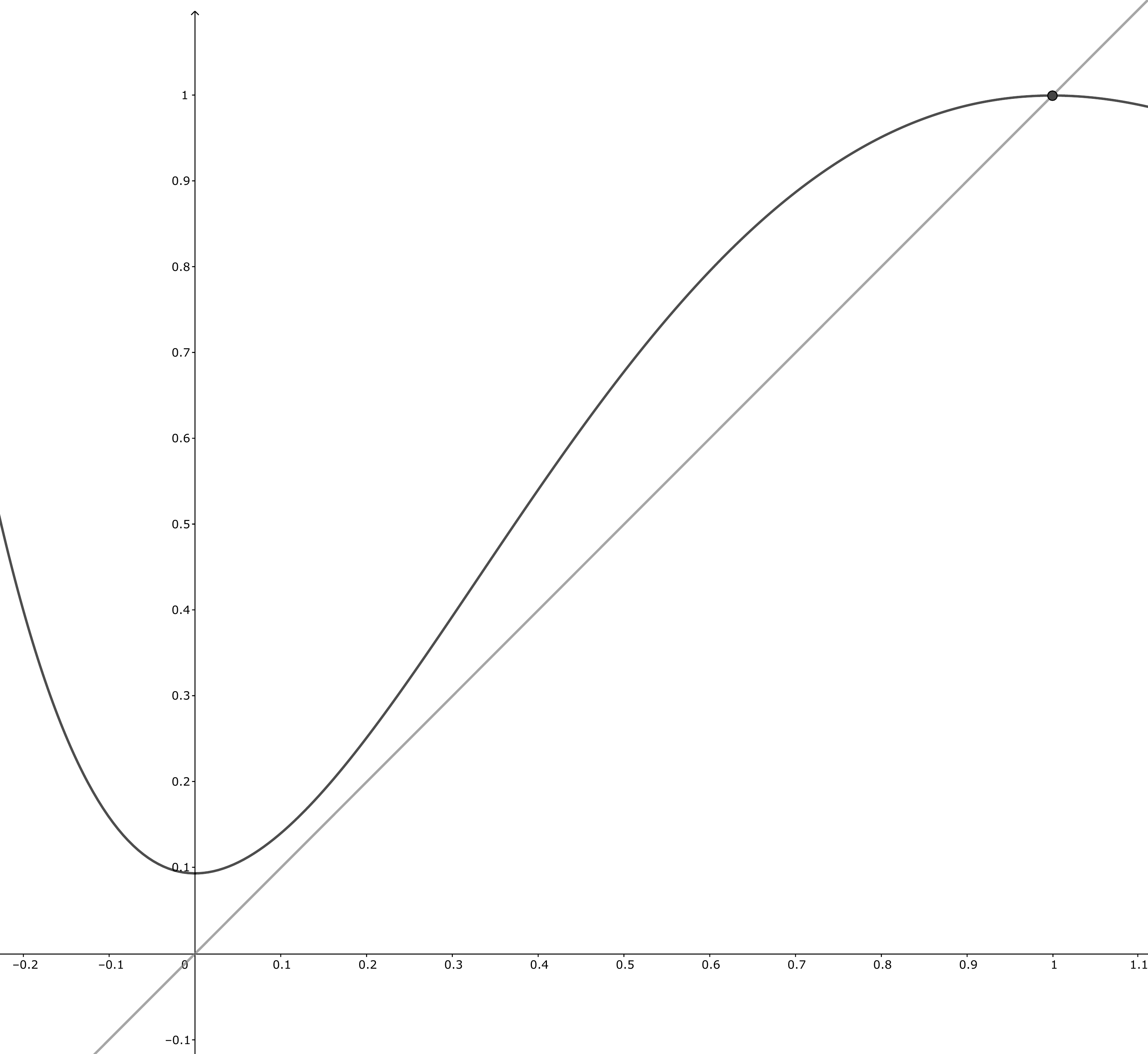}}
  \caption{\label{fig:bifurcation-flambda}The graph of $f_{\lambda}$, as defined in~\eqref{eqn:flambda}, in the case $d=2$. We have $f_0 = \alpha_2$ and $f_{\lambda_0} = \rho_2$. Here
     $\lambda_0\approx 0.0548$.}
\end{figure}

 We now divide the connectivity locus $\mathcal{G}$ of finite Blaschke products
   from Theorem~\ref{theo:finiteblaschke}
   into subclasses as follows. For $d\geq 2$, denote by
   $\mathcal{A}_{d}$ the Blaschke products of degree $d$ having an attracting
   fixed point in $\D$, and by $\mathcal{P}_d$ those having a triple fixed point 
   on $S^1$. The elements of $\mathcal{A}_d$ are called 
   \emph{elliptic} Blaschke products, while those of $\mathcal{P}_d$ are said to be
   parabolic with 
  \emph{zero hyperbolic step}; see \cite{fletcherblaschke}. Then
   $\mathcal{G}=\bigcup_k \mathcal{A}_d\cup \mathcal{P}_d$. 
   Our goal is to extend the proof of Theorem~\ref{theo:wandering} to the case
     of invariant attracting or parabolic components. To do so, we need to be able to
     replace the (non-dynamical) quasisymmetric map $\theta$ by a conjugacy between
     the two Blaschke products in question. This is possible by the following result. 
     (See also
     \cite[Theorem~A]{clarkvanstrienrigidity} for a much more general, and extremely
     deep result.)

     \begin{proposition}\label{prop:conjugacy}
 Let $d\geq 2$. Any two elements of $\mathcal{A}_d$ are quasisymmetrically 
   conjugate on $S^1$, and any two elements of $\mathcal{P}_d$ are quasisymmetrically
   conjugate on $S^1$. 
\end{proposition}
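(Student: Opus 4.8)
The plan is to handle each case in two stages: first produce a \emph{topological} conjugacy on $S^1$ from the circle dynamics alone, and then upgrade it to a quasisymmetric one. For $\mathcal A_d$, normalising the attracting fixed point to $0$ shows that every $g\in\mathcal A_d$ restricts to a real-analytic expanding covering of $S^1$ of degree $d$: the $2d-2$ critical points of the Schwarz-reflected rational map all lie off $S^1$ (the classical identity $|g'(\zeta)|=\sum_j (1-|a_j|^2)/|\zeta-a_j|^2>0$ rules out critical points on $S^1$), and since $g(0)=0$ forces $0$ to be attracting and $J(g)=S^1$, the extended map is hyperbolic. For $\mathcal P_d$, each $g$ restricts to a real-analytic degree-$d$ covering of $S^1$ whose only non-repelling point is the parabolic fixed point, at which, in a local coordinate, $g$ has the form $z\mapsto z+cz^{3}+O(z^{4})$ for some $c\neq 0$, with the two repelling petals tangent to $S^1$ (this is exactly the ``triple fixed point/zero hyperbolic step'' hypothesis, and $J(g)=S^1$ excludes any further non-repelling dynamics). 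In both cases a homeomorphism $h\colon S^1\to S^1$ with $h\circ g_0=g_1\circ h$ is obtained in the standard way, by coding $S^1$ with the Markov partition into $d$ arcs given by preimages of a repelling fixed point and matching itineraries, treating the parabolic point as its own symbol in the second case.

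For $\mathcal A_d$ the quickest route to quasisymmetry is via holomorphic motions. After normalisation, $\mathcal A_d$ is exactly the set of all degree-$d$ Blaschke products fixing $0$, which is path-connected (collapse the remaining zeros to $0$ to reach $e^{i\theta}z^d$, rotate, and spread them out again), and every map along such a path is a hyperbolic rational map with Julia set $S^1$. Complexifying the parameter, the $\lambda$-lemma provides an equivariant holomorphic motion of the repelling periodic points, hence of $S^1=J$, which extends to a quasiconformal motion of $\hat{\C}$; evaluating at the endpoint yields a global quasiconformal homeomorphism conjugating $g_0$ to $g_1$ on $S^1$, whose restriction to $S^1$ is therefore quasisymmetric. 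By uniqueness of the topological conjugacy (up to the reflection symmetry) this restriction can be taken to be the map $h$ above. Equivalently, one can argue directly that the dynamically defined partitions of $S^1$ have uniformly bounded geometry — from the Koebe distortion theorem applied to the holomorphic extensions together with uniform expansion — and that matching two bounded-geometry families of partitions produces a quasisymmetric map.

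The parabolic case is where the real work lies, because a parabolic fixed point is not stable under the deformation used above: varying within $\mathcal P_d$ changes the parabolic germ, so $S^1$ no longer moves holomorphically at the parabolic point. I would instead combine two local pictures. Away from a neighbourhood $N$ of the parabolic point both $g_i$ are uniformly expanding, so the Koebe/bounded-geometry argument shows $h$ is quasisymmetric there and that the partitions of $S^1\setminus N$ have bounded geometry. Inside $N$, Fatou coordinates conjugate each $g_i$ to the translation $w\mapsto w+1$ on its attracting and repelling petals, uniquely up to an additive constant; matching these coordinates defines the conjugacy near the parabolic point, and one checks that the transition between the Fatou coordinate and the circle coordinate behaves asymptotically like a fixed real power (coming from the exponent $3$ in $z+cz^{3}+\cdots$), hence is quasisymmetric, so the two pieces glue to a global quasisymmetric $h$. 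Alternatively, one can invoke structural stability of geometrically finite rational maps along a path inside $\mathcal P_d$ (both reflected maps are geometrically finite, with postcritical set meeting $J$ only at the parabolic point), together with the fact that parabolic germs with the same number of petals are quasiconformally conjugate; the cost is that one must first check $\mathcal P_d$, with the parabolic point normalised to $1$, is connected. Establishing that connectedness, and the uniform control of the geometry as one approaches the parabolic point, are the two steps that require genuine care.
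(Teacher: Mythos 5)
The paper does not prove this proposition from scratch: for $\mathcal{A}_d$ it observes that the reflected maps are hyperbolic rational maps, expanding on $J=S^1$, and cites the classical fact that two real-analytic expanding degree-$d$ circle coverings are quasisymmetrically conjugate (de Melo--van Strien, Petersen, Branner--Fagella); for $\mathcal{P}_d$ it cites \cite[Proposition~2.3]{lomonacopetersenshen} on parabolic external maps. Your treatment of $\mathcal{A}_d$ is a correct and essentially self-contained version of the first citation: the formula $\lvert g'(\zeta)\rvert=\sum_j(1-\lvert a_j\rvert^2)/\lvert \zeta-a_j\rvert^2$ does rule out critical points on $S^1$, hyperbolicity gives expansion, and either the bounded-geometry/Koebe route or the holomorphic-motions route works. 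One caveat on the latter: the family $z\mapsto e^{i\theta}z\prod(z-a_j)/(1-\overline{a_j}z)$ is only \emph{real}-analytic in the parameters because of the conjugates, so to invoke the $\lambda$-lemma you must genuinely complexify (replace $\overline{a_j}$ by independent parameters) and restrict back to the real slice; you allude to this but it deserves a sentence.

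The parabolic case is where your proposal stops short of a proof. The entire difficulty of the proposition is concentrated in the single clause ``one checks that the transition between the Fatou coordinate and the circle coordinate \ldots is quasisymmetric.'' Asymptotic linearity of the matched conjugacy at the parabolic point (which is what matching Fatou coordinates of two germs $z+cz^3+\cdots$ and $z+\tilde c z^3+\cdots$ gives you, since the Fatou coordinates are both $\sim -1/(2cz^2)$) does \emph{not} by itself imply quasisymmetry: a circle homeomorphism of the form $h(x)=\lambda x(1+o(1))$ can fail the three-point condition at small scales, and what is really needed is uniform real bounds on the geometry of the dynamically defined fundamental intervals accumulating at the parabolic point, together with equivariance. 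That is precisely the content of the cited Lomonaco--Petersen--Shen proposition, and it is not a one-line check. Your alternative route via structural stability also leaves open the connectedness of the normalised $\mathcal{P}_d$, as you acknowledge. So the strategy is sound and is in fact the strategy behind the result the paper invokes, but as written the parabolic half is an outline with its decisive estimate asserted rather than proved; either carry out the bounded-geometry argument at the parabolic point or cite \cite[Proposition~2.3]{lomonacopetersenshen} directly.
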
 
\begin{proof}
 The maps in $\mathcal{A}_d$ are hyperbolic in the sense of rational dynamics, 
    and hence expanding on their 
    Julia sets. The result is well-known in this case; see \cite[Exercise~2.3 in Chapter~II]{demelovanstrien}, 
     and compare \cite{petersenblaschke} for a more general theorem. See also
     \cite[Section~4.2]{brannerfagellasurgery}.
     
   For $\mathcal{P}_d$, the result
     follows from \cite[Proposition~2.3]{lomonacopetersenshen}. 
\end{proof}

\begin{proof}[Proof of Theorem~\ref{theo:finiteblaschke}]
  Let $g\in\mathcal{G}$, say of degree $d\geq 2$. If $g\in \mathcal{A}_d$, set
     $h \defeq \alpha_d$ from Proposition~\ref{prop:invariantfatouexistence}; 
     if $g\in \mathcal{P}_d$, set $h\defeq \rho_d$. Let
     $W$ be the corresponding invariant Fatou component, let 
     $\phi\colon W\to \D$ be a Riemann map, and set $\tilde{g}\defeq \phi\circ h\circ \phi^{-1}$. 
     
      Then by Proposition~\ref{prop:conjugacy}, there is a quasisymmetric
        homeomorphism $\theta\colon S^1\to S^1$ such that
        $g\circ \theta = \theta\circ\tilde{g}$.  Extend $\theta$ to a quasiconformal
        map $\theta\colon \D\to\D$ and define 
      \[ \tilde{f}\colon \C\to\C;\qquad z\mapsto \begin{cases} h(z) & \text{if } z\notin W \\
                                                    \phi^{-1}\bigl(\theta^{-1}\bigl(g(\theta(\phi(z)))\bigr)\bigr) &\text{if }z\in W. \end{cases}.   \] 
                                                    
      The argument now proceeds 
        exactly as in the proof of Theorem~\ref{theo:wandering}. The function
         $\tilde{f}$ is 
        quasisymmetric. Straightening an invariant Beltrami differential 
        that extends the complex dilatation of $\theta\circ\phi$, we obtain
        an entire function for which $g$ is a dynamically associated inner function.
                     \end{proof}
\begin{remarks}\mbox{}
\begin{enumerate}[(a)]
\item  To carry out the surgery, we could have started with any function 
    $h$ having an invariant attracting or parabolic component $W$ of the 
    required degree $d$. (For simplicity, our proof used the fact that $W$ is
    a Jordan domain, but it is easy to see that this is not essential.) In particular,
    let $\tilde{g}$ be the dynamically associated inner function, and let 
    $g$ be a Blaschke product of the same degree and type (i.e., elliptic or parabolic
    with zero hyperbolic step) as $\tilde{g}$. If additionally $g$ and $\tilde{g}$ are 
    \emph{quasiconformally equivalent}, i.e.\ differ only by pre- and post-composition with
    quasiconformal 
    homeomorphisms of the disc, then there is an entire function $f$ quasiconformally
    equivalent to $h$ for which $g$ is a dynamically associated inner function. 
    As mentioned in the introduction, it seems that a similar result holds for functions
    with finitely many singular values~-- or, more generally, Fatou components $W$
    for which $W\cap S(f)$ is compact. We shall leave this question for discussion in
    a future paper.
    \item Observe that we could also have deduced 
      Theorem~\ref{theo:wandering} from 
      (the proof of) Theorem~\ref{theo:finiteblaschke}, applying the surgery
      for attracting basins
      to the function $f$ 
      defined in~\ref{eqn:Cstarmap}. 
      From this, we obtain another self-map of $\C\setminus\{0\}$ realising a desired 
      Blaschke product. Taking a lift of this second function, we obtain a wandering
      domain with the desired property.
    \item We have restricted to finite-valence
      attracting and parabolic Fatou components,
      where we obtained a complete description of the associated Blaschke
      products. However, let us briefly comment
      on the case of a finite-valence 
      \emph{Baker domain}, i.e.\ an invariant component of
      the Fatou set on which 
      the iterates converge locally uniformly to infinity. Such a domain is 
      called \emph{hyperbolic}, \emph{simply parabolic} or \emph{doubly parabolic}, depending on whether the Denjoy-Wolff point of the
        associated Blaschke product is attracting, a double fixed point, or 
        a triple fixed point. Compare \cite{fagella-henriksen}. 
        Doubly parabolic examples of every finite 
        degree $d$ exist~\cite[Section~4]{fagella-henriksen}. The corresponding
        inner function belongs to $\mathcal{P}_d$, and we can apply our
        surgery to see that every element of $\mathcal{P}_d$ is realised as 
        a dynamically associated inner function of an entire function with a 
        Baker domain. 
        
      An analogue of 
        Proposition~\ref{prop:conjugacy} also holds for Blaschke products 
        with $J(f)\neq S^1$ 
        (again, this is a simple special 
        case of~\cite[Theorem~A]{clarkvanstrienrigidity}). Therefore, starting
        with a function having a hyperbolic or simply-parabolic Baker domain
        of finite degree, we can apply the same surgery technique. However,
        as far as we are aware, the only known hyperbolic
        and simply-parabolic Baker domains of finite valence 
        are univalent. Hence we cannot presently
        answer the question which finite Blaschke products
        arise as dynamically associated inner functions of entire functions with
        Baker domains.
  \end{enumerate}
\end{remarks}

\section{Inner functions of exponential maps}\label{sec:expo}
 
We begin with the following well-known observation concerning
  exponential maps $f_{\lambda}(z)=\lambda e^z$. 

\begin{lemma}[{\cite[Lemma~1.1]{DevandG}}]\label{lem:attractingexp}
  $f_{\lambda}$ has a fixed point of multiplier $\tau\in\C\setminus\{0\}$ 
    if and only if $\lambda = \tau\cdot e^{-\tau}$. 
\end{lemma}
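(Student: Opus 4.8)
The key observation is that $f_{\lambda}$ is its own derivative: $f_{\lambda}'(z) = \lambda e^{z} = f_{\lambda}(z)$ for all $z$. Hence, if $z_0$ is any fixed point of $f_{\lambda}$, its multiplier is
\[
 f_{\lambda}'(z_0) = f_{\lambda}(z_0) = z_0,
\]
so the multiplier of a fixed point always equals the fixed point itself. The plan is to exploit this identity to prove both implications directly.

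For the forward direction, I would suppose that $z_0$ is a fixed point of $f_{\lambda}$ with multiplier $\tau \in \C\setminus\{0\}$. By the observation above, $\tau = z_0$, and the fixed-point equation $\lambda e^{z_0} = z_0$ becomes $\lambda e^{\tau} = \tau$, which rearranges to $\lambda = \tau\cdot e^{-\tau}$. For the converse, assuming $\lambda = \tau\cdot e^{-\tau}$ with $\tau\neq 0$, I would simply verify that $z_0 \defeq \tau$ works: indeed $f_{\lambda}(\tau) = \lambda e^{\tau} = \tau e^{-\tau}e^{\tau} = \tau$, so $\tau$ is a fixed point, and its multiplier is $f_{\lambda}'(\tau) = f_{\lambda}(\tau) = \tau$ by the same identity.

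There is essentially no obstacle here; the only point that requires a word of care is the restriction $\tau\neq 0$. This is automatic from the structure of the problem: since $\lambda\neq 0$, the function $f_{\lambda}$ has no zeros, so $0$ is never a fixed point, and consequently (by the multiplier-equals-fixed-point identity) no fixed point of $f_{\lambda}$ can have multiplier $0$. Thus the statement is vacuously consistent at $\tau = 0$, and the equivalence holds precisely on $\C\setminus\{0\}$ as claimed.
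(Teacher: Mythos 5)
Your proof is correct and uses exactly the same key identity as the paper, namely that $f_{\lambda}'=f_{\lambda}$ forces the multiplier of any fixed point to equal the fixed point itself. The paper's proof is just a terser version of yours (it only writes out the forward direction, leaving the converse as an immediate verification), so nothing further is needed.
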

\begin{proof}
  If $z$ is a fixed point of multiplier $\tau$, then 
  $z=f_{\lambda}(z) =f_{\lambda}'(z)=\tau$. So 
  $\tau = \lambda e^{\tau}$, as claimed. 
\end{proof}

The following shows that unisingular inner functions with an attracting
  fixed point are determined by their degree and their multiplier.

\begin{lemma}\label{lem:unisingular}
  Let $\tau\in\D\setminus\{0\}$, and let $2\leq d\leq \infty$. Then, up to conjugacy by a M\"obius automorphism of $\D$, there exists a unique inner function
   $g\colon\D\to\D$ of degree $d$ such that $g$ has an attracting fixed point of multiplier $\tau$ in $\D$, and such that $g$ has only one singular value in $\D$. 
\end{lemma}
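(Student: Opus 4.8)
The plan is to normalise $g$ so that its attracting fixed point is at the origin; the remaining conjugating freedom is then exactly the rotations $z\mapsto e^{i\alpha}z$, and $\tau=g'(0)$ is a conjugacy invariant. The key input is the standard fact that, since $v$ (the unique singular value of $g$ in $\D$) is the only obstruction there, $g$ restricts to a covering map $g\colon g^{-1}(\D\setminus\{v\})\to\D\setminus\{v\}$; note that $g^{-1}(\D\setminus\{v\})=\D\setminus g^{-1}(v)$ is connected, being $\D$ with a discrete set removed, and $\pi_1(\D\setminus\{v\})\cong\Z$. Since $g$ has a simple zero at $0$ (as $g'(0)\ne0$), one first sees that $v\ne0$: otherwise the source of the covering has fundamental group free of rank $\#g^{-1}(0)$, which must be cyclic, forcing $\#g^{-1}(0)\le1$; and $\#g^{-1}(0)=1$ would make $g\colon\D\setminus\{0\}\to\D\setminus\{0\}$ a degree-one covering, hence $g$ a rotation, contradicting $d\ge2$.

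In the finite case I would post-compose with the automorphism $M$ of $\D$ sending $v$ to $0$, so that $h\defeq M\circ g$ is a Blaschke product of degree $d$ whose only critical value is $0$ (a finite Blaschke product has no finite asymptotic values, and $g|_{S^1}$ is an unbranched covering of $S^1$, so all critical points lie in $\D$). The covering-space analysis then forces $h^{-1}(0)$ to be a single point of multiplicity $d$, so $h(z)=e^{i\gamma}B_a(z)^d$ with $B_a(z)\defeq(a-z)/(1-\overline a z)$ and $a\in\D\setminus\{0\}$. Undoing $M$ and imposing $g(0)=0$ gives $g(z)=R_\theta\bigl(B_{a^d}(B_a(z)^d)\bigr)$ for some rotation $R_\theta$, whence a direct computation yields
\[
g'(0)=e^{i\theta}\cdot\frac{d\,a^{d-1}(1-|a|^2)}{1-|a|^{2d}}.
\]
Conjugation by a rotation replaces $(a,\theta)$ by $(e^{-i\phi}a,\theta+(d-1)\phi)$, leaving invariant precisely $|a|$ and $\theta+(d-1)\arg a$; so the conjugacy class is determined by these, and matching them to $\tau$ reduces to solving $|\tau|=F(|a|)$ and $\arg\tau=\theta+(d-1)\arg a$ for
\[
F(r)\defeq\frac{d\,r^{d-1}}{1+r^2+\cdots+r^{2(d-1)}}.
\]
Since $1/F$ is a sum of strictly decreasing functions of the form $\tfrac1d(r^m+r^{-m})$ (plus $\tfrac1d$ when $d$ is odd), $F$ is a strictly increasing homeomorphism of $(0,1)$, giving a unique admissible $|a|$, hence a unique conjugacy class.

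In the infinite case the same analysis instead forces $g^{-1}(v)=\emptyset$ (an infinite-degree connected cover of $\D\setminus\{v\}$ is simply connected, while finite degree would contradict $d=\infty$), so $g\colon\D\to\D\setminus\{v\}$ is \emph{the} universal covering. Such coverings exist and are inner: for instance $z\mapsto\exp\bigl((z+1)/(z-1)\bigr)$ covers $\D\setminus\{0\}$ and has unimodular radial limits almost everywhere on $S^1$, and post-composing with Möbius automorphisms handles general $v$; moreover any two such coverings differ by precomposition with an automorphism of $\D$. Fixing $v$, the Schwarz--Pick lemma shows the universal cover normalised by $g(0)=0$ has $|g'(0)|=\rho(|v|)$, where $\rho(v)=2v\log(1/v)/(1-v^2)$, with $\arg g'(0)$ freely prescribable; substituting $v=e^{-s}$ gives $\rho=s/\sinh s$, strictly decreasing in $s$ and hence a strictly increasing homeomorphism of $(0,1)$ as a function of $|v|$. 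So $|v|$ is uniquely determined by $|\tau|=\rho(|v|)$, the value of $\tau$ then determines $g$ given $v$, and conjugation by a rotation rotates $v$; hence all such $g$ are conjugate, and one exists for every $\tau\in\D\setminus\{0\}$. I expect the main obstacle to be the careful justification that ``exactly one singular value in $\D$'' genuinely delivers the covering-map statement in the Schwarz-reflection-extended inner-function setting; once that is granted, the rest is the topological classification of the covers together with the two elementary monotonicity facts for $F$ and $\rho$.
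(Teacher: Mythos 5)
Your argument is correct, but it takes a genuinely different route from the paper's. The paper gets existence essentially for free, by exhibiting an entire (or polynomial) model with a single singular value and a fixed point of multiplier $\tau$~--- namely $z\mapsto \tfrac{\tau}{d}\bigl((z+1)^d-1\bigr)$, respectively $\lambda e^z$ with $\lambda=\tau e^{-\tau}$~--- and taking its dynamically associated inner function; for uniqueness it uses the K{\oe}nigs linearisation to build a conformal conjugacy on a neighbourhood $U_0$ of the fixed point containing the singular value, and then lifts this conjugacy inductively through the covering structure of $g\colon g^{-1}(U_0)\to U_0$, using that $\bigcup_n g^{-n}(U_0)=\D$. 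You instead classify the candidates outright (unicritical Blaschke products $M\circ B_a^d$ when $d<\infty$; the universal covering of $\D\setminus\{v\}$ when $d=\infty$) and verify by an explicit multiplier computation, plus the monotonicity of $F$ and $\rho$, that $\tau$ is a complete conjugacy invariant. Your computations check out, and your route yields more: explicit formulas tying $\lvert\tau\rvert$ to the position of the critical point, respectively of the omitted value, which essentially recover the parametrisations $g_{a,b}$ and $z\mapsto z(z+\lambda)/(1+\overline{\lambda}z)$ that the paper treats separately. The trade-off is that the paper's softer argument transfers verbatim to the parabolic case (Lemma~\ref{lem:unisingularparabolic}), where there is no multiplier to compute. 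Two small points to make explicit. First, in the infinite case the step ``$\tau$ then determines $g$ given $v$'' needs the observation that the deck group of a universal covering acts transitively on fibres: if $g_2=g_1\circ A$ with $g_i(0)=0$, then $A(0)$ is a priori only some point of $g_1^{-1}(0)$, but replacing $A$ by $\gamma\circ A$ for a suitable deck transformation $\gamma$ makes $A$ a rotation, which is then pinned down by $g_2'(0)=e^{i\alpha}g_1'(0)$; without this reduction the derivative condition alone does not force $A=\operatorname{id}$, since $\lvert g_1'(p)A'(0)\rvert=\lvert g_1'(0)\rvert$ holds automatically for every $p$ in the fibre. Second, the covering property off the single singular value, which you rightly identify as the key technical input, is exactly the fact the paper imports from \cite[Proposition~2.8]{BFR}, so you are entitled to it.
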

\begin{proof}
  To prove existence, it is enough to exhibit the existence of a polynomial 
     or entire function
     having an attracting fixed point of multiplier $\tau$, and having only
     one singular value. Indeed, then the dynamically associated inner function
     is of the stated form. 
     
    For $d<\infty$, such a function is given by the polynomial 
      \[ z\mapsto \frac{\tau}{d} \cdot ( (z+1)^d-1); \]
     for $d=\infty$ the function $f_{\tau\cdot e^{-\tau}}$ has the 
     desired properties by  Lemma~\ref{lem:attractingexp}. 

  So it remains to prove uniqueness. Suppose that $g$ and $\tilde{g}$ are both functions with the stated properties. Let $z_0$ and $\tilde{z}_0$ be the corresponding fixed points,
     and $s$ and $\tilde{s}$ the singular values. By the K{\oe}nigs linearisation theorem, there is a simply-connected domain $U_0\supset \{z_0,s\}$ and a 
     conformal isomorphism
      \[ \phi \colon U_0 \to B(0,\lvert\tau\rvert^{-1}) \]
      such that $\phi(g(z)) = \tau \phi(z)$ and $\phi(s)=1$. An analogous function $\tilde{\phi}$ on a domain $\tilde{U}_0$ exists also for $\tilde{g}$. Set
        \[ \psi_0\defeq \tilde{\phi}^{-1}\circ \phi \colon U_0 \to \tilde{U}_0. \]
       Then $\psi_0$ conjugates $g$ to $\tilde{g}$ on $U_0$, with $\psi_0(z_0)=\tilde{z}_0$ and $\psi_0(s) =\tilde{s}$. 

    Now set $U_1\defeq g^{-1}(U_0) \supset U_0$. Then $g\colon U_1\to U_0$ is either a branched covering of degree $d$, branched only over $s$ (if $d<\infty$), or
      a universal covering (otherwise); see \cite[Proposition 2.8]{BFR}. 
      The same is true for $\tilde{g}$. It follows that we can lift $\psi_0$ to a map
      $\psi_1\colon U_1\to \tilde{U}_1$ such that $\psi_0(g(z)) = \tilde{g}(\psi_1(z))$ and 
      $\psi_1(z_0)=\tilde{z}_0$. We have $\psi_1(z)=\psi_0(z)$ near $z_0$, and hence by the identity theorem on all of $U_0$. In particular,
      $\psi_1(s)=\tilde{s}$, and we can continue inductively. 

     In this manner, we obtain a conformal conjugacy $\psi$ between $g$ and $\tilde{g}$ on 
       \[ \bigcup g^{-n}(U_0) = \D. \] 
     In other words, $g$ and $\tilde{g}$ are M\"obius conjugate, as claimed. 
\end{proof}

We now study the family of maps $g_{a,b}$ as in~\eqref{eqn:tangent}. (See Figure~\ref{fig:tan_inner}.)
  
\begin{figure}
\begin{center}
\def\svgwidth{.9\textwidth}
\begingroup%
  \makeatletter%
  \providecommand\color[2][]{%
    \errmessage{(Inkscape) Color is used for the text in Inkscape, but the package 'color.sty' is not loaded}%
    \renewcommand\color[2][]{}%
  }%
  \providecommand\transparent[1]{%
    \errmessage{(Inkscape) Transparency is used (non-zero) for the text in Inkscape, but the package 'transparent.sty' is not loaded}%
    \renewcommand\transparent[1]{}%
  }%
  \providecommand\rotatebox[2]{#2}%
  \newcommand*\fsize{\dimexpr\f@size pt\relax}%
  \newcommand*\lineheight[1]{\fontsize{\fsize}{#1\fsize}\selectfont}%
  \ifx\svgwidth\undefined%
    \setlength{\unitlength}{498.70861321bp}%
    \ifx\svgscale\undefined%
      \relax%
    \else%
      \setlength{\unitlength}{\unitlength * \real{\svgscale}}%
    \fi%
  \else%
    \setlength{\unitlength}{\svgwidth}%
  \fi%
  \global\let\svgwidth\undefined%
  \global\let\svgscale\undefined%
  \makeatother%
  \begin{picture}(1,0.59178445)%
    \lineheight{1}%
    \setlength\tabcolsep{0pt}%
    \put(0,0){\includegraphics[width=\unitlength,page=1]{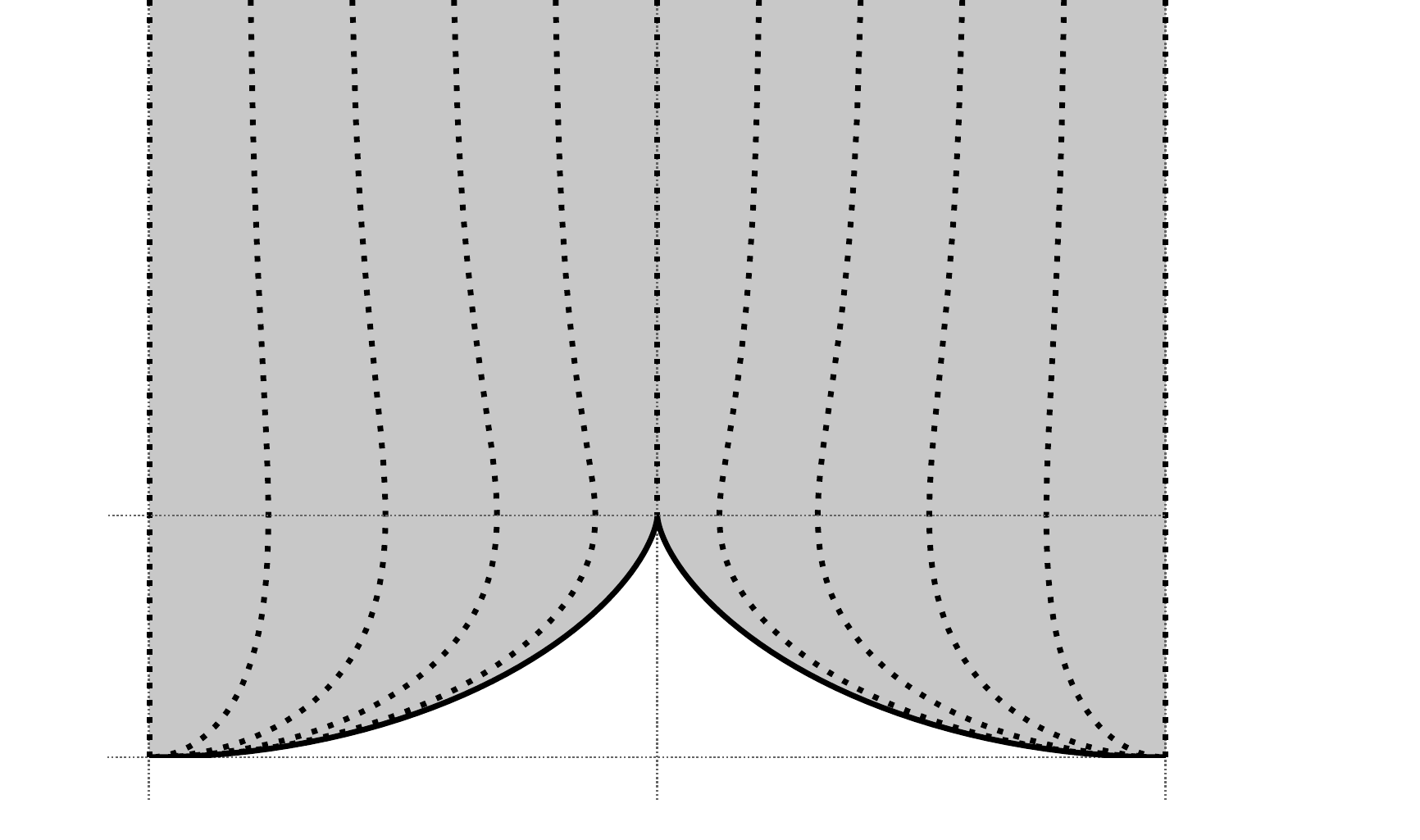}}%
    \put(-0.00182111,0.2343357){\color[rgb]{0,0,0}\makebox(0,0)[lt]{\lineheight{1.25}\smash{\begin{tabular}[t]{l}$a=1$\end{tabular}}}}%
    \put(-0.00182111,0.06138902){\color[rgb]{0,0,0}\makebox(0,0)[lt]{\lineheight{1.25}\smash{\begin{tabular}[t]{l}$a=0$\end{tabular}}}}%
    \put(0.4294186,0.00750834){\color[rgb]{0,0,0}\makebox(0,0)[lt]{\lineheight{1.25}\smash{\begin{tabular}[t]{l}$b=0$\end{tabular}}}}%
    \put(0.06413332,0.00484063){\color[rgb]{0,0,0}\makebox(0,0)[lt]{\lineheight{1.25}\smash{\begin{tabular}[t]{l}$b=-\frac{\pi}{2}$\end{tabular}}}}%
    \put(0.77108063,0.00484063){\color[rgb]{0,0,0}\makebox(0,0)[lt]{\lineheight{1.25}\smash{\begin{tabular}[t]{l}$b=\frac{\pi}{2}$\end{tabular}}}}%
  \end{picture}%
\endgroup%

\end{center}
\caption{\label{fig:tan_inner}The family $g_{a,b}$, for $-\pi/2<b<\pi/2$ and $0<a<\pi$. For parameters in the grey region,
  $g_{a,b}$ has an attracting fixed point in $\HH$. The curve 
   $\lvert b\rvert = \arccos(\sqrt{a})-\sqrt{a}\cdot \sqrt{1-a}$ is shown in black. Note that $g_{a,b}$ has a multiple
   fixed point in $\R$ for each parameter on this curve, but only $\tan = g_{1,0}$ has a triple fixed point. 
    The strong dotted lines are curves of fixed argument for the 
    multiplier of the attracting fixed point of $g_{a,b}$.}
\end{figure}  

\begin{proposition}\label{prop:G}
   No two different maps $g_{a,b}$ are conformally conjugate. Moreover, $g_{a,b}$ has an attracting fixed point in $\HH$ if and only if $a>1$ or $a\leq1$ and
     $\lvert b\rvert > \arccos(\sqrt{a})-\sqrt{a}\cdot \sqrt{1-a}$.
\end{proposition}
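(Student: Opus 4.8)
I would establish the two assertions separately.

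\emph{Rigidity.}
Suppose $M\colon\HH\to\HH$ is a conformal isomorphism (hence a real M\"obius transformation) with $g_{a',b'}=M\circ g_{a,b}\circ M^{-1}$. The point is that $g_{a,b}$ extends to a meromorphic function on $\C$ whose poles are exactly the arithmetic progression $P\defeq \pi/2+\pi\Z$, and that these poles accumulate only at $\infty$, so $g_{a,b}$ has no meromorphic continuation to any neighbourhood of $\infty$ in $\hat\C$. Consequently the M\"obius transformation $M$ must fix $\infty$, so $M(z)=cz+d$ with $c>0$ and $d\in\R$. Since $M$ carries the pole set of $g_{a,b}$ onto that of $g_{a',b'}$, we have $M(P)=P$, and an orientation-preserving affine map sending the progression $P$ (of common difference $\pi$) onto itself must have $c=1$ and $d=n\pi$ for some $n\in\Z$. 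Finally, the $\pi$-periodicity of $\tan$ gives $M\circ g_{a,b}\circ M^{-1}(z)=a\tan(z-n\pi)+b+n\pi=a\tan z+(b+n\pi)$, so $a'=a$ and $b'=b+n\pi$; since $b,b'\in(-\pi/2,\pi/2]$ this forces $n=0$, whence $(a,b)=(a',b')$.

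\emph{The fixed-point criterion: reduction.}
Note that $g_{a,b}$ is an inner function of $\HH$ (it is $\tan$ post-composed with the affine automorphism $w\mapsto aw+b$ of $\HH$) and is not an automorphism; by the Denjoy--Wolff theorem it therefore has a unique Denjoy--Wolff point $p\in\HH\cup\R\cup\{\infty\}$, and any fixed point of $g_{a,b}$ in $\HH$ is automatically attracting (Schwarz--Pick) and equal to $p$. Thus the claim amounts to deciding when $p\notin\R\cup\{\infty\}$. First, $p\neq\infty$: otherwise Julia--Wolff--Carath\'eodory would give a finite nonzero angular derivative of $g_{a,b}$ at $\infty$, so $g_{a,b}(iy)/(iy)$ would tend to a positive real as $y\to\infty$; but $g_{a,b}(iy)=b+ai\tanh y\to b+ai$, so that ratio tends to $0$. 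Second, $p\in\R$ if and only if $g_{a,b}$ has a real fixed point of multiplier at most $1$: the forward direction is again Julia--Wolff--Carath\'eodory, and conversely a real fixed point of multiplier $<1$ is attracting hence equal to $p$, while a real fixed point of multiplier exactly $1$ must also be $p$, since at any boundary fixed point other than $p$ the angular derivative exceeds $1$ and such a boundary fixed point cannot coexist with an interior fixed point.

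\emph{The fixed-point criterion: computation.}
A real fixed point satisfies $a\tan x_0=x_0-b$, so its multiplier is $g_{a,b}'(x_0)=a\sec^2 x_0=a+(x_0-b)^2/a$; this is $\le 1$ exactly when $(x_0-b)^2\le a(1-a)$, which forces $a\le 1$ and $|\tan x_0|\le\sqrt{(1-a)/a}=\tan x^*$, where $x^*\defeq\arccos\sqrt a\in[0,\pi/2)$, i.e.\ $x_0\in\bigcup_{k\in\Z}[k\pi-x^*,k\pi+x^*]$. On each such interval the function $x\mapsto x-a\tan x$ is nondecreasing, since its derivative $1-a\sec^2 x$ is $\ge 0$ there (because $\cos^2 x\ge\cos^2 x^*=a$), and it maps the interval onto $[k\pi-\beta(a),k\pi+\beta(a)]$, where $\beta(a)\defeq\arccos\sqrt a-\sqrt a\sqrt{1-a}$. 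Hence $g_{a,b}$ has a real fixed point of multiplier $\le 1$ if and only if $b\in\bigcup_k[k\pi-\beta(a),k\pi+\beta(a)]$; since $b\in(-\pi/2,\pi/2]$ and $0\le\beta(a)<\pi/2$, only $k=0$ contributes, so the condition is $|b|\le\beta(a)$. Combining this with the reduction step, $p\in\R$ if and only if $a\le 1$ and $|b|\le\beta(a)$; equivalently, $g_{a,b}$ has an attracting fixed point in $\HH$ if and only if $a>1$, or $a\le 1$ and $|b|>\beta(a)$, as claimed.

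\emph{Expected difficulty.}
The delicate part is getting the boundary cases exactly right: verifying rigorously that the Denjoy--Wolff point is never at $\infty$, and that a real fixed point of multiplier precisely $1$ is the Denjoy--Wolff point, so that the inequality $|b|>\beta(a)$ comes out strict. Everything else is routine monotonicity analysis of $\tan$ on its intervals of analyticity.
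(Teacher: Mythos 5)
Your proof is correct, and its overall skeleton matches the paper's: rigidity via the pole set forcing the conjugacy to be a translation by a multiple of $\pi$, then a Denjoy--Wolff reduction of the fixed-point criterion to the existence of a real fixed point of multiplier at most $1$, followed by the same monotonicity computation yielding the curve $|b|=\arccos\sqrt{a}-\sqrt{a}\sqrt{1-a}$. The one step where you genuinely diverge is the proof that $\infty$ is not the Denjoy--Wolff point. The paper argues dynamically: using $\pi$-periodicity and $g_{a,b}(z)\to b+ai$ as $\im z\to\infty$, an orbit tending to $\infty$ would have to do so with $\im z_n\to 0$, and the estimate $|\tan'(z)|\geq C|\tan(z)|^2$ on horizontal strips then forces $\im z_{n+1}\geq \im z_n$ for large $n$, a contradiction. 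Your argument instead observes that the radial limit of $g_{a,b}$ at $\infty$ is the finite point $b+ai$, which is incompatible with $\infty$ being the Denjoy--Wolff point by Julia--Wolff--Carath\'eodory (indeed, the angular limit at the Denjoy--Wolff point must be the point itself, so even the derivative statement is more than you need). This is shorter and avoids the derivative estimate entirely. You are also more explicit than the paper in handling the multiplier-exactly-$1$ case, invoking the uniqueness of the boundary fixed point with angular derivative at most $1$ to get the strict inequality $|b|>\beta(a)$; the paper subsumes this under ``attracting or parabolic fixed point in $\R$'' without elaboration. Both treatments are sound.
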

\begin{proof}
     If $g_{a,b}$ and $g_{\tilde{a},\tilde{b}}$ are conformally conjugate, then the conjugacy $\psi$ must preserve the set 
       $g_{a,b}^{-1}(\infty) = g_{\tilde{a},\tilde{b}}^{-1}(\infty)$, which consists of the odd multiples of $\pi/2$. So $\psi$ 
         is a translation by an integer multiple of $\pi$. Since it must
       also map singular values to singular values, we have $\psi(ai+b) = \tilde{a}i + \tilde{b}$. So $a=\tilde{a}$ and $b-\tilde{b}\in \pi \Z$. As $b,\tilde{b} \in (-\pi/2,\pi/2]$, we see that 
     $ b = \tilde{b}$ as required. 

    Recall that, by the Denjoy-Wolff theorem, for every $a$ and $b$ 
      there is a point $\zeta_0\in\overline{\HH}\cup\{\infty\}$ such that
      $g_{a,b}^n\to \zeta_0$ locally uniformly on $\HH$. We claim that
      $\zeta_0\neq \infty$. Indeed, recall that $g_{a,b}$ is $\pi$-periodic,
      and that $g_{a,b}(z)\to b + ai\in\HH$ as $\im z\to +\infty$. 
      Hence, if $z_n\defeq g_{a,b}^n(z_0)\to\infty$ for some
      $z_0\in\overline{\HH}$, we must have $\im z_n\to 0$. 
      On the other hand, within any horizontal strip of bounded height we have 
          \[ \lvert \tan'(z)\rvert = \frac{1}{\lvert \cos(z)\rvert^2} \geq 
              C\cdot \lvert \tan(z)\rvert^2 \]
      for some constant $C$. So, in particular, 
      $\lvert g_{a,b}'(z_n)\rvert \to \infty$. It follows that
      \[ \im z_{n+1} = \dist(z_{n+1},\R) \geq \dist(z_n,\R) = \im z_n \]
      for sufficiently large $n$. Since $\im z_n\to 0$ and $\R$ is completely invariant,
      this is possible only if $z_0\in\R$. 
      
 Hence $\infty$ cannot be the
      Denjoy-Wolff point of $g_{a,b}$. 
            In particular, 
        $g_{a,b}$ has an attracting fixed point in $\HH$ if and only if it does not 
        have an attracting or parabolic fixed point in $\R$. 

    Now, if $g_{a,b}$ has a fixed point of multiplier $\tau>0$ at $\alpha\in\R$, then  $\alpha$ is not an odd multiple of $\pi/2$, and 
      \[  \tau = g_{a,b}'(\alpha) = \frac{a}{(\cos \alpha)^2}. \]
     So $a=a_{\tau}(\alpha) = \tau\cdot (\cos \alpha)^2$, and in particular $a\leq \tau$.  But also 
     \[ \alpha = g_{a,b}(\alpha) = \tau \cdot (\cos \alpha)^2\cdot \tan \alpha+b= \tau\cdot \cos \alpha\cdot \sin\alpha + b, \]
     so
       \[ b = b_{\tau}(\alpha) = \alpha - \tau\cdot \cos\alpha \cdot \sin\alpha. \]
     Note that $a(\alpha)$ is a strictly decreasing function of $\alpha$ on $[0,\pi/2)$, and it is an easy exercise to see that, for $\tau\leq 1$, 
       $b_{\tau}(\alpha)$ is a strictly increasing function of $\alpha$. 

    To prove the claim, let us restrict to the case $b\in [0,\pi/2]$, which we can do by symmetry. If $g_{a,b}$ has a parabolic  fixed point $\alpha$ 
      in $\R$, then $a \leq 1$ and $\alpha\in[0,\pi/2]$. Therefore 
     \[ b = \alpha - \cos \alpha \cdot \sin\alpha = \arccos\sqrt{a} - \sqrt{a}\cdot \sqrt{1-a} =: \theta(a). \]
     Moreover, for fixed $\alpha$, $a_{\tau}(\alpha)$ is an increasing function of $\tau$, while $b_{\tau}(\alpha)$ is a decreasing function of $\tau$. 
      It follows that $g_{a,b}$ has an attracting fixed point in $\R$ if and only if $a<1$ and $b<\theta(a)$, as claimed.
\end{proof}

\begin{proof}[Proof of Theorem~\ref{theo:unisingular}]
  Let $f$ and $U$ be as in the theorem, and let $g\colon\HH\to\HH$ be an inner function dynamically associated to $f^n$ on $U$. 
    Then $g$ has an attracting fixed point in $\HH$, and a single singular value in $\HH$, which we may assume to be placed at $i$. Then 
     $g\colon \HH\to \HH\setminus \{i\}$ is a universal covering map. So is $\tan$, and the two maps agree up to pre-composition by a M\"obius transformation of the half-plane.
     Applying a suitable M\"obius conjugacy to $g$, we see that it can be chosen of the form $g_{a,b}\in\mathcal{G}$. 
\end{proof}

\begin{proof}[Proof of Theorem~\ref{theo:exp}]
   The characterisation of $\mathcal{G}$ is in Proposition~\ref{prop:G}, and
      claim~\ref{p1} holds by assumption. If $f\in\mathcal{F}$, then by Theorem~\ref{theo:unisingular}, there is an inner function
       $g\in\mathcal{G}$ dynamically associated to $f$, and this function is unique by the first part of Proposition~\ref{prop:G}.  
     Finally, let $g\in\mathcal{G}$ have an attracting fixed point of multiplier $\tau\in\D\setminus\{0\}$ in $\D$. There is a unique $\lambda\in\C\setminus\{0\}$ such that 
     $f_{\lambda}$ has a fixed point 
      of multiplier $\tau$, namely $\lambda = \tau\cdot e^{-\tau}$. As we have just proved, there is a dynamically associated inner function $g_{a,b}\in\mathcal{G}$, and by
      Lemma~\ref{lem:unisingular} and the first part of Proposition~\ref{prop:G}, we have $g=g_{a,b}$, as required. 
\end{proof} 

For the parabolic case, we use the following version of Lemma~\ref{lem:unisingular}.

 \begin{lemma}\label{lem:unisingularparabolic}
    Up to conformal conjugacy, $\tan\colon\HH\to\HH$ is the only inner 
       function of infinite valence 
       that has a unique singular value in $\HH$ and that has
       a fixed point of multiplicity $3$ in $\R$. 
 \end{lemma}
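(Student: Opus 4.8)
The plan is to follow the strategy of the proof of Lemma~\ref{lem:unisingular}, but the parabolic hypothesis combined with the assumption of infinite valence lets us shortcut the step‑by‑step lifting argument used there: we can identify $g$ with $\tan$ directly via the universal covering property and then use the fixed‑point data to remove the remaining M\"obius ambiguity. Since conformal conjugacies preserve the multiplicities of fixed points, and since the stabiliser of a boundary point of $\HH$ in $\mathrm{Aut}(\HH)$ acts transitively on $\HH$, after an initial conjugacy we may assume that the triple fixed point of $g$ lies at $0\in\R$ and that the unique singular value of $g$ in $\HH$ is the point $i$. (Here a fixed point of multiplicity $3$ at $0$ means that $g$ extends holomorphically across $0$ and that $g(z)-z$ vanishes to order $3$ there.)

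The main step is to show that $i$ is an \emph{omitted} value of $g$, so that $g\colon\HH\to\HH\setminus\{i\}$ is literally the universal covering map. Over $\HH\setminus\{i\}$ the map $g$ has no singular values, so $g\colon g^{-1}(\HH\setminus\{i\})\to\HH\setminus\{i\}$ is a covering map; by the infinite‑valence hypothesis it has infinite degree, and the only subgroup of $\pi_1(\HH\setminus\{i\})\cong\Z$ of infinite index is trivial, so $g^{-1}(\HH\setminus\{i\})$ is simply connected. As $g^{-1}(\HH\setminus\{i\})=\HH\setminus g^{-1}(i)$, and deleting even a single interior point of $\HH$ destroys simple connectivity, this forces $g^{-1}(i)=\emptyset$. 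The identical reasoning shows that $\tan\colon\HH\to\HH\setminus\{i\}$ is the universal covering: it is $\pi$‑periodic, hence of infinite valence, and its unique singular value in $\HH$ is the asymptotic value $i$, which it omits. Since $g$ is defined on the simply connected domain $\HH$, it lifts through the covering $\tan$ to a holomorphic map $T\colon\HH\to\HH$ with $g=\tan\circ T$; as $T$ is then a covering of the simply connected surface $\HH$ onto itself, it is a conformal automorphism. This identification is the part of the argument I expect to require the most care; everything after it is a bounded computation.

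It remains to pin down $T$. Since $g$ extends holomorphically across $0$ with $g(0)=0$ finite, the relation $g=\tan\circ T$ forces $\tan(T(0))=0$ (in particular $T(0)$ is neither $\infty$ nor a pole of $\tan$), so $T(0)\in\pi\Z$; post‑composing $T$ with a translation by a multiple of $\pi$ — which alters neither $\tan\circ T$ nor the property $T\in\mathrm{Aut}(\HH)$ — we may assume $T(0)=0$, so that $T(z)=az/(cz+1)$ for some $a>0$ and $c\in\R$. Then $g'(0)=\tan'(0)\,T'(0)=a$, and since $0$ is a fixed point of $g$ of multiplicity at least $2$ we get $a=1$. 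Expanding $g(z)=\tan(T(z))$ with $T(z)=z-cz^{2}+O(z^{3})$ and $\tan w=w+w^{3}/3+O(w^{5})$ yields
\[
 g(z)-z=-c\,z^{2}+O(z^{3});
\]
as the fixed point of $g$ at $0$ has multiplicity $3$, the coefficient of $z^{2}$ must vanish, so $c=0$, $T=\mathrm{id}$, and $g=\tan$. Finally, for the existence half of the statement one checks directly that $\tan\colon\HH\to\HH$ is an inner function of infinite valence, that its unique singular value in $\HH$ is $i$, and that $\tan z-z=z^{3}/3+\cdots$, so that $0$ is a fixed point of $\tan$ of multiplicity $3$ on $\R$.
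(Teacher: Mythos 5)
Your argument is correct, but it takes a genuinely different route from the paper's. The paper proves the lemma exactly as it proves Lemma~\ref{lem:unisingular}, with Fatou coordinates in place of the K{\oe}nigs linearisation: given two functions with the stated properties, one constructs a conformal conjugacy between attracting petals, normalised so that singular value goes to singular value, and then propagates it to all of $\HH$ by repeatedly lifting through the universal coverings $g\colon g^{-1}(U_k)\to U_k$; this shows directly that any two such maps are M\"obius conjugate. You instead exploit the explicit model: after normalising the triple fixed point to $0$ and the singular value to $i$, you show that both $g$ and $\tan$ are universal coverings $\HH\to\HH\setminus\{i\}$ --- your covering-space argument that $i$ must be an omitted value is a nice point which the paper merely asserts in the proof of Theorem~\ref{theo:unisingularparabolic} --- so that $g=\tan\circ T$ with $T\in\operatorname{Aut}(\HH)$, and you then eliminate $T$ by a two-term Taylor expansion at the parabolic point. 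Your version is more elementary and more computational (no Fatou coordinates), at the cost of relying on $\tan$ as an explicit comparison map; the paper's petal-lifting scheme is the template that also transfers to the finite-valence, branched analogues mentioned at the end of Section~\ref{sec:expo}. The one point worth flagging is that you must interpret ``fixed point of multiplicity $3$ in $\R$'' as requiring a holomorphic extension of $g$ across the point --- as you do explicitly, and as is automatic since that point is not a singularity of the inner function --- which is also the reading the paper uses.
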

 \begin{proof}
   The proof is similar to Lemma~\ref{lem:unisingular}: Given two
      functions $g$ and $\tilde{g}$ with the stated properties, we can use
      Fatou coordinates to construct petals $U_0,\tilde{U}_0\subset \HH$ of
       $g$ and $\tilde{g}$ and a conformal isomorphism 
       $\psi_0\colon U_0\to\tilde{U}_0$ with $\psi_0\circ g = \tilde{g}\circ \psi_0$, 
       and such that $\psi_0(s) = \tilde{s}$. (Here $s$ and $\tilde{s}$ are
       again the singular values of $g$ and $\tilde{g}$.) 
       
      We can again lift $\psi_0$ to a map $\psi_1$ on $g^{-1}(U_0)$, 
        chosen such that $\psi_1(g(s))=\tilde{g}(\tilde{s})$, and it follows as 
        before that $\psi_0 = \psi_1$ on $U_0$. The proof now proceeds as before,
        and we conclude that $g$ and $\tilde{g}$ are M\"obius conjugate.
 \end{proof} 

\begin{proof}[Proof of Theorem~\ref{theo:unisingularparabolic}]
  Suppose that $f$ and $U$ satisfy the hypotheses of the theorem.
   Let $\phi\colon U\to\HH$ be a conformal isomorphism, and let
   $g \defeq \phi\circ f^n\circ \phi^{-1}$ be the dynamically associated inner
   function, where $n$ is the period of the
   parabolic Fatou component $U$.
   
  Recall that $g$ is of infinite valence and
   has only one singular value $\alpha\in\HH$. 
   Consequently $g\colon \HH\to \HH\setminus\{\alpha\}$ is a universal
   covering map. Applying a suitable M\"obius conjugacy, 
   $g$ can be taken of the form $g=g_{a,b}$ 
   for unique choices of $a>0$ and $b\in (-\pi/2,\pi/2]$.

  Since $f^n$ has no fixed point in $U$, the inner function 
   $g$ has no fixed point in $\HH$, so its Denjoy-Wolff point $\zeta_0$
   must lie on the boundary.  
  As noted in the proof of Proposition~\ref{prop:G}, we have 
   $\zeta_0\neq \infty$. The point $\zeta_0$ is thus either an attracting fixed point,
   a double fixed point (with a single attracting direction along $\R$) 
   or a triple fixed point (with two repelling directions along the real axis). 
   As mentioned previously, the final case holds if and only if 
   $g$ has \emph{zero hyperbolic step}; i.e.\ 
      \[ \dist_{\HH}(g^k(z),g^{k+1}(z))\to 0  \]
   as $k\to\infty$, for all $z\in\HH$. 
   Here $\dist_{\HH}$ denotes hyperbolic distance. 
   
  It is well-known that 
   \[ \dist_U(f^{kn}(z),f^{(k+1)n}(z)) \to 0 \] 
   for $z\in U$. 
   Indeed, the proof of the existence
   of Fatou coordinates, see \cite[Chapter~10]{milnor}, shows that 
   all $z\in U$ eventually enter an \emph{attracting petal} $P\subset U$ on 
   which $f^n$ is conformally conjugate to the map $z\mapsto z+1$ on the
   upper half-plane. So $f^{kn}(z)\in P$ for large enough $k$, and 
     \[ \dist_U(f^{kn}(z),f^{(k+1)n}(z)) \leq \dist_P(f^{kn}(z),f^{(k+1)n}(z) =
         O(1/k). \] 
         
    Since $g$ is conformally conjugate to $f^n|_U$, we see that $g$ does indeed
      have zero hyperbolic step, and hence a triple fixed point at $\zeta_0$. 
      By Lemma~\ref{lem:unisingularparabolic}, we have $g=\tan$, as claimed.
\end{proof}
\begin{remark}
 In the specific case of the parabolic exponential map $f(z)=e^{z-1}$, we could
  proceed somewhat more directly, using the inherent symmetry of the Julia set. 
  Indeed, the parabolic basin $U$ intersects the real axis in the interval
  $(-\infty,1)$. This interval is a hyperbolic geodesic of $U$ by symmetry, 
  and contains the singular value $0$. For the inner function
   $h=h_{a,b}\colon \HH\to\HH$, it follows that the hyperbolic
  geodesic connecting the Denjoy-Wolff point to $\infty$ contains the 
  singular value. From this, we easily conclude that $b=0$, so that
  $h= a\cdot \tan(z)$, with $a\leq 1$. We have 
     \[ \dist_{\HH}(z,a\cdot \tan(z)) \asymp \dist_{\HH}(z,a\cdot z) = \log 1/a \]
  as $z\to 0$ in $\HH$. Hence we must have $a=1$, as claimed. 
\end{remark}

We note that similar results to Theorems~\ref{theo:unisingular} 
  and~\ref{theo:unisingularparabolic} hold when the cycle of $U$ contains only one
   singular value and and $f^n\colon U\to U$ is proper of degree $d$. 
   Indeed, in this case the associated inner function $g$ is a finite-degree
   \emph{unicritical} Blaschke product having connected Julia set, and the 
   connectedness locus of unicritical Blaschke products has been
   described in detail in~\cite{fletcherblaschke,fletcheretalblaschke}. When
   $d=2$, an elliptic Blaschke product fixing zero with multiplier $\lambda$ is given by 
     \[ z\mapsto z\cdot \frac{z+\lambda}{1+\overline{\lambda}z}, \] 
     see \cite[Section~4.2.1]{brannerfagellasurgery}, and a Blaschke product
     with a parabolic fixed point is given by the function~\eqref{eqn:parabolicblaschke}. 
     Hence, if $U$ is a periodic Fatou component of period $n$ whose orbit
     contains just one singular value, which is a critical value of degree $2$,
     then one of the above Blaschke products is dynamically associated to
     $f^n\colon U\to U$. 

\section{A generalisation of exponential maps}
\label{S.exponentials}
  We now generalise our considerations for exponential maps as follows.

\begin{theorem}\label{theo:finite}
  Suppose that $f$ is an entire function and $U$ is an unbounded forward-invariant Fatou component on which $f$ has infinite valence, but such that 
    $f^{-1}(a)\cap U$ contains exactly $p$ points, counting multiplicity, for some $a\in U$ and $p\geq 0$. 
      Assume that an 
     inner function dynamically associated to $f|_U$ has a finite number $q\geq 1$ of singularities on $\partial \D$. 
Then $f$ has a dynamically associated inner function of the form
\begin{equation}
\label{eq:Sdef}
g\colon \D\to\D; \qquad z \mapsto B(z) \exp\left(- \sum_{j=1}^q \left(c_j \frac{e^{i\theta_j}+z}{e^{i\theta_j} - z}\right)\right),
\end{equation}
for some finite Blaschke product $B$ of degree $p$, real numbers $\theta_1, \ldots, \theta_q$, and positive real numbers $c_1, \ldots, c_q$. 
\end{theorem}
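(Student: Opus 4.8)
The plan is to combine the canonical (Frostman) factorisation of inner functions from Theorem~\ref{theo:inner} with a convenient choice of Riemann map. First I would fix a conformal isomorphism $\phi\colon\D\to U$ with $\phi(0)=a$, and let $g\defeq\phi^{-1}\circ f\circ\phi$ be the resulting dynamically associated inner function. The point of the normalisation $\phi(0)=a$ is that $0$ is precisely the value attained by $g$ with total multiplicity $p$: indeed $g(z)=0$ if and only if $f(\phi(z))=\phi(0)=a$, and since $\phi$ is conformal this gives a multiplicity-preserving bijection between $g^{-1}(0)$ and $f^{-1}(a)\cap U$. Hence $g$ has exactly $p$ zeros in $\D$, counted with multiplicity. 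Since the number of singularities of a dynamically associated inner function is invariant under M\"obius conjugacy, and by Theorem~\ref{theo:tracts} is an intrinsic quantity, this particular $g$ has exactly $q$ singularities on $\partial\D$; as $g$ has infinite valence by hypothesis it is not a finite Blaschke product, consistently with $q\geq 1$.

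Next I would apply Theorem~\ref{theo:inner} to write $g=B\cdot S_\mu$, where $B$ is a Blaschke product built from the zeros of $g$ and $S_\mu$ is the singular inner function~\eqref{eq:singinnerdef} attached to a finite positive singular measure $\mu$ on $\partial\D$. Since $S_\mu$ is zero-free and $g$ has exactly $p$ zeros counted with multiplicity, $B$ is a \emph{finite} Blaschke product of degree $p$ (and can be written in the form~\eqref{eq:Bdef}). A finite Blaschke product extends holomorphically across the whole unit circle — its zeros lie inside $\D$ and its poles outside — so the singularities of $g$ coincide with those of $S_\mu$, which in turn are exactly the points of $\operatorname{supp}\mu$. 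Consequently $\operatorname{supp}\mu$ consists of exactly $q$ points, say $e^{i\theta_1},\dots,e^{i\theta_q}$ with the $\theta_j$ distinct.

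Finally, a positive measure supported on a finite set is a finite sum of point masses: $\mu=\sum_{j=1}^{q}c_j\,\delta_{\theta_j}$ with $c_j=\mu(\{e^{i\theta_j}\})>0$, positivity of each $c_j$ following from $e^{i\theta_j}\in\operatorname{supp}\mu$. Substituting this into~\eqref{eq:singinnerdef} gives
\[
S_\mu(z)=\exp\!\left(-\sum_{j=1}^{q}c_j\,\frac{e^{i\theta_j}+z}{e^{i\theta_j}-z}\right),
\]
so that $g=B\cdot S_\mu$ has exactly the form~\eqref{eq:Sdef}. The argument is mostly a matter of assembling standard structural facts about inner functions around the normalisation $\phi(0)=a$; the one step I would flag as the crux is the identification of the singularities of $g$ with $\operatorname{supp}\mu$, which rests on the finite Blaschke factor $B$ contributing no singularities together with the classical description of where a singular inner function fails to extend analytically.
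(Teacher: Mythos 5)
Your proposal is correct and follows essentially the same route as the paper: normalise the Riemann map by $\phi(0)=a$ so that $g$ has exactly $p$ zeros, invoke the Frostman factorisation $g=B\cdot S$ from Theorem~\ref{theo:inner}, deduce that $B$ is a finite Blaschke product of degree $p$ because $S$ is zero-free, and identify the singularities of $g$ with the support of the singular measure, which must then be $q$ point masses. The step you flag as the crux is exactly the one the paper handles by citing the classical description of singular inner functions (Garnett, Theorem~6.2).
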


Before we prove the theorem, let us note a special case. 

\begin{cor}
\label{cor:exponentials}
Suppose that $P, Q$ are polynomials of degree 
$\deg P \geq 0$ and $\deg Q \geq 1$. Suppose also that the function
\[
f(z) \defeq P(z) e^{Q(z)},
\]
has an unbounded forward-invariant Fatou component, $U$, containing the origin, on which $f$ has infinite valence. 
Then $f$ has a dynamically associated inner function of the form~\eqref{eq:Sdef}, with $q\leq \deg Q$ and $p\leq \deg P$. 
\end{cor}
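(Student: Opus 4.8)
The plan is to derive Corollary~\ref{cor:exponentials} from Theorem~\ref{theo:finite} by identifying the two combinatorial quantities $p$ and $q$ appearing there. First I would observe that $f(z) = P(z)e^{Q(z)}$ is a transcendental entire function (since $\deg Q\geq 1$), that its only finite asymptotic value is $0$ (the value approached along the tracts where $\re Q\to-\infty$), and that its critical points are the zeros of $f'(z) = \bigl(P'(z) + P(z)Q'(z)\bigr)e^{Q(z)}$; in particular all singular values other than $0$ are critical values, finitely many in number, so $f\in\mathcal{B}$. The hypothesis gives an unbounded forward-invariant Fatou component $U\ni 0$ on which $f$ has infinite valence, so Theorem~\ref{theo:finite} applies once we check its hypotheses.

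Next I would compute $p$, the number of preimages in $U$ of a generic point $a\in U$, counted with multiplicity. The natural choice is $a=0\in U$, whose full preimage set $f^{-1}(0)$ is exactly the zero set of $P$, a set of at most $\deg P$ points counted with multiplicity. Hence $p = \#\bigl(f^{-1}(0)\cap U\bigr)\leq \deg P$ (one should note $0$ is a fixed point since $f(0)=P(0)e^{Q(0)}$ need not be $0$ in general --- so more carefully, pick $a=0$ only after confirming $0\in f^{-1}(V)$; since $U$ is forward invariant and $0\in U$, any point of $U$ works and one simply takes the point with fewest preimages, which is at most $\deg P$ because $f^{-1}(w)\cap U \subset f^{-1}(w) = \{z: P(z) = w e^{-Q(z)}\}$, and for $w=0$ this is the zero set of $P$). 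This is the point where one must be slightly careful: Theorem~\ref{theo:finite} requires \emph{some} $a\in U$ with exactly $p$ preimages in $U$; taking $a=0$ (if $0\in U$, which is assumed) works and yields $p\leq\deg P$.

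Then I would bound $q$, the number of singularities of the dynamically associated inner function. By Theorem~\ref{theo:tracts} (or its corollary, Corollary~\ref{corr:tracts}, since $f\in\mathcal{B}$ and one needs $S(f)\subset F(f)$ --- here one uses forward invariance of $U$ together with the fact that the postsingular set of $f$ restricted to $U$ stays in $U$; more robustly one invokes Theorem~\ref{theo:tracts} directly with a Jordan domain $D$ containing $S(f)\cap U$), the number of singularities equals the number of components of $U\setminus f^{-1}(\overline D)$, which is at most the total number of tracts of $f$, i.e. the number of components of $f^{-1}(\C\setminus\overline{D'})$ for a large Jordan domain $D'\supset S(f)$. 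Finally I would count the tracts: a tract is an unbounded component of $\{z: \lvert f(z)\rvert > R\}$ for large $R$; since $\lvert f(z)\rvert = \lvert P(z)\rvert e^{\re Q(z)}$, for $\lvert z\rvert$ large this is governed by $e^{\re Q(z)}$, so the tracts correspond to the $\deg Q$ sectors at infinity where $\re Q(z)\to+\infty$ (one for each of the $\deg Q$ preimages under $Q$ of a large disc around $+\infty$, using that $Q$ is a polynomial of degree $\deg Q$ and behaves like a $\deg Q$-fold branched cover near infinity). Hence there are at most $\deg Q$ tracts, so $q\leq\deg Q$, and the corollary follows.

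The main obstacle I expect is the bookkeeping around applying Theorem~\ref{theo:finite}: verifying that the component $U$ genuinely satisfies the ``finite number $q\geq 1$ of singularities'' hypothesis (which requires knowing $S(f)\cap U$ is compact --- automatic here since $S(f)$ is finite and $U$ is open, provided $0\in U$ is not a problem point, and one must rule out $q=0$, which would force $g$ to be a finite Blaschke product contradicting infinite valence by Proposition~\ref{prop:basics}, so in fact $q\geq 1$ is automatic) and that the generic-preimage count is achieved by an actual point $a\in U$ rather than merely bounded. The tract count and the critical-point count are routine polynomial algebra. So the proof is essentially: $f\in\mathcal B$ with $\leq\deg Q$ tracts and $\leq\deg P$ preimages of $0$, feed into Theorem~\ref{theo:finite}.
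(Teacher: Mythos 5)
Your proposal is correct and follows essentially the same route as the paper: bound $p$ by taking $a=0\in U$ and noting that $f^{-1}(0)$ is the zero set of $P$, and bound $q$ via Theorem~\ref{theo:tracts} (using that $S(f)$ is finite, so $S(f)\cap U$ is compact) together with the fact that $P e^{Q}$ has at most $\deg Q$ tracts, then apply Theorem~\ref{theo:finite}. The paper's proof is just a terser version of exactly this argument.
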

\begin{remark}\normalfont
If $\lambda$ is sufficiently small, then the conditions of this corollary hold for $\lambda f$; see, for example, \cite[Lemma 7.1]{DaveSurvey}. 
\end{remark}
\begin{proof}[Proof of Corollary~\ref{cor:exponentials} using Theorem~\ref{theo:finite}]
  There are at most $\deg P$ preimages of $0$ under $f$ in $U$ 
   (counting multiplicity), and any associated inner function has at most 
   $\deg Q$ singularities on
    $\partial\D$ by Theorem~\ref{theo:tracts} (note that $S(f)$ is finite, and hence $U\cap S(f)$ is compact). Hence the hypotheses of Theorem~\ref{theo:finite} are satisfied. 
\end{proof}

\begin{proof}[Proof of Theorem~\ref{theo:finite}]
Let $g$ be an inner function dynamically associated to $f$. We shall assume that the Riemann map $\phi \colon \D \to U$ is chosen so that $\phi(0) = a$. By Theorem~\ref{theo:inner}, set $g = B \cdot S$, where $B$ is a Blaschke product and $S$ is a singular inner function of the form \eqref{eq:singinnerdef}.

Since $S$ is never zero, and $a$ has exactly $p$ preimages under $f$, counting multiplicity, it follows that $B$ must be a finite Blaschke product of degree $p$. 
   Note that this implies that $B$ has no singularities in the boundary of the disc.

It is easy to see, for example, by \cite[Theorem 6.2]{Garnett}, that the singularities of $g$ correspond exactly to the support of $\mu$. Since $g$ has only $q$ singularities, there exist real numbers $\theta_1, \ldots, \theta_q$ and positive real numbers $c_1, \ldots, c_q$ such that $\mu$ is equal to $q$ point masses, each of mass $c_j$, at the points $e^{i\theta_j}$. The result follows.
\end{proof}
We now give three applications of this result. Our first example
  notes that, for the functions studied in Theorems~\ref{theo:exp} 
  and~\ref{theo:unisingularparabolic}, we recover the family
  of maps $g_{a,b}$ from~\eqref{eqn:tangent}, up to conformal conjugacy.
  \begin{example}
\label{ex:exp}
Suppose that $f$ has  a  Fatou component $U$ 
 of period $n$ such that $U$ contains only one singular value 
 of $f$, and such that $f^n\colon U\to U$ is of infinite valence. Then 
 this restriction is a universal covering over a single point $a\in U$, and
 therefore Theorem~\ref{theo:finite} applies with $p=0$ and $q=1$. 

 In particular, suppose that
  $f(z) = \lambda e^z$, for some $\lambda \ne 0$, such that
  $f$ has a forward-invariant attracting or parabolic
  Fatou component, $U$. Then $U$ must contain the 
  origin, which is the only singular value of $f$, and
  Corollary~\ref{cor:exponentials} applies with 
  $P \equiv \lambda$, $p=0$, $Q \equiv \operatorname{Id}$ and $q=1$. 
  
The only Blaschke product of order zero is the rotation. Hence there exist 
$\sigma\in (-\pi,\pi]$, $c>0$, and $\theta \in (-\pi, \pi]$ such that
\begin{equation}
\label{e1}
g(z) = e^{i\sigma} \exp\left(c \frac{z+e^{i\theta}}{z-e^{i\theta}}\right).
\end{equation}
Conjugating $g$ with a rotation if necessary, we can assume that $\theta = \pi$, in which case 
\begin{equation}
\label{e2}
g(z) = \exp\left(i\sigma + c \frac{z-1}{z+1}\right).
\end{equation}
This is equivalent to~\eqref{eq:DevandG} and~\eqref{eqn:tangent}, with 
 $\mu = \sigma + ic$ and $(a,b)=(c,\sigma)/2$, respectively.

 In particular, when $U$ is a parabolic basin, and 
  in particular for $f(z)=e^{z-1}$, we have $c=2$ and $\sigma=0$
   by Theorem~\ref{theo:unisingularparabolic}.
   So here $g$ takes the form
\[
g(w) = \exp\left(\frac{2(w-1)}{w+1}\right).
\]
\end{example}
\begin{example}
\label{ex:zexp}
Suppose we are in the family $f(z) = \lambda z e^z$, so that $P \equiv \lambda \operatorname{Id}$, $p=1$, $Q \equiv \operatorname{Id}$ and $q=1$. Suppose that $f$ has a forward-invariant Fatou component, $U$, of infinite valence, that contains the origin. Then the hypotheses of Corollary~\ref{cor:exponentials} hold.

The fact that $q=1$ means, again, that after the right choice of $\arg \phi'(0)$ we can take, for some positive $c$,
\[
S(z) = \exp\left(c \frac{z-1}{z+1}\right).
\]

For $B$, the fact that $p=1$ means that $B$ is a Blaschke product of degree one. However, we also know that $f(0) = 0$ and so $g(0) = 0$. Thus, for some $\sigma>0$, we have
\[
g(z) = e^{i\sigma} z \exp\left(c \frac{z-1}{z+1}\right).
\]
\end{example}
\begin{example}
\label{ex:powerexp}
Suppose we are in the family $f(z) = \lambda e^{z^q}$, for some $q \in \N$, so that $P \equiv \lambda$ and $Q(z) = z^q$. Suppose that the hypotheses of Corollary~\ref{cor:exponentials} hold. Since $f$ omits the origin, we get that the Blaschke product is just a constant. We also get, by obvious symmetry considerations, that the $\theta_j$ in \eqref{eq:Sdef} can be taken to be the $q$-th roots of unity, and the $c_j$ are all the same. Set $\omega = e^{2\pi i/q}$. Then there exists $\sigma>0$ and $c>0$ such that
\[
g(z) = e^{i\sigma} \exp\left(-c\sum_{j=1}^n \left(\frac{\omega^j+z}{\omega^j - z}\right)\right).
\]
\end{example}
%

%
%
%
\section{Proof of Theorem~\ref{theo.sine}}
\label{S.sine}
\begin{proof}[Proof of Theorem~\ref{theo.sine}]
It is easy to see that if $\lambda \in (0, 1)$, then 
 both critical values $\pm \lambda$ are in the immediate basin
 of attraction $U$ of the attracting fixed point at $0$. It follows
 easily that $U=F(f)$. (This was already observed by Fatou~\cite{fatou}.) 
  In particular,~\ref{p1} holds. 

For simplicity write $f$ for $f_\lambda$. Choose the Riemann map $\phi\colon \D \to U$ so that $\phi(0) = 0$ and $\phi'(0) > 0$. Then $\phi$ maps points on the real line to the real line, because of the obvious symmetry of $U$. Let $g$ be the inner function $g \defeq \phi^{-1} \circ f \circ \phi$, and let
\[
g = B \cdot S,
\]
where $B$ is a Blaschke product and $S$ is a singular inner function, as usual. 

Clearly $0$ is a simple zero of $f$, and so of $g$, and so of $B$. Notice that $f$ is $2 \pi$-periodic. Notice also that $f^2$ (the second iterate) is $\pi$-periodic. Hence $J(f)$ is $\pi$-periodic. The zeros of $f$ are the points $\pm\zeta_n$ where $$\zeta_n = n\pi, \qfor n \in \N.$$ Hence the other zeros of $g$, and so of $B$, are all of the form $\pm a_n$, for some increasing sequence with
 $a_n \rightarrow 1$ as $n\rightarrow \infty$.  Thus we can write $B$ as
\begin{equation}\label{eqn:B}
B(z) = z \cdot \prod_{n \geq 1} \frac{a_n^2 - z^2}{1 - a_n^2 z^2}.
\end{equation}

We claim that $S$ is, in fact, absent. To prove this claim, suppose otherwise. 
If $D\subset U$ is a Jordan domain containing $[-\lambda,\lambda]$, then
$f^{-1}(U\setminus D)$ has exactly two components. Indeed, 
 by the elementary mapping properties of $\sin$, the set 
 $f^{-1}(\partial D)$ consists of two curves tending to $\pm\infty$, symmetrically
 with respect to the real axis. 
By Theorem~\ref{theo:tracts}, the map $g$ has two singularities on $\partial \D$. Since $\pm 1$ are singularities of $B$, this means that $S$ has at most two singularities, which would need to be positioned at $\pm 1$. 
Since $U$ is symmetric on reflection in the imaginary axis, and since $f$ is an odd function, our choice of $\phi$ implies that $g$ is also an odd function. Hence $S$ is generated by two equal masses, each at $\pm 1$. In particular, by a calculation
from~\eqref{eq:singinnerdef}, there exists $c>0$ such that
\[
S(z) = \exp \left(c \frac{z^2 + 1}{z^2 - 1}\right).
\]
It follows that as $x \rightarrow 1$, we have that $S(x) \rightarrow 0$, and so $g(x) \rightarrow 0$. This is impossible, as $f(x)$ does not have a limit as $x \rightarrow \infty$. This contradiction proves our claim, and we have 
 $g=B$ with $B$ as in~\eqref{eqn:B}.

Next we seek to find a formula for the $a_n$. Because of the periodicity of $J(f)$ the hyperbolic distance in $U$ from $\zeta_n$ to $\zeta_{n+1}$ is constant, and in fact equal to the hyperbolic distance in $U$ from $0$ to $\zeta_1$. Call this distance $d$. By symmetry, the real axis is a hyperbolic geodesic of $U$. It
follows that $\dist(0,\zeta_n)=n\cdot d$. 

Then the hyperbolic distance in $\D$ from $0$ to $a_n$ is also $n\cdot d$, and so 
\[
\log \frac{1+a_n}{1-a_n} = n\cdot d,
\]
from which we calculate
\[
a_n = \frac{e^{n\cdot d} - 1}{e^{n\cdot d} + 1}= \frac{\tau^{n} - 1}{\tau^{n} + 1}
\]
where $\tau = e^d\in (1,\infty)$.

Clearly $d$, and hence $\tau$, depends on $\lambda$. Write $\tau = \tau(\lambda)$; it remains to show that $\tau\colon (0,1)\to (1,\infty)$ is a homeomorphism. Since $\phi(0) = g(0) = f(0) = 0$, 
\[
\lambda = f'(0) = g'(0) = \prod_{n \geq 1} a_n^2 = \prod_{n \geq 1} \left(\frac{\tau^{n} - 1}{\tau^{n} + 1}\right)^2.
\]
 In particular, $\lambda$ is uniquely determined by $\tau$. 
The function $x\mapsto (x-1)/(1+x)$ is strictly increasing on 
 $[1,\infty)$. So $\lambda$ is a strictly increasing continuous
 function of $\tau$. Moreover, it is easy to see that
  $\lambda\to 0$ as $\tau\to 1$, and $\lambda\to 1$ as $\tau\to\infty$. 
\end{proof}
%

%
%
%
\section{Fatou components with infinitely many critical values}
\label{S.infinite}
\begin{proof}[Proof of Theorem~\ref{theo.Fatou}]
It is easy to show that $f_\lambda$ has a completely invariant Fatou component, $U_\lambda$, which contains a right half-plane. Hence \ref{p1} holds. 

For simplicity write $f$ for $f_\lambda$ and $U$ for $U_\lambda$. Note that $0 \in U$, and indeed (by a calculation) $\R \subset U$. Note also that $f(\overline{z}) = \overline{f(z)}$, and so $U$ is symmetric on reflection on the real line.

Let $\alpha > 0$, and choose the Riemann map $\phi : \HH \to U$ so that $\phi(i\alpha) = 0$ and $i\phi'(i\alpha) > 0$. (We will choose $\alpha$ later). Then $\phi^{-1}$ maps points on the real axis to the positive imaginary axis, because of the symmetry of $U$. Let $h \defeq \phi^{-1} \circ f \circ \phi$ be an inner function of the upper half-plane.

Note that $f(w + 2\pi i) = f(w) + 2\pi i$, for $w \in \C$. This means that $U$ is periodic under translation of $2\pi i$. We can deduce that there exists $\kappa > 0$ such that $$\phi^{-1}(w + 2 \pi i) = \phi^{-1}(w) - \kappa, \qfor w \in \C.$$ It then follows that $h(z - \kappa) = h(z) - \kappa$, for $z \in \HH$.

We claim that $h$ has one singularity, and this is at infinity. 
 (Observe that we cannot apply Theorem~\ref{theo:tracts}, as the 
 singular values of $f$ are not compactly contained in $U$.) Suppose that $\zeta$ is such that $|\zeta|$ is small. It can be shown by a calculation that the preimages under $f$ of $\zeta$ that are of large modulus are close to the points $-\log |y_n| + iy_n$, where 
\[
y_n = 
\begin{cases}
\frac{4n+1}{2}\pi, \text{ for } n \in \N, \\
\frac{4n+3}{2}\pi, \text{ for } -n \in \N.
\end{cases} 
\]
These points can be connected to infinity by two curves in $U$ (one containing the points of positive imaginary part, and the other containing the points of negative imaginary part) that are each homotopic to $(0, +\infty)$. This establishes the fact that $h$ has only one singularity, since by transferring everything to the unit disc via a M\"obius map we can deduce that $g$ has exactly one singularity on $\partial \D$, the point where all preimages of almost every $z \in \D$ accumulate. Moreover, this singularity of $h$ is at $\lim_{x \to +\infty} \phi^{-1}(x) = i\infty$. This completes the proof of the claim.

Since $h$ has no singularities in $\overline{\HH}$, by Schwarz reflection it
 extends to a transcendental meromorphic map of the whole plane, which we continue to call $h$, and which maps $\HH$ to itself. For simplicity we now write
\[
h(z) \defeq z + G(z) \defeq z + \frac{G_1(z)}{G_2(z)},
\]
where $G_1$ and $G_2$ are entire. Then
\[
G(z + \kappa) = G(z),
\]
in other words, $G$ is $\kappa$-periodic. Note that $\kappa$ depends linearly on $\alpha$; this can easily be seen by pre-composing $\phi$ with a map $z \mapsto cz$, for $c > 0$. Hence we can assume that $\alpha$ is chosen so that $\kappa = \pi$; in other words, $G$ is $\pi$-periodic.

Next we locate the poles and fixed points of $h$. We have $\phi(it)\in\R$ and $\phi(it)\to -\infty$ as $t\to 0$; hence $\phi(h(it))=f(\phi(it))\to +\infty$. So $h(it)\to\infty$ as $t\to 0$, and $0$ is a pole of 
$h$. Since $h$ commutes with translation by $\kappa=\pi$, all the integer multiples of $\pi$,  $\zeta_n=\pi n$, are also poles of $h$. We claim that there are no other poles. Indeed, 
 every pole of $h$ is the landing point of some piece of $h^{-1}(i\cdot [\alpha,\infty)) = \phi^{-1}(f^{-1}([0,\infty)))$. But 
 $f^{-1}([0,\infty))$ consists of countably many curves $\gamma_n$, each tending to $-\infty$ asymptotically at imaginary part $2n\pi$. 
  It follows easily that $\phi^{-1}(\gamma_n)$ lands at $\zeta_{-n}$, and these are the only poles of $h$.

Now we locate the fixed points of $h$. The fixed points of $f$ are the points $z_n \defeq -\log\lambda + (2n+1)\pi i$, for $n \in \Z$. These points are accessible boundary points of $U$, since, for each $n \in \N$, the set $\{ z_n + x : x > 0 \}$ lies in $U$. Since the set of fixed points is $2 \pi i$-periodic, the corresponding fixed points $w_n \defeq \phi^{-1}(z_n)$ of $h$ are a $\pi$-periodic set of real numbers; this follows from our choice of $\alpha$ above. Since the fixed points of $f$ are simple, all these are 
simple fixed points of $h$.

It is easy to see that the poles $\zeta_n$ are simple, as otherwise $h$ could not preserve the upper half-plane. Similarly, $h$ cannot have any critical points on the real line. For, if $x \in \R$ and $h'(x) = 0$, then close to $x$ the map $h$ behaves like a power map and so cannot preserve the upper half-plane. 
It follows that there is exactly one fixed point between every two poles. We deduce that the points $z_n$ are the only fixed points of $h$. Moreover, since $z_0$ and $z_{-1}$ are
symmetrically placed with respect to the real axis, the same must be true of $w_0$ and $w_{-1}$. Since $w_{n}-w_{n+1} = \pi$, the $w_n$ are at the \emph{odd} multiples of $\pi/2$.

Since the poles of $h$ are exactly the zeros of the sine function, it follows that there is an entire function $h_2$ such that
\[
G_2(z) = e^{h_2(z)} \sin z.
\]

Similarly, the fixed points of $h$ (which are all simple) are the zeros of $G_1$. Hence there is an entire function $h_1$ such that
\[
G_1(z) = e^{h_1(z)} \cos z.
\]

We have now concluded that there is an entire function $H$ such that
\[
h(z) = z + e^{H(z)} \cot z.
\]
Note that $G$, and hence $e^{H(z)}$, is $\pi$-periodic. 

Now, if $x > 0$ is large, then $f(x + iy) \approx x + iy + \lambda$. We can deduce that if $y > 0$ is large, then $h(x + iy) = x + iy + i\nu(x, y)$, where $\nu(x, y)$ is small. It follows that $e^{H(z)}$ is bounded, and so must be constant. Since $f$ maps the real line to itself and maps large values of $x$ close to $x + \lambda$, $h$ maps the positive imaginary axis to itself and, when $y$ is large, maps $iy$ close to $i(y + \nu)$ for some positive $\nu$. Thus $e^{H(z)}$ is the constant $-\nu$, and we have obtained that
\[
h(z) = z - \nu \cot z.
\]

It remains to show that $\nu = \lambda/2$. Fix $\delta > \max(\pi,\nu)$. For $t>\delta$, we can apply Koebe's distortion theorem
 to $\phi$, restricted to the disc $D(it,t)\subset\HH$ centred at $it$, to obtain distortion estimates on the smaller disc $D(it,\delta)$. More precisely, setting
 $r\defeq \delta/t<1$, we have 
 \[ \frac{(1-r)^2}{(1+r)^2} \leq \frac{\lvert \phi(z) - \phi(it)\rvert }{\lvert z-it\rvert }\cdot \frac{1}{\lvert \phi'(it)\rvert} \leq \frac{(1+r)^2}{(1-r)^2} \]
 for all $z\in D(it,\delta)$. For $z=it + \pi$, we have $\phi(z)=\phi(it)+2\pi$, so $\lvert \phi'(it)\rvert \to 2$ as $t\to\infty$. Similarly, for 
  $z= h(it)$ and sufficiently large $t$, we have $z\in D(it,\delta)$ and 
    \[ \frac{\lvert \phi(z) - \phi(it)\rvert}{\lvert z-it\rvert } = \frac{\lvert f(\phi(it)) - \phi(it)\rvert }{\lvert h(it)-it\rvert } \to \frac{\lambda}{\nu}, \]
     so $\phi'(it) \to \lambda/\nu$. We have shown $\nu = \lambda/2$, as required.
\end{proof}

\section{Proof of Theorem \ref{theo:finitecomplicated}}
\label{sec:approximation}
We first give a simple result about uniform convergence of finite Blaschke products in the unit disc, which we use in the proof of Theorem \ref{theo:finitecomplicated}.
\begin{proposition}
\label{prop:conv}
Suppose that $(B_n)_{n \in \N}$ is a sequence of finite Blaschke products of degree $d$, which converge locally uniformly on $\D$ to a finite Blaschke product, $B$, of degree $d$. Then the convergence is, in fact, uniform on $\D$.
\end{proposition}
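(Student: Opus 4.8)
The plan is to prove Proposition~\ref{prop:conv} in two stages: first I would show that the zeros of the $B_n$ stay in a fixed compact subset of $\D$, and then I would promote the hypothesised local uniform convergence to uniform convergence on $\overline{\D}$ by a normality argument on a slightly larger disc.

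For the first stage, write each $B_n$ in the form~\eqref{eq:Bdef} with zeros $a_1^{(n)},\dots,a_d^{(n)}\in\D$ (listed with multiplicity) and unimodular constant $e^{i\theta_n}$. The elementary fact to exploit is that a single Blaschke factor $z\mapsto \frac{|a|}{a}\cdot\frac{a-z}{1-\overline a z}$ tends to the constant function $1$, uniformly on compact subsets of $\D$, as $a\to\zeta\in\partial\D$. Suppose for contradiction that $\sup_n\max_j|a_j^{(n)}|=1$. Then, passing to a subsequence (and pigeonholing on which index achieves the maximum), we may arrange that every $a_j^{(n)}$ converges in $\overline{\D}$, say $a_j^{(n)}\to b_j$, that $e^{i\theta_n}$ converges, and that $|b_{j_0}|=1$ for some $j_0$. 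Since a finite product of uniformly bounded, uniformly convergent sequences of holomorphic functions converges uniformly on compact sets, along this subsequence $B_n$ converges locally uniformly on $\D$ to the finite Blaschke product obtained by keeping only the factors with $|b_j|<1$; its degree is $\#\{j:|b_j|<1\}\le d-1$. As $B_n\to B$ locally uniformly, this limit equals $B$, contradicting $\deg B=d$. Hence there is $\rho\in(0,1)$ with $|a_j^{(n)}|\le\rho$ for all $n$ and $j$; the zeros of $B$, being subsequential limits of the $a_j^{(n)}$, also lie in $\overline{B(0,\rho)}$.

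For the second stage, fix $R$ with $1<R<1/\rho$. Each $B_n$ is a rational map with poles at the points $1/\overline{a_j^{(n)}}$, all of modulus $\ge1/\rho>R$, so $B_n$ is holomorphic on $B(0,R)$; moreover each Blaschke factor is bounded there by $(\rho+R)/(1-\rho R)$, so $\{B_n\}$ is uniformly bounded on $B(0,R)$ by $M\defeq\bigl((\rho+R)/(1-\rho R)\bigr)^d$. For the same reason $B$ continues holomorphically to a neighbourhood of $\overline{B(0,R)}$. By Montel's theorem $\{B_n\}$ is normal on $B(0,R)$, and any locally uniform subsequential limit agrees with $B$ on $\D$, hence with the continuation of $B$ on all of $B(0,R)$ by the identity theorem; therefore $B_n\to B$ locally uniformly on $B(0,R)$. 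Since $\overline{\D}$ is a compact subset of $B(0,R)$, the convergence is uniform on $\overline{\D}$, and in particular on $\D$.

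The main obstacle — and really the only step needing thought — is ruling out zeros escaping to $\partial\D$ in the first stage; this is precisely where the assumption $\deg B=d$ is essential. (If it is dropped the statement fails: $B_n(z)=\frac{|a_n|}{a_n}\cdot\frac{a_n-z}{1-\overline{a_n}z}$ with $a_n\to1$ converges locally uniformly on $\D$ to the constant $1$, yet $\sup_{z\in\D}|B_n(z)-1|=1$ for all $n$.) The remaining steps — the Blaschke-factor limit, the pole and bound estimates on $B(0,R)$, and the Montel-plus-identity-theorem argument — are entirely routine.
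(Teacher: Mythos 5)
Your proof is correct and follows essentially the same two-stage strategy as the paper: first confine the zeros of the $B_n$ to a fixed disc $D(0,\rho)$ with $\rho<1$, then observe that all $B_n$ extend holomorphically and uniformly boundedly to a disc of radius $R>1$ and upgrade to uniform convergence on $\overline{\D}$ by normality. The only difference is in the first stage, where the paper applies Rouch\'e's theorem on the circle $|z|=\rho$ (avoiding any passage to subsequences) while you argue by compactness and the degeneration of a Blaschke factor to a unimodular constant as its zero tends to $\partial\D$; your second stage (Montel plus the identity theorem) is just the Vitali--Porter theorem invoked in the paper.
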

\begin{proof}
In general we denote the open disc with centre $w \in \C$ and radius $r>0$ by 
\[
D(w, r) \defeq \{ z \in \C : |z-w| < r \}.
\]
Let $\rho \in (0, 1)$ be such that all the zeros of $B$ lie in the disc $D(0, \rho)$. Set 
\[
t \defeq \min \{ |B(z)| : |z| = \rho \} > 0.
\]
It follows from locally uniform convergence that for all sufficiently large values of $n$, we have 
\[
|B(z) - B_n(z)| < t \leq |B(z)|, \qfor |z| = \rho.
\]
It then follows from Rouch\'e's theorem that, for all sufficiently large values of $n$, all the zeros of $B_n$ lie in $D(0, \rho)$. Hence there exists $r \in (0, 1)$ such that all the zeros of all the $B_n$ lie in $D(0, r)$.

By an easy calculation, we can deduce that there exists $r' > 1$ such that, with $D \defeq D(0, r')$, each $B_n$ is analytic in $D$, and the family $(B_n)_{n \in \N}$ is uniformly bounded in $D$. It then follows by the Vitali-Porter theorem, see, for example, \cite{Schiff}, that the $B_n$ converge locally uniformly to $B$ in $D$. The result follows, as $\overline{\D}$ is a compact subset of $D$.
\end{proof}
Now we give the proof of Theorem~\ref{theo:finitecomplicated}.
\begin{proof}[Proof of Theorem~\ref{theo:finitecomplicated}]
Let $(B_n)_{n \in \N}$ be a sequence of finite Blaschke products that is dense in the space of finite Blaschke products (in the topology of uniform convergence). Such a sequence exists, for example, by choosing functions of the form \eqref{eq:Bdef} with $d$ finite and all the variables $\theta, \Re a_1, \Re a_2, \ldots, \Re a_d$, and $\Im a_1 \Im a_2, \ldots, \Im a_d$ rational.

For each $n \in \N$ let $T_n$ be the translation $T_n(z) \defeq z +4n$, and let $D_n$ be the disc $D_n \defeq D(4n, 1)$. It follows by \cite[Theorem 5.3]{classifyingwandering} that there exists a transcendental entire function $f$ having an orbit of bounded,
simply-connected, escaping, wandering domains $(U_n)_{n \in \N}$ such that the following all hold.
\begin{enumerate}[(A)]
\item $\Delta_n' \defeq D(4n, r_n) \subset U_n \subset D(4n,R_n) \defeq \Delta_n,$ where $0 < r_n < 1 < R_n$ and $r_n,R_n \to 1$ as $n\to \infty$.\label{thesets}
\item $f_{n} \defeq T_{n+1}\circ B_n \circ T_n^{-1}$ is analytic on $\overline{\Delta_n}$, and $|f(z) - f_{n}(z)| \to 0$ as $n \to \infty$
uniformly on $\overline{\Delta_n}$; by ``uniformly'' we mean that for each $\epsilon > 0$ there exists $N \in \N$ such that $|f(z) - f_n(z)| < \epsilon$, for $z \in \Delta_n$ and $n \geq N$.\label{convs}
\item $f:U_n \to U_{n+1}$ has the same degree as $B_n$.\label{degree}
\end{enumerate}

This completes the definition of the function $f$. It remains to show that the Fatou components of $f$ have dynamically associated inner functions with the claimed properties. Suppose that $n \in \N$. Let $\phi_n:\mathbb{D} \to U_n$ be the Riemann map such that $\phi_n(0) = 4n$ and $\phi_n'(0) > 0$. Then 
\begin{equation}
\label{eq:g}
g_{n}= \phi_{n+1}^{-1} \circ f \circ \phi_n
\end{equation}
is an inner function dynamically associated to $f|_{U_n}$. 

We need to be able to approximate the Riemann maps in \eqref{eq:g}, and we claim that $\phi_n - T_n \to 0$ locally uniformly on $\D$ as $n \to \infty$. To prove this claim, we first consider translated copies of $U_n$, defined by
\[
U_n' \defeq T_n^{-1}(U_n), \qfor n \in \N.
\]
Note that, for each $n\in\N$, we have that $D(0, r_n) \subset U_n' \subset D(0, R_n)$. Suppose that $w_0 \in \D$. Then there exists a neighbourhood of $w_0$ that is contained in $U_n'$ for all sufficiently large $n \in \mathbb{N}$. Also, suppose that $w \in \partial \D$. Then it follows by \eqref{thesets} that there exists a sequence of points $w_n \in \partial U_n'$ such that $w_n \to w$ as $n \to \infty$. Hence $U_n' \to \D$ in the sense of kernel convergence; see \cite[p.13]{Pommerenke}. Set
\[
\phi_n' \defeq T_n^{-1} \circ \phi_n, \qfor n \in \N.
\]
It then follows from the Carath\'eodory kernel theorem (\cite[Theorem 1.8]{Pommerenke}) that $\phi_n' \to \operatorname{Id}$ locally uniformly in the unit disc as $n \to \infty$, where $\operatorname{Id}(z) \defeq z$. The claim above follows.

Suppose that $B$ is a given Blaschke product, and suppose that $B$ has degree $d$. Let $(n_p)_{p \in \N}$ be a sequence of integers such that the subsequence $(B_{n_p})_{p \in \N}$ converges uniformly to $B$ on $\D$ as $p\rightarrow\infty$. We can assume that each $B_{n_p}$ has degree $d$.

Note that it follows by \eqref{degree} that the degree of each $g_{n_p}$ is equal to $d$. Observe that the theorem requires that the sequence of functions $(g_{n_p})_{p \in \N}$ converges uniformly on $\D$ to the function $B$ as $p \rightarrow \infty$. We shall prove first that this sequence converges locally uniformly to $B$. We will then use Proposition~\ref{prop:conv} to deduce that this convergence is in fact uniform.

To prove locally uniform convergence, suppose that $K \subset \D$ is a given compact set. Choose $r \in (0, 1)$ sufficiently close to $1$ that $K \subset \Delta$ where $\Delta \defeq D(0, r)$. It follows by the claim above that 
\begin{equation}
\label{eq:phi}
\tilde{\epsilon}_n(z) \defeq \phi_n(z) - T_n(z), \qfor z \in \D,
\end{equation}
is analytic in $\D$, and that $\sup_{z \in \Delta} |\tilde{\epsilon}_n(z)| \to 0$ as $n \to \infty$. Similarly
\begin{equation}
\label{eq:phiminus1}
\hat{\epsilon}_{n}(z) \defeq \phi_n^{-1}(z) - T_{n}^{-1}(z), \qfor z \in U_n,
\end{equation}
is analytic in $U_n$, and that $\sup_{z \in T_n(\Delta)} |\hat{\epsilon}_n(z)| \to 0$ as $n \to \infty$. Finally, it follows by \eqref{convs} above that, for all sufficiently large $n\in \N$, we have that
\begin{equation}
\label{eq:f}
\epsilon_{n}(z) \defeq f(z) - f_{n}(z), \qfor z \in \Delta_n,
\end{equation}
is analytic in $\Delta_n$, and such that $\sup_{z \in \phi_n(\Delta)} |\epsilon_n(z)| \to 0$ as $n \to \infty$. 

Note that, by \eqref{convs}, \eqref{eq:phi}, and \eqref{eq:f}, if $n \in \N$ is sufficiently large, then
\begin{align}
f(\phi_n(z))    &= f_n(\phi_n(z)) + \epsilon_n(\phi_n(z)) \nonumber \\
						 	  &= T_{n+1}(B_n(T_n^{-1}(T_n(z) + \tilde{\epsilon}_n(z)))) + \epsilon_n(\phi_n(z)) \nonumber \\
                &= B_n(z + \tilde{\epsilon}_n(z)) + 4(n+1) + \epsilon_n(\phi_n(z)), \qfor z \in K.\label{eq1}
\end{align}
Since the sequence $(B_{n_p})_{p\in\N}$ converges uniformly to $B$ in $\D$, as $p$ tends to infinity,~\eqref{eq1} gives 
   \begin{equation} T_{n_p+1}^{-1}(f(\phi_{n_p}(z))) \to B(z) \label{eqn:eq2} \end{equation}
uniformly for $z\in K$. 
In particular, if $\tilde{\Delta} = D(0,\tilde{r})$ is a disc containing $B(K)$, then $f(\phi_{n_p}(K))\subset T_{n_p}(\tilde{\Delta})$ for sufficiently large $p$.

It then follows, by \eqref{eq:g}, \eqref{eq:phiminus1}, and \eqref{eqn:eq2}, that, for $z \in K$,
\begin{align*}
g_{n_p}(z) =&\ \phi_{n_p+1}^{-1}\bigl(f(\phi_{n_p}(z))\bigr)  \\ 
           =&\ T_{n_p+1}^{-1}\bigl (f(\phi_{n_p}(z))\bigr) + \hat{\epsilon}_{n_p+1}\bigl(f(\phi_{n_p}(z))\bigr) \to B(z)
\end{align*}
as $p\to\infty$.

We have, therefore, established that the subsequence $g_{n_p}$ converges locally uniformly to $B$ in $\D$, as $p$ tends to infinity. Uniform convergence then follows by Proposition~\ref{prop:conv}. This completes the proof of Theorem~\ref{theo:finitecomplicated}. For, if $g_{n_p} \rightarrow B$ uniformly on $\D$, then, given $\epsilon > 0$, we can set $U= U_{n_p}$ and $V = U_{n_p+1}$ for a sufficiently large value of $p$. Note that $U$ and $V$ are then successive wandering domains in the orbit of $U_0$.
\end{proof}

%
%
\newcommand{\etalchar}[1]{$^{#1}$}
\newcommand{\noopsort}[1]{}

\end{document}